\documentclass[11pt,letterpaper]{article}

\usepackage{times}
\selectfont
\usepackage{microtype}

\usepackage{amsmath,amssymb,amsthm,mathtools,bm}
\usepackage{bbm}

\usepackage{enumitem}
\usepackage{booktabs}
\usepackage{graphicx}
\usepackage{xcolor}
\usepackage{array}
\usepackage{tabularx}
\usepackage{float}
\usepackage{caption} 

\usepackage{titlesec}
\usepackage{titling}
\usepackage{titletoc}

\usepackage[most]{tcolorbox}

\usepackage[numbers,sort&compress]{natbib}
\usepackage{hyperref}
\usepackage[capitalize,noabbrev]{cleveref}
\usepackage{fancyhdr}

\definecolor{linkblue}{RGB}{32,79,135}
\hypersetup{
 colorlinks=true,
 hypertexnames=false,
 linkcolor=linkblue,
 citecolor=linkblue,
 urlcolor=linkblue,
 pdftitle={Exact Minimax Quickest Change Detection under Pollak's Criterion},
 pdfauthor={Ali Tajer}
}

\setlist{itemsep=0.25em,topsep=0.35em}

\titleformat{\section}
  {\normalfont\bfseries\large}
  {\thesection}{0.72em}{}
\titlespacing*{\section}{0pt}{1.35em}{0.55em}

\titleformat{\subsection}
  {\normalfont\bfseries\normalsize}
  {\thesubsection}{0.72em}{}
\titlespacing*{\subsection}{0pt}{1.15em}{0.40em}

\titleformat{\subsubsection}
  {\normalfont\bfseries\normalsize}
  {\thesubsubsection}{0.72em}{}
\titlespacing*{\subsubsection}{0pt}{1.0em}{0.35em}

\titlecontents{section}
  [0pt]{\addvspace{0.05em}}
  {\contentslabel{1.8em}}{}
  {\titlerule*[0.55pc]{.}\contentspage}
\titlecontents{subsection}
  [1.8em]{}
  {\contentslabel{2.6em}}{}
  {\titlerule*[0.55pc]{.}\contentspage}

\newtheoremstyle{mimotheorem}
  {0.70em}{0.70em}{\itshape}{0pt}{\bfseries}{.}{0.45em}
  {\thmname{#1}\thmnumber{ #2}\thmnote{ \normalfont(#3)}}
\theoremstyle{mimotheorem}
\newtheorem{theorem}{Theorem}[section]
\newtheorem{lemma}[theorem]{Lemma}
\newtheorem{proposition}[theorem]{Proposition}
\newtheorem{corollary}[theorem]{Corollary}
\newtheorem{definition}[theorem]{Definition}
\newtheorem{assumption}[theorem]{Assumption}
\newtheorem{example}[theorem]{Example}

\newtheoremstyle{mimoremark}
  {0.65em}{0.65em}{\normalfont}{0pt}{\itshape}{.}{0.45em}
  {\thmname{#1}\thmnumber{ #2}\thmnote{ \normalfont(#3)}}
\theoremstyle{mimoremark}
\newtheorem{remark}[theorem]{Remark}

\crefname{theorem}{Theorem}{Theorems}
\Crefname{theorem}{Theorem}{Theorems}
\crefname{lemma}{Lemma}{Lemmas}
\Crefname{lemma}{Lemma}{Lemmas}
\crefname{proposition}{Proposition}{Propositions}
\Crefname{proposition}{Proposition}{Propositions}
\crefname{corollary}{Corollary}{Corollaries}
\Crefname{corollary}{Corollary}{Corollaries}
\crefname{definition}{Definition}{Definitions}
\Crefname{definition}{Definition}{Definitions}
\crefname{assumption}{Assumption}{Assumptions}
\Crefname{assumption}{Assumption}{Assumptions}
\crefname{remark}{Remark}{Remarks}
\Crefname{remark}{Remark}{Remarks}
\crefname{example}{Example}{Examples}
\Crefname{example}{Example}{Examples}

\definecolor{mimoboxborder}{RGB}{91,119,132}
\definecolor{mimoboxback}{RGB}{241,246,248}
\newtcolorbox{resultbox}[1][]{
  enhanced,
  breakable,
  colback=mimoboxback,
  colframe=mimoboxborder,
  boxrule=0.55pt,
  arc=0pt,
  outer arc=0pt,
  left=8pt,right=8pt,top=7pt,bottom=7pt,
  before skip=0.65em,
  after skip=0.65em,
  #1
}

\numberwithin{equation}{section}
\makeatletter
\@removefromreset{equation}{section}

\makeatother

\newcommand{\Pinf}{\mathbb P_{\infty}}
\newcommand{\Einf}{\mathbb E_{\infty}}

\newcommand{\Ezero}{\mathbb E_{0}}
\newcommand{\one}{\mathbf 1}

\newcommand{\dd}{\,\mathrm d}

\newcommand{\Geom}{\operatorname{Geom}}
\newcommand{\Law}{\operatorname{Law}}

\title{\bfseries\LARGE Pollak's Minimax Quickest Change Detection:\\ Non-Asymptotic Optimality}
\author{Ali Tajer\thanks{The author is with the 
Department of Electrical, Computer, and Systems Engineering, Rensselaer Polytechnic Institute.}}
\date{}

\renewenvironment{abstract}
  {\vspace{-1.8em}\begin{center}\small\bfseries Abstract\end{center}\vspace{-0.55em}%
   \begin{list}{}{%
     \setlength{\leftmargin}{0.38in}%
     \setlength{\rightmargin}{0.38in}%
   }\item\relax\small}
  {\end{list}}

\begin{document}
\maketitle

\begin{abstract}

Although Pollak's minimax formulation is one of the central frameworks in quickest change detection (QCD), the strongest general optimality results available for it are predominantly \emph{asymptotic}, applying as the average run-length-to-false-alarm constraint, $\gamma$, tends to infinity. Exact results for finite $\gamma$ have previously been available only for special models or restricted regimes. This paper addresses the general finite-$\gamma$ problem over the complete class of randomized, history-dependent stopping rules for i.i.d. change models. The key technical development is a \emph{survival-process representation} that recasts the optimization of Pollak's minimax criterion over stopping times as an equivalent linear variational optimization over admissible survival processes. Although this formulation is infinite-dimensional, it establishes the existence of an optimizer and provides an exact characterization of the finite-$\gamma$ Pollak minimax value. This characterization, in turn, provides a principled basis for constructing computable stopping rules whose performance can be made arbitrarily close to the optimum.  The resulting rules are driven by a recursively updated weighted likelihood-ratio statistic with generally time-varying injections and boundaries. Importantly, this structure is not imposed a priori: the optimization is carried out over the full class of randomized, history-dependent stopping rules, and the Shiryaev--Roberts form emerges naturally from the solution. In particular, the classical Shiryaev--Roberts recursion arises as the time-homogeneous special case. Finally, under a likelihood-ratio floor condition, the framework yields closed-form exact minimax solutions for a nontrivial class of change models.
\end{abstract}

\section{Introduction}
\label{sec:introduction}

Quickest change detection (QCD) is concerned with detecting, as quickly as possible, a change in the statistical distribution of a sequence of observations, subject to a prescribed level of protection against false alarms. Two principal formulations have shaped the development of QCD. In the Bayesian formulation, the change point is assigned a prior distribution and the detection rule is designed to minimize a Bayesian risk that balances detection delay and false alarms \cite{girshick1952,shiryaev1961,shiryaev1963}. In the minimax formulation, no prior is imposed on the change point, and performance is instead guaranteed against its least favorable location. The two canonical minimax criteria are due to Lorden \cite{lorden1971} and Pollak \cite{pollak1985}. Lorden's formulation further protects against the least favorable pre-change observation history, whereas Pollak's formulation measures the conditional average delay given survival to the change and takes the worst case only over the change time. 

The exact minimax problem under Lorden's criterion was resolved by Moustakides, who established the optimality of the cumulative sum (CuSum) procedure \cite{moustakides1986}; in contrast, while Pollak's formulation has motivated an extensive literature and increasingly sharp asymptotic optimality guarantees \cite{pollak1985,polunchenkoTartakovsky2012,tartakovskyEtAl2012}, its exact finite-false-alarm minimax solution has remained unresolved in general.

\vspace{.05 in}
\noindent \textbf{Existing literature.} In his initial work, Pollak proposed the randomized Shiryaev--Roberts--Pollak (SRP) procedure by \emph{randomly} initializing the Shiryaev--Roberts (SR) statistic from its quasi-stationary distribution below the detection threshold. This initialization makes the procedure an equalizer, in the sense that its conditional expected detection delay is the same for every change time. Pollak further showed that, under proper regularity conditions,  the additive gap between the SRP delay and the minimax value is $o(1)$ in the limit as the false-alarm level $\gamma\to\infty$ \cite{pollak1985}. The quasi-stationary construction and its convergence properties were subsequently studied further in \cite{pollakSiegmund1986}. In related work, \cite{pollak1987} characterized large-threshold asymptotic expressions for false-alarm average run lengths and immediate-change detection delays of SR-type and mixture procedures in one-parameter exponential families. Early numerical and finite-sample comparisons of procedures under Pollak-type criteria were developed in, among others, \cite{mevorachPollak1991} and \cite{srivastavaWu1993}.

A separate line of work established exact optimality properties of SR procedures under specific criteria that were related to, but distinct from, Pollak's criterion. In particular, \cite{moustakides2008} and \cite{siegmundYakir2008} developed generalized-Bayes and minimax arguments that clarified the role of SR-type procedures, and \cite{pollakTartakovsky2009} proved exact optimality of the classical SR procedure for an integral average-delay criterion and for the stationary multi-cyclic regime in which a distant change is preceded by repeated false-alarm cycles. These results helped explain the strong late-change performance of SR procedures, but they did not resolve Pollak's original finite-$\gamma$ minimax problem.

An important finite-$\gamma$ advance was made by \cite{polunchenkoTartakovsky2010} who characterized a general lower bound based on an auxiliary integral criterion optimized by a \emph{head-started} SR procedure (a variant of the SR procedure initialized at a positive deterministic value rather than at zero) and showed that, for the exponential models (both pre- and post-change distributions being exponential) a suitably chosen deterministic head start makes this lower bound coincide with Pollak's, yielding minimaxity over a restricted false-alarm range.

The generalized SR family was studied in greater detail by \cite{moustakidesEtAl2011} to evaluate the classical SR procedure, deterministically head-started SR procedures, the randomized SRP procedure, the CuSum procedure, and exponentially weighted moving average schemes. Their numerical results showed that an appropriately selected deterministic head start can improve upon the SRP procedure and motivated the conjecture that the exact Pollak optimizer may require a more general time-inhomogeneous SR construction. Building on these developments, \cite{tartakovskyEtAl2012} established third-order asymptotic optimality both for the randomized SRP procedure and for suitably designed deterministically head-started generalized SR procedures. 

Therefore, despite the extensive development of asymptotically optimal and model-specific procedures, the following basic finite-$\gamma$ question has
remained unresolved in general: \textbf{for a prescribed finite false-alarm level
$\gamma$, what is Pollak's minimax risk over the complete class of randomized, history-dependent stopping rules?} Closely related questions are whether this infimum is achieved, what structure an unrestricted optimizer must possess, and how the resulting finite-$\gamma$ benchmark can be computed or approximated. These are the questions addressed in this paper.

\noindent \textbf{Contributions.}
The contributions of this paper can be organized into four levels.

\emph{1. Exact finite-$\gamma$ characterization.} We obtain an exact solution of Pollak's minimax problem for every finite false-alarm level $\gamma>1$ over the complete class of randomized, history-dependent stopping rules. The central idea is to replace stopping times by their \emph{survival processes} and to reverse the usual minimax optimization: for a prescribed candidate delay $r$, we maximize the
achievable average run length subject to requiring every change-time delay to be at most $r$. Fixing $r$ removes the fractional structure of Pollak's criterion and produces a linear variational problem. The resulting optimal
risk--false-alarm tradeoff defines the \textbf{Pollak variational frontier} (PVF). We establish the  existence of a frontier optimizer and prove that Pollak's minimax risk is exactly the first risk level at which the PVF reaches
$\gamma$.

\emph{2. Structural characterization of optimal rules.} In addition to characterizing the optimal risk, the variational formulation also reveals the structure of the optimal decision rule. In particular, the unrestricted optimum can be approximated to arbitrary accuracy by rules whose off-boundary decisions admit the scalar recursion
\begin{align}
M_n = \omega_n+\Lambda_n M_{n-1}
\; \qquad \forall n\in\mathbb{N}\ ,
\end{align}
where $\Lambda_n$ is the one-observation likelihood ratio and
$\{\omega_n:n\in\mathbb{N}\}$ are time-dependent coefficients determined by the optimization, together with generally \emph{time-dependent stopping boundaries}.
Thus, although the original optimization permits arbitrary dependence on the entire observation history, the resulting optimal decision admits a one-dimensional recursive representation. The classical SR recursion arises as the special time-homogeneous case in which the coefficients $\{\omega_n:n\in\mathbb{N}\}$ and stopping boundaries are constant. This connection provides a structural explanation for the effectiveness of SR-type
procedures while also identifying the additional degrees of freedom
available to an unrestricted truncated optimizer.

\emph{3. Constructive finite-dimensional approximation.}
The exact PVF characterization, which underpins the characterization of minimax risks and rule structures, relies on solving an infinite-dimensional optimization problem. To address the computational aspects, we
construct truncated frontiers, which can be converted into equivalent finite-dimensional problems whose crossing values decrease monotonically to the exact Pollak value as truncation order increases.  Furthermore, each finite-dimensional optimizer can be converted into an admissible stopping rule whose Pollak risk converges to the unrestricted
optimum. Thus, the exact non-asymptotic solution can be approached arbitrarily closely
through finite-dimensional optimization, with explicit control of the truncation error. 

\emph{4. Explicit exact solutions.} We identify distributional conditions under which the unrestricted Pollak problem admits an explicit closed-form solution. This yields explicit exact minimizers for a nontrivial class of change models, including a range of Bernoulli models. In one such model, we further prove that the unrestricted optimum is strictly smaller than the best value achievable within the canonical constant-boundary generalized SR class. Hence, the
additional degrees of freedom exposed by the variational formulation can be essential for exact optimality at finite $\gamma$, rather than merely an
artifact of the general formulation.

The remainder of the paper is organized as follows.
Section~\ref{sec:model} introduces the statistical model, randomized stopping rules, and Pollak's minimax criterion.
Section~\ref{sec:survival} develops the survival-process representation.
Section~\ref{sec:fixed-risk} formulates the fixed-risk variational problem and establishes the exact finite-$\gamma$ minimax characterization and existence of an optimal rule.
Section~\ref{sec:truncated} develops truncated approximations with explicit error bounds and near-optimal admissible rules.
Section~\ref{sec:finite-dual} characterizes the structure of truncated optimizers through a recursively updated weighted likelihood-ratio statistic, with the SR recursion arising as a special case.
Section~\ref{sec:likelihood-floor} derives explicit all-rules lower bounds and exact solutions under a likelihood-ratio floor, including the Bernoulli specialization.
Section~\ref{sec:implementation} develops the Pollak variational frontier algorithm for computing truncated solutions and implementing the resulting detectors.
Finally, Section~\ref{sec:numerical} presents exact and numerical comparisons with classical SR-type and CuSum procedures.

\newpage 
\section{Model and Notations}
\label{sec:model}

Let $(\mathsf X,\mathcal X)$ be a measurable observation space. The pre-change and post-change laws are $P_\infty$ and $P_0$, with $P_0\ll P_\infty$. We observe $\{X_n:n\in\mathbb{N}\}$ sequentially. For an unknown deterministic $\nu\in\mathbb N_0$, under $\mathbb P_\nu$, we have
\begin{align}
X_1,\ldots,X_\nu
\stackrel{\mathrm{i.i.d.}}{\sim}P_\infty
\qquad \mbox{and} \qquad
X_{\nu+1},X_{\nu+2},\ldots
\stackrel{\mathrm{i.i.d.}}{\sim}P_0
\; ,
\label{eq:model}
\end{align}
and all observations are independent.
Throughout the paper, $P_\infty$ and $P_0$ denote the one-observation laws, $P_\infty^n$ and $P_0^n$ their finite product laws, $\mathbb P_\nu$ the full path law for a deterministic change after time $\nu$. Under $\mathbb P_\infty$, every observation has law $P_\infty$. The symbols $\mathbb E_\nu$ and $\mathbb E_\infty$ refer to expectations under the corresponding path laws. Pre-change and post-change laws are assumed to be mutually absolutely continuous.

Randomized procedures play an important role in Pollak's formulation (e.g., randomized initialization in SRP). Hence, we allow the detector to use auxiliary randomness in addition to the observations. Specifically, at each time, the detector may draw an independent random variable and use it with the observed data to decide whether to stop or continue. These auxiliary variables are independent of the observation process and affect only the detector's decisions, not the statistical model generating the data. Let $\{U_n:n\in\mathbb{N}_0\}$ be independent $\operatorname{Unif}(0,1)$ variables, independent of the observations. We denote the product probability law governing this sequence of auxiliary variables by $\mathbb P_U$. Accordingly we define 
and let
\begin{align}
\mathcal F_n
:=  
\sigma(U_0,X_1,U_1,\ldots,X_n,U_n) \; .
\label{eq:behavioral-filtration}
\end{align}
A randomized detection rule $T$ is a stopping time with respect to the resulting filtration $\{\mathcal F_n:n\in\mathbb{N}\cup\{0\} \}$. This class of detection rules, denoted by $\mathcal T$, includes non-randomized rules, randomized initialization, and a fresh randomized action at each time. Next, define
\begin{align}
\label{eq:likelihoods1} \Lambda_n  := 
\frac{\dd P_0}{\dd P_\infty}(X_n), \quad \forall n\in\mathbb{N} \; , \qquad \mbox{and} \qquad L_{\nu,n}
:= 
\prod_{j=\nu+1}^{n}\Lambda_j,
\quad \forall n>\nu \; ,
\end{align}
with $L_{\nu,\nu}=1$. For a stopping time $T$, define
\begin{align}
q_n(T)
:= 
\Pinf(T>n) \; .
\label{eq:q}
\end{align}
\begin{definition}[Pollak risk]
For every finite change time $\nu$, define
\begin{align}
\label{eq:A}
A_\nu(T)
&:=
\mathbb E_\nu[(T-\nu)\one_{\{T>\nu\}}]
=
\sum_{n=\nu}^{\infty}\mathbb P_\nu(T>n)
\; .
\end{align}
Whenever $q_\nu(T)>0$, the conditional detection delay associated with a change at time $\nu$ is
\begin{align}
\label{eq:D}
D_\nu(T)
&:=
\mathbb E_\nu[T-\nu\mid T>\nu]
=
\frac{A_\nu(T)}{q_\nu(T)}
\; .
\end{align}
The \emph{Pollak risk} of the stopping rule $T$ is defined as
\begin{align}
J_{\sf P}(T)
&:=
\sup_{\nu\ge0}D_\nu(T)
\; .
\end{align}
If $q_\nu(T)=0$ for any finite $\nu$, we set $J_{\sf P}(T)=\infty$. Finally, for a prescribed average run-length (ARL) $\gamma>1$, the optimal Pollak minimax value is
\begin{align}
V_{\sf P}(\gamma)
&:=
\inf_{T:\,\Einf [T]\ge\gamma}J_{\sf P}(T)
\; .
\label{eq:value}
\end{align}
\end{definition}
\section{Survival-Process Representation}
\label{sec:survival}

The direct optimization of Pollak's criterion over stopping times is difficult for two related reasons. First, a stopping rule is a history-dependent sequential object: at each time, the decision to stop or continue may depend on the entire observation history and any auxiliary randomization used by the detector. Second, Pollak's criterion depends not only on when the procedure stops, but also on the probability that it \emph{survives} without stopping up to each possible change time. 

This inspires describing a detection rule in terms of its survival behavior rather than its stopping time directly. The key observation in this section is that, once the observation history is fixed, all remaining randomness in the stopping decision comes from the detector's auxiliary randomization. Averaging over this randomization yields a conditional survival probability for each observation history. As we will show below, conditional survival probabilities completely characterize every quantity that enters Pollak's criterion. To formalize this representation, corresponding to any observation history $x_{1:n}:= (x_1,\ldots,x_n)$ 
we define the \emph{survival process} induced by a stopping rule $T$ as follows.

\begin{definition}[Survival process] For every $n\geq 1$ and given $x_{1:n}$ we define the survival process $\{y_n(x_{1:n}): n\in\mathbb N\}$ as
\begin{align}
y_n(x_{1:n})
:= \mathbb P_U
\bigl( T>n \mid X_{1:n}=x_{1:n} \bigr)\ , \; \qquad \forall n\in\mathbb{N}\ ,
\label{eq:survival-process}
\end{align}
where the conditional probability averages only over the detector's auxiliary randomization. We also set $y_0:=1$.  Thus, $y_n(x_{1:n})$ is the probability that the detector continues beyond time $n$ when the observations up to that time are fixed at $x_{1:n}$.
\end{definition}
The distinction between the conditional survival probability $y_n$ and ordinary survival probability $q_n(T)$ defined in~\eqref{eq:q} is important. The quantity $y_n(x_{1:n})$ describes survival \emph{conditional on a particular observation history}; it therefore retains the full history dependence of the detection rule. By contrast, averaging $y_n$ over the observation law produces the \emph{unconditional} survival probability. In particular, under the no-change law, we have  $q_n(T) = \mathbb E_\infty
\bigl[y_n(X_{1:n})\bigr]$.
More generally, under a change at time $\nu$,
\begin{align}
\mathbb P_\nu(T>n)
=
\mathbb E_\nu
\bigl[
y_n(X_{1:n})
\bigr]
\; .
\label{eq:survival-under-nu}
\end{align}
The survival process $\{y_n:n\in\mathbb{N}_0\}$ must satisfy a simple consistency condition across time. If the detector survives beyond time $n$, then it must also have survived beyond time $n-1$. Therefore, for every observation history $x_{1:n}$,
\begin{align}
0
\le
y_n(x_{1:n})
\le
y_{n-1}(x_{1:n-1})
\le
1 \; ,
\qquad
\forall n\ge 1
\; ,
\label{eq:survival-compatibility}
\end{align}
i.e., along a fixed observation path, the probability of remaining only decreases with time.

\vspace{.05 in} \noindent \textbf{Alternative representation of Pollak's Problem.} The importance of the survival process is that all quantities involved in Pollak's problem can now be expressed directly in terms of $\{y_n:n\in\mathbb{N}_0\}$. Specifically, from \eqref{eq:A} and \eqref{eq:survival-under-nu} we have 
\begin{align}
A_\nu(T)
=
\sum_{n=\nu}^{\infty}
\mathbb P_\nu(T>n)
=\sum_{n=\nu}^{\infty}
\mathbb E_\nu
\bigl[
y_n(X_{1:n})
\bigr]
\; ,
\label{eq:A-survival}
\end{align}
which, in conjunction with~\eqref
{eq:D} provides that whenever $q_\nu(T)>0$ we have
\begin{align}
D_\nu(T)
=\frac{1}{\displaystyle
\mathbb E_\infty \bigl[ y_\nu(X_{1:\nu})
\bigr] }\; \displaystyle
\sum_{n=\nu}^{\infty}
\mathbb E_\nu \bigl[y_n(X_{1:n})
\bigr]\; .
\label{eq:D-survival}
\end{align}
Similarly, the ARL under no change is
\begin{align}
\Einf [T]
=
\sum_{n=0}^{\infty}
q_n(T)
=
\sum_{n=0}^{\infty}
\Einf
\bigl[
y_n(X_{1:n})
\bigr]
\; .
\label{eq:arl-survival}
\end{align}
Next, we define the class of admissible survival processes. To explicitly emphasize the dependence of $A_v(T)$, $D_v(T)$, and $T$ on $\mathbf{y}=\{y_n:n\in\mathbb N_0\}\in\mathcal Y$, throughout the rest of the paper we also interchangeably use the notations $A_v(\mathbf{y})$, $D_v(\mathbf{y})$, and $T_{\mathbf{y}}$. 
\begin{definition}[Admissible survival processes]
Let $\mathcal Y$ denote the collection of measurable sequences $\{y_n:n\in\mathbb{N}_0\}$ such that
\begin{align}
y_0=1
\qquad \mbox{and} \qquad
0
\le
y_n(x_{1:n})
\le
y_{n-1}(x_{1:n-1})
\le
1 \; ,
\qquad n\ge1
\; .
\end{align}
For a prescribed false-alarm level $\gamma>1$, define
\begin{align}
\mathcal Y_\gamma
:=
\Big\{
y\in\mathcal Y:
\sum_{n=0}^{\infty}
\Einf\!\Big[y_n(X_{1:n})\Big]
\ge
\gamma
\Big\}
\; .
\label{eq:Y-gamma}
\end{align}
\end{definition}
With the notations above, Pollak's minimax problem can be written entirely in terms of the survival process as
\begin{align}
V_{\sf P}(\gamma)
=
\inf_{y\in\mathcal Y_\gamma}
\left\{
\sup_{\nu\ge 0} \; 
\frac{1}{ \displaystyle
\mathbb E_\infty
\bigl[ y_\nu(X_{1:\nu})
\bigr] }\; \displaystyle
\sum_{n=\nu}^{\infty}
\mathbb E_\nu
\left[ y_n(X_{1:n}) \right]
\right\}
\; ,
\label{eq:value-survival}
\end{align}
i.e., after passing from $T$ to its associated survival process, both the false-alarm constraint and the numerator and denominator of every conditional-delay criterion become sums of expectations of the functions $y_n$.  Next, we show that the survival process is a complete representation of Pollak's minimax problem. Specifically, every randomized stopping rule generates a compatible survival process, and conversely every measurable sequence satisfying \eqref{eq:survival-compatibility} can be implemented by a randomized stopping rule. These are formalized in the following lemma.
\begin{lemma}[Completeness of the survival representation]
\label{lemma:survival-completeness}
Let $\mathbf{y}\in\mathcal Y$ be an admissible survival process. Then there exists a randomized stopping time $T_{\mathbf{y}}$ such that, for every $n\ge0$ and every observation history $x_{1:n}$,
\begin{align}
\mathbb P_U
\bigl(
T_{\mathbf{y}}>n
\mid X_{1:n}=x_{1:n}
\bigr) = y_n(x_{1:n})
\; .
\end{align}
Conversely, every randomized stopping time induces an admissible survival process ${\mathbf{y}}\in\mathcal Y$. 
\end{lemma}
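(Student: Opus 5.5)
For the direct part, I will build $T_{\mathbf{y}}$ from the auxiliary uniforms by stopping sequentially with the conditional hazard implied by $\mathbf{y}$. Set $y_0=1$. For $n\ge1$ and each history $x_{1:n}$, define the conditional continuation probability
\begin{align}
h_n(x_{1:n}) :=
\begin{cases}
\dfrac{y_n(x_{1:n})}{y_{n-1}(x_{1:n-1})}, & y_{n-1}(x_{1:n-1})>0,\\[1ex]
0, & y_{n-1}(x_{1:n-1})=0 .
\end{cases}
\end{align}
This is measurable, and by the compatibility condition \eqref{eq:survival-compatibility} it takes values in $[0,1]$.

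The rule is then: never stop at time $0$ (consistent with $y_0=1$), and for $n\ge1$ stop at $n$ if not yet stopped and $U_n> h_n(X_{1:n})$. Formally,
\[
T_{\mathbf{y}}:=\inf\{n\ge1: U_n>h_n(X_{1:n})\}.
\]
The event $\{T_{\mathbf{y}}\le n\}$ is determined by $X_{1:n}$ and $U_{1:n}$, so $T_{\mathbf{y}}$ is an $\{\mathcal F_n\}$-stopping time and belongs to $\mathcal T$.

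For the survival identity, fix $x_{1:n}$. By independence of the $U_j$ from each other and from the observations, the conditional law of $U_{1:n}$ given $X_{1:n}=x_{1:n}$ is the product uniform law. Hence
\[
\mathbb P_U(T_{\mathbf{y}}>n\mid X_{1:n}=x_{1:n})=\prod_{j=1}^n h_j(x_{1:j}).
\]
I then show by induction on $n$ that this product equals $y_n(x_{1:n})$. The step is $y_{n-1}\,h_n=y_n$. When $y_{n-1}>0$ this is immediate from the definition of $h_n$. When $y_{n-1}=0$, both sides vanish, since $0\le y_n\le y_{n-1}=0$.

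The point needing care, though I don't expect it to be hard, is that the survival event up to $n$ must not depend on future observations. It doesn't: $\{T_{\mathbf{y}}>n\}$ depends only on $X_{1:n}$ and $U_{1:n}$, so conditioning on $X_{1:n}$ alone is legitimate. Conditioning can also be read pathwise, as integration over $U$ with $x$ fixed, because the observations and the randomization are independent. I will also note, as a side remark, that $U_0$ is unused, or could serve as an initial randomization.

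For the converse, take $T\in\mathcal T$. Since $\{T>n\}\in\mathcal F_n$, its indicator is $g_n(U_0,X_1,U_1,\dots,X_n,U_n)$ for some measurable $g_n$ (Doob–Dynkin). I then set
\[
y_n(x_{1:n}):=\int g_n(u_0,x_1,u_1,\dots,x_n,u_n)\,du_{0:n}.
\]
By Fubini this is measurable, and by independence it is a version of $\mathbb P_U(T>n\mid X_{1:n}=x_{1:n})$. We have $y_0=\mathbb P(T>0)$. Here I will use the paper's convention that detection rules do not stop at time $0$ (implicit in $y_0:=1$): either restrict to $T\ge1$, or note that $\mathbb P(T=0)>0$ forces $q_0<1$ and such rules can be treated separately. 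Finally, since $\{T>n\}\subseteq\{T>n-1\}$, we get $g_n\le g_{n-1}$ pointwise. Integrating out $u_{0:n}$ gives $0\le y_n\le y_{n-1}\le 1$, so $\mathbf{y}\in\mathcal Y$.
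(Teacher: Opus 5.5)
Your proposal is correct and follows the paper's proof. The forward direction uses the same hazard ratio $y_n/y_{n-1}$ with the zero convention, the rule continues iff $U_n$ falls below it, and the survival identity is verified by the same telescoping (your induction step $y_{n-1}h_n=y_n$ is the same argument, stated a little more cleanly). The converse likewise rests on the nesting $\{T>n\}\subseteq\{T>n-1\}$.

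Your Doob--Dynkin/Fubini construction of $y_n$ is a slightly more explicit substitute for the paper's appeal to measurable versions of conditional probabilities. It gives one version valid under every $\mathbb P_\nu$. Note that $g_n\le g_{n-1}$ holds only almost surely as written, or pointwise after replacing $g_n$ by $g_n\wedge g_{n-1}$. Your caveat that $y_0=1$ requires $T\ge1$ matches the convention the paper uses implicitly.
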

\begin{proof}
See Appendix~\ref{app:survival-completeness}.
\end{proof}
This lemma implies that optimization over randomized stopping rules is equivalent to optimization over the class $\mathcal Y$ of admissible survival processes. More specifically, it establishes that replacing stopping times by survival processes entails no loss of generality. Two randomized stopping rules that induce the same collection $\{y_n:n\in\mathbb{N}_0\}$ have identical survival probabilities under every change time and therefore have the same average run length, the same conditional delays, and the same Pollak risk. Accordingly, to solve Pollak's minimax problem, the survival process provides a complete representation of the detection rule. This reformulation allows us to replace optimization over a class of randomized stopping times with optimization over measurable functions that satisfy the simple order constraints in \eqref{eq:survival-compatibility}.

\section{Fixed-Risk Reformulation}
\label{sec:fixed-risk}

The survival-process representation enables replacing the false-alarm mean as well as the numerator and denominator of every Pollak delay with a linear function of $\mathbf y$.  The problem, nevertheless, remains nonlinear because the risk for every change time $\nu$ is a ratio, and optimization is over the worst-case of these ratios:
\begin{align}
D_\nu(\mathbf y)
=
\frac{A_\nu(\mathbf y)}{q_\nu(\mathbf y)}
\; .
\label{eq:pollak-ratio-y}
\end{align}
In this section, we avoid directly minimizing over these ratios. Instead, we first fix a candidate risk level $r$ and ask how much false-alarm protection can be achieved while requiring every conditional detection delay to be at most $r$. For fixed $r$, each ratio constraint becomes a linear inequality. This produces a scalar frontier that describes the trade-off between the allowable detection delay and the largest achievable ARL. We then recover the original minimax problem by identifying the first point at which this frontier reaches the prescribed ARL level $\gamma$. Using the expansions of $q_n(T)$, $A_\nu(T)$, $D_\nu(T)$, and $\Einf[T]$, for an admissible survival process $\mathbf y\in\mathcal Y$, we introduce an \emph{infinite-dimensional} linear program as follows.

\subsection{Fixed-Risk ARL Frontier}

Fix a candidate risk level $r\ge1$. Whenever $q_\nu(\mathbf y)>0$, the requirement $D_\nu(\mathbf y) \le r$ is equivalent to $A_\nu(\mathbf y)
\le r q_\nu(\mathbf y)$, which indicates that for a fixed $r$, each conditional-delay constraint $D_\nu(\mathbf y) \le r$
becomes a linear inequality in the survival process $\{y_n:n\in\mathbb{N}_0\}$. We refer to $A_\nu(\mathbf y)
\le r q_\nu(\mathbf y)$ as the \emph{fixed-rate constraint for $\nu$}. This is a basis for specifying an infinite-dimensional linear problem. Before formalizing this problem, we note one technical issue pertinent to $q_\nu=0$ at a finite $\nu$. The original Pollak problem assigns infinite risk to a stopping rule for which $q_\nu=0$ at any finite $\nu$.  To obtain a solution to the fixed-risk optimization, however, it is helpful to \emph{temporarily} permit survival probabilities to vanish at a finite time. The role of this relaxation and its subsequent repair will be discussed in Section~\ref{sec:tail-repair}. If $q_\nu(\mathbf y)=0$, then the non-negativity of $y_\nu$ implies that $y_\nu=0$ almost surely.  The survival consistency condition in~\eqref{eq:survival-compatibility} then forces all subsequent survival functions to vanish along the corresponding histories, which in turn also implies $A_\nu(\mathbf y)=0$. Hence the corresponding fixed-risk constraint becomes the valid equality $A_\nu(\mathbf y) = r q_\nu(\mathbf y) = 0$.

Including $q_\nu(\mathbf y)=0$ therefore enlarges the original feasible class of finite-risk Pollak rules. We refer to this enlarged feasible class as the \emph{extended} fixed-risk class.
We denote this \emph{extended} feasible set by
\begin{align}
\mathcal F_{\le r}
:=
\left\{
\mathbf y\in\mathcal Y:
A_\nu(\mathbf y)
\le
r q_\nu(\mathbf y) \; ,
\quad
\forall\,\nu\in\mathbb N_0
\right\}
\; ,
\label{eq:fixed-risk-feasible-set}
\end{align}
in which $\mathbf{y}$ must satisfy an infinite number of linear constraints.  Subsequently, for a given $r\geq 1$, we define the \emph{fixed-risk} ARL frontier as follows.
\begin{definition}[Pollak variational frontier]
\label{def:Gamma}
For $r\ge1$, define the PVF as
\begin{align}
\Gamma(r)
:=
\sup_{\mathbf y\in\mathcal F_{\le r}}
\Einf[T]
=
\sup_{\mathbf y\in\mathcal F_{\le r}}
\sum_{n=0}^{\infty}q_n(\mathbf y)
\; .
\label{eq:Gamma}
\end{align}
\end{definition}
Therefore, $\Gamma(r)$ is the largest ARL that can be supported while requiring the conditional detection delay associated with every change time to be no larger than $r$. For fixed $r$, this is an infinite-dimensional linear program: the objective, the survival-process constraints defining $\mathcal Y$, and all constraints defining $\mathcal F_r$ are linear in $\mathbf y$; the ratio appearing in Pollak's conditional delay has disappeared; and each change time contributes one linear inequality in $\mathbf y$. Hence the original collection of infinitely many linear-fractional constraints is replaced by a countable collection of linear constraints.

\subsection{Frontier Crossing and the Minimax Lower Bound}

The frontier $\Gamma(r)$ converts the original minimax problem into a scalar tradeoff between risk and false-alarm protection. We next identify the risk level at which this frontier first supports the prescribed ARL requirement. In this subsection, we characterize the first risk level at which the required false-alarm mean $\gamma$ becomes feasible and show that this crossing point provides a universal lower bound on the Pollak minimax value. This is done by noting that the frontier $\Gamma(r)$ has a basic monotonicity property. Specifically, if $r_1\le r_2$, every process satisfying the more stringent risk bound $r_1$ also satisfies the bound $r_2$. Hence  $\mathcal F_{\le r_1}
\subseteq \mathcal F_{\le r_2}$ and consequently $\Gamma(r_1) \le \Gamma(r_2)$. Thus, increasing the allowable detection delay can only increase the maximum false-alarm protection that can be supported.

This monotonicity suggests a simple characterization of the original minimax problem. For a prescribed ARL requirement $\gamma>1$, the Pollak minimax value is recovered from the first crossing of the PVF with the prescribed ARL level $\gamma$, motivated by which we define
\begin{align}
r_\gamma
:=
\inf
\left\{
r\in[1,\gamma]:
\Gamma(r)\ge\gamma
\right\}
\; ,
\label{eq:r-gamma-preview}
\end{align}
based on which, $r_\gamma$ is the first risk level at which an ARL of at least $\gamma$ becomes feasible. Note that the search for $r$ can be restricted to the interval $[1,\gamma]$. The lower endpoint follows because, conditional on survival to the change time, the integer-valued residual delay is at least one. The upper endpoint is sufficient because an observation-independent geometric stopping rule with mean $\gamma$ satisfies $\Einf[T]= \gamma$ and $D_\nu(T) = \gamma$ for all $\nu\in\mathbb N_0$, based on which $\Gamma(\gamma) \ge \gamma$. This shows that the set in \eqref{eq:r-gamma-preview} is nonempty.

Noting that $r_\gamma$ is well defined, the central purpose of this reformulation is to show that this crossing point is exactly the solution of the original Pollak minimax problem, namely, $V_{\sf P}(\gamma)
= r_\gamma$. This identity reduces the original minimization over all randomized, history-dependent stopping rules to a one-dimensional search over the risk level $r$. The remainder of the analysis is devoted to establishing two inequalities: one inequality is $V_{\sf P}(\gamma)\ge r_\gamma$, which follows directly from the definition of the frontier, and the second one is its reverse inequality, i.e., $V_{\sf P}(\gamma)\le r_\gamma$, establishing which requires additional arguments concerning the construction of a valid stopping rule from a frontier optimizer. 

The key to showing these inequalities is to distinguish what the frontier $\Gamma(r)$ tells us from what is ultimately required to characterize $V_{\sf P}(\gamma)$. Recall that $V_{\sf P}(\gamma)$ asks: \emph{among genuine Pollak-admissible stopping rules whose ARL is at least $\gamma$, what is the smallest achievable Pollak risk?} By contrast, $\Gamma(r)$ asks the reverse question: \emph{if we require the Pollak risk to be no larger than $r$, what is the largest ARL that can be supported?} This distinction makes the inequality $V_{\sf P}(\gamma)\ge r_\gamma$ immediate: if  $\Gamma(r)<\gamma$ then even after optimizing over the entire fixed-risk class, risk level $r$ cannot support an ARL of $\gamma$. Therefore, no genuine stopping rule with ARL at least $\gamma$ can have Pollak risk at most $r$. Since this is true for every $r<r_\gamma$, we obtain $V_{\sf P}(\gamma) \ge r_\gamma$. This is formalized in the next lemma. 
\begin{lemma}[Frontier lower bound]
\label{lem:frontier-lower-bound}
For every $\gamma>1$, we have $V_{\sf P}(\gamma)
\ge r_\gamma$.
\end{lemma}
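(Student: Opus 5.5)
The argument is by contradiction: any rule beating $r_\gamma$ would place its survival process in $\mathcal F_{\le r}$ for some $r<r_\gamma$, forcing $\Gamma(r)\ge\gamma$. First we dispose of the trivial cases. If $V_{\sf P}(\gamma)=\infty$ there is nothing to prove. Since $r_\gamma\le\gamma<\infty$ (the geometric rule discussed after \eqref{eq:r-gamma-preview}), we may assume $V_{\sf P}(\gamma)<\infty$. Now suppose, for contradiction, that $V_{\sf P}(\gamma)<r_\gamma$. By the definition of the infimum in \eqref{eq:value}, there is a stopping rule $T\in\mathcal T$ with $\Einf[T]\ge\gamma$ and $J_{\sf P}(T)<r_\gamma$.

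Fix any $r$ with $J_{\sf P}(T)\le r<r_\gamma$. Because $J_{\sf P}(T)<\infty$, the convention in the definition of the Pollak risk gives $q_\nu(T)>0$ for every finite $\nu$. Hence $D_\nu(T)=A_\nu(T)/q_\nu(T)$ is well defined and at most $r$, which is equivalent to $A_\nu(T)\le r\,q_\nu(T)$ for all $\nu\in\mathbb N_0$. By the converse part of Lemma~\ref{lemma:survival-completeness}, $T$ induces an admissible survival process $\mathbf y\in\mathcal Y$. Via \eqref{eq:survival-under-nu}, \eqref{eq:A-survival} and \eqref{eq:arl-survival}, this process satisfies $q_\nu(\mathbf y)=q_\nu(T)$, $A_\nu(\mathbf y)=A_\nu(T)$ and $\sum_n q_n(\mathbf y)=\Einf[T]$. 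Therefore $\mathbf y\in\mathcal F_{\le r}$, and Definition~\ref{def:Gamma} yields $\Gamma(r)\ge\Einf[T]\ge\gamma$.

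It remains to contradict the definition of $r_\gamma$. We need $r\ge1$ so that $r\in[1,\gamma]$. This holds since $D_\nu(T)\ge1$: conditional on $T>\nu$ the delay $T-\nu$ is a positive integer, so $J_{\sf P}(T)\ge1$ and $r\ge J_{\sf P}(T)\ge 1$. Also $r<r_\gamma\le\gamma$. Thus $r$ belongs to the set in \eqref{eq:r-gamma-preview}, giving $r_\gamma\le r<r_\gamma$, a contradiction. Hence $V_{\sf P}(\gamma)\ge r_\gamma$.

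The only delicate point is checking that the survival-process quantities match those of $T$. This means confirming that conditioning on the observations and averaging over $U$ reproduces $\mathbb P_\nu(T>n)$ under every $\mathbb P_\nu$, which holds because $U$ is independent of $X$ under each $\mathbb P_\nu$. This is exactly what the earlier representation formulas assert, so no real obstacle is expected.
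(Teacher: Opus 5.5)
Your proof is correct and follows the paper's argument essentially verbatim. Both take a rule with $\Einf[T]\ge\gamma$ and $J_{\sf P}(T)<r_\gamma$ and use Lemma~\ref{lemma:survival-completeness} to turn it into a survival process in $\mathcal F_{\le r}$ with $r=J_{\sf P}(T)\in[1,\gamma]$; this forces $\Gamma(r)\ge\gamma$, contradicting the definition of $r_\gamma$, and your extra checks (finite risk gives $q_\nu(T)>0$, and $r\ge1$ from the integer residual delay) are the same details the paper uses, only spelled out more explicitly.
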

\begin{proof}
See Appendix~\ref{app:frontier-lower-bound}.
\end{proof}

\subsection{Existence of a Frontier Optimizer via Tail Control}
\label{sec:tail-optimizer}

To prove the reverse inequality $V_{\sf P}(\gamma)\le r_\gamma$, it is
enough to show that the frontier value $\Gamma(r)$ is achieved. Consider
therefore a feasible sequence $\{\mathbf y^{(k)}\}$ whose ARLs converge to
$\Gamma(r)$. The main difficulty is to pass to a limiting survival process
without losing ARL through increasingly late stopping times. We address this
in two steps: first, weak-* compactness yields convergence of the survival
coordinates at every fixed time; second, a uniform geometric tail bound
prevents survival probability from escaping to arbitrarily large times. Combined, these properties preserve both feasibility and ARL in the limit and yield a frontier optimizer. More extended discussions are provided in Appendix~\ref{app:additional_section_4_3}.
The first ingredient of our analysis is compactness, which indicates that an optimizing sequence contains a subsequence whose survival coordinates converge at every fixed time. Weak-* convergence is natural here because it preserves the finite-time expectations that enter both the ARL and the fixed-risk constraints. This is formalized in the next lemma.
\begin{lemma}[Sequential compactness of the survival space]
\label{lem:survival-compact}
Every sequence
$\{\mathbf y^{(k)}\}_{k\ge1}\subseteq\mathcal Y$ has a subsequence
$\{\mathbf y^{(k_j)}\}_{j\ge1}$ and a survival process
$\mathbf y^\star\in\mathcal Y$ such that, for every fixed $n$,
\begin{align}
y_n^{(k_j)}
\longrightarrow
y_n^\star
\qquad
\text{in the weak-* topology as }j\to\infty
\; .
\end{align}
\end{lemma}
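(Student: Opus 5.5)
The plan is to regard each coordinate $y_n$ as an element of the unit ball of $L^\infty(\mathsf X^n,\mathcal X^{\otimes n},P_\infty^n)$. This space is the dual of the separable space $L^1(P_\infty^n)$ whenever $\mathcal X$ is countably generated. If $\mathcal X$ is not countably generated, we may replace $\mathcal X^{\otimes n}$ by the sub-$\sigma$-algebra generated by the likelihood ratios and finitely many relevant functions, or argue with nets and then extract a sequence. By Banach--Alaoglu, and metrizability of the weak-* topology on bounded sets of the dual of a separable space, the closed unit ball is weak-* sequentially compact. Since $P_\nu$ and $P_\infty$ restricted to $X_{1:n}$ are mutually absolutely continuous with density $L_{\nu,n}\in L^1(P_\infty^n)$, weak-* convergence in $L^\infty(P_\infty^n)$ automatically gives $\mathbb E_\nu[y_n^{(k)}]\to\mathbb E_\nu[y_n^\star]$ for every $\nu$. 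This is the property the subsequent tail-control argument will need.

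First, a diagonal extraction. From $\{y_1^{(k)}\}$ I extract a subsequence converging weak-* in $L^\infty(P_\infty^1)$ to some $\tilde y_1$. From that subsequence I extract a further one along which $y_2^{(k)}$ converges weak-* in $L^\infty(P_\infty^2)$ to $\tilde y_2$, and so on. The diagonal subsequence $k_j$ then has $y_n^{(k_j)}\to\tilde y_n$ weak-* for every fixed $n$. I set $\tilde y_0=1$.

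Second, I show that the limit inherits the constraints defining $\mathcal Y$ almost surely. The order constraints are closed under weak-* limits. For $0\le y_n\le 1$, test against nonnegative $g\in L^1$: $\int g\,\tilde y_n=\lim\int g\,y_n^{(k_j)}\in[0,\int g]$, so $0\le\tilde y_n\le1$ a.s. For monotonicity, take $g\ge0$ in $L^1(P_\infty^n)$. Then
\begin{align}
\int g(x_{1:n})\bigl(y_{n-1}^{(k_j)}(x_{1:n-1})-y_n^{(k_j)}(x_{1:n})\bigr)\,\dd P_\infty^n\ \ge 0 .
\end{align}
The first term equals $\int \bar g\,y_{n-1}^{(k_j)}\,\dd P_\infty^{n-1}$, where $\bar g(x_{1:n-1}):=\int g\,\dd P_\infty(x_n)\in L^1(P_\infty^{n-1})$, by Fubini. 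Passing to the limit yields $\tilde y_n\le \tilde y_{n-1}$, $P_\infty^n$-a.s.

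Third, I produce a genuine element of $\mathcal Y$, since $\mathcal Y$ is defined pointwise for every history while $\tilde y_n$ is only an equivalence class. I choose measurable representatives and set
\begin{align}
y_n^\star:=\min\bigl(1,\max(0,\tilde y_1),\ldots,\max(0,\tilde y_n)\bigr),
\end{align}
with each $\tilde y_m$ viewed as a function of $x_{1:m}$. This sequence is measurable and pointwise nonincreasing in $n$, with values in $[0,1]$, so it lies in $\mathcal Y$. By step two it agrees $P_\infty^n$-a.s. with $\tilde y_n$, hence represents the same weak-* limit.

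I expect the main obstacle to be the functional-analytic setup rather than the algebra. It must be clear that "weak-*" means duality with $L^1(P_\infty^n)$, and that sequential (not merely net) compactness holds. Separability of $L^1$ handles the latter. If the paper's observation space is general, I would first note that only countably many integrands, namely the products of likelihood ratios, need to be tested. One could then work in the separable closed subspace they generate, extracting convergence of $\int y_n^{(k)}h$ for a countable dense set of $h$, and finally obtain the limit via Alaoglu on a suitable separable reduction.
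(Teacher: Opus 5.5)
Your proposal is correct and follows essentially the same route as the paper: Banach--Alaoglu together with separability of $L^1(P_\infty^n)$ gives weak-* sequential compactness of each coordinate, a diagonal extraction handles all $n$, and the constraints $0\le y_n\le \iota_n y_{n-1}$ pass to the limit by testing against nonnegative $L^1$ functions, with Fubini used for the lift. Your extra step of choosing pointwise-monotone representatives is a harmless refinement that the paper leaves implicit. The fallback remarks for non-countably-generated $\mathcal X$ are unnecessary, since the paper works on a standard Borel space; in any case, the unit ball of $L^\infty$ for a finite measure is weak-* sequentially compact, e.g.\ via weak compactness in $L^2$.
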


\begin{proof}
See Appendix~\ref{app:survival-compact}.
\end{proof}
Lemma~\ref{lem:survival-compact} provides a candidate limiting survival
process, but coordinate-wise convergence alone does not control the infinite ARL tail.  To obtain such control, we first need to transfer a nontrivial stopping probability over a finite post-change block into a
uniformly positive stopping probability under the no-change law. To obtain uniform tail control, for an integer $M\ge1$ and $a\in(0,1)$ define 
\begin{align}
\tau_M(a)
:=
\inf
\left\{
\mathbb E_\infty^M[\Phi]:
0\le\Phi\le1 \; ,
\quad
\mathbb E_0^M[\Phi]\ge a
\right\}
\; .
\label{eq:transfer-modulus}
\end{align}

\begin{lemma}[Finite-block probability transfer]
\label{lem:transfer}
For every finite $M\ge1$ and every
$a\in(0,1)$, there exists a constant $c_{M,a}>0$ such that every $[0,1]$-valued random variable $\Phi$ satisfying $\mathbb E_0^M[\Phi]\ge a$ also satisfies
\begin{align}
\mathbb E_\infty^M[\Phi]
\ge
c_{M,a}
\; .
\end{align}
\end{lemma}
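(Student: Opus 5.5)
We prove the bound by a direct change-of-measure argument on the block $\mathsf X^M$, using the mutual absolute continuity of $P_0$ and $P_\infty$ assumed in Section~\ref{sec:model}. Write $L:=\prod_{j=1}^M \Lambda_j = \frac{\dd P_0^M}{\dd P_\infty^M}(X_{1:M})$. Since $P_0^M\ll P_\infty^M$, we have $L<\infty$ $P_0^M$-almost surely and $\mathbb E_0^M[\Phi]=\mathbb E_\infty^M[\Phi L]$ for every bounded measurable $\Phi$.

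Since $L$ is a.s.\ finite under $P_0^M$, we have $P_0^M(L>K)\to0$ as $K\to\infty$. Choose $K=K_{M,a}<\infty$ such that $P_0^M(L>K)\le a/2$. This choice depends only on $M$, $a$, and the model, and not on $\Phi$.

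Now take any $\Phi$ with $0\le\Phi\le1$ and $\mathbb E_0^M[\Phi]\ge a$. Split the expectation on the event $\{L\le K\}$ and its complement:
\begin{align}
a\le \mathbb E_0^M[\Phi]
= \mathbb E_0^M[\Phi\one_{\{L\le K\}}]+\mathbb E_0^M[\Phi\one_{\{L>K\}}]
\le \mathbb E_\infty^M[\Phi L\one_{\{L\le K\}}]+\tfrac a2
\le K\,\mathbb E_\infty^M[\Phi]+\tfrac a2 .
\end{align}
The middle step uses $\Phi\le1$ to bound the second term by $P_0^M(L>K)\le a/2$, and the change of measure to rewrite the first term. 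The last step uses $\Phi\ge0$ and $L\one_{\{L\le K\}}\le K$. Rearranging gives $\mathbb E_\infty^M[\Phi]\ge a/(2K)=:c_{M,a}>0$. Equivalently, $\tau_M(a)\ge c_{M,a}$ in \eqref{eq:transfer-modulus}.

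If $\Phi$ is allowed to depend on auxiliary randomization (for example $\Phi=\mathbb P_U(\cdot\mid X_{1:M})$ applied to a stopping event), we first average over $U$. This reduces to a $[0,1]$-valued measurable function of $X_{1:M}$, and the same inequality applies.

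The only delicate point is making the constant uniform in $\Phi$. This is handled by choosing $K$ from the law of $L$ alone, before $\Phi$ is introduced. The quantitative choice of $K$ could be sharpened, for example via Markov's inequality under $P_0^M$ on $\log L$, but this is not needed for existence of $c_{M,a}$.
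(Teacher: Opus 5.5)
Your proof is correct and follows essentially the same route as the paper. You truncate the block likelihood ratio at a level chosen from its law alone, bound the tail contribution by $a/2$ (your $P_0^M(L>K)$ is exactly the paper's $\mathbb E_\infty^M[Z_M\one_{\{Z_M>C\}}]$ under $P_0^M\ll P_\infty^M$), and bound the remainder by $K\,\mathbb E_\infty^M[\Phi]$, which yields $c_{M,a}=a/(2K)$.
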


\begin{proof}
See Appendix~\ref{app:transfer}.
\end{proof}
Lemma~\ref{lem:transfer} provides exactly the probability-transfer
mechanism that is needed.  Combining it with the Pollak delay bound shows that, over
every sufficiently long block, a fixed positive fraction of the surviving
no-change probability must disappear, uniformly over the entire
fixed-risk class.
Fix a finite risk bound $R\ge1$, choose $m_R\ge1$ such that $m_R+1\ge2R$, and define
\begin{align}
\delta_R
:=
\tau_{m_R}\!\left(\frac12\right)
>
0
\; .
\label{eq:block-delta}
\end{align}
The Pollak delay bound forces a probability of at least $1/2$ of stopping over such a post-change block, while Lemma~\ref{lem:transfer} transfers a uniformly positive fraction of this probability to the no-change law. Iteration yields the following geometric tail bound.

\begin{lemma}[Uniform block contraction]
\label{lem:block-contraction}
For every finite $R\ge1$ and every
$\mathbf y\in\mathcal Y_{\le R}$,
\begin{align}
q_{n+m_R}(\mathbf y)
&\le
(1-\delta_R)q_n(\mathbf y) \; ,
\qquad n\ge0
\; ,
\label{eq:block-contraction}\\
\sum_{n=j m_R}^{\infty}
q_n(\mathbf y)
&\le
\frac{m_R}{\delta_R}
(1-\delta_R)^j
\; ,
\qquad j\ge0
\; .
\label{eq:uniform-tail-bound}
\end{align}
\end{lemma}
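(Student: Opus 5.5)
Throughout, I read $\mathcal Y_{\le R}$ as the extended fixed-risk class $\mathcal F_{\le R}$ of \eqref{eq:fixed-risk-feasible-set}. The plan has three steps: use the delay constraint at $\nu=n$ to force at least half of the surviving mass to stop within the post-change block $\{n+1,\dots,n+m_R\}$; transfer this to the no-change law history by history via a conditional stopping probability; then iterate.

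\textbf{Step 1 (post-change block).} Fix $n\ge0$. If $q_n(\mathbf y)=0$, then $y_n=0$ $P_\infty^n$-a.s., and by monotonicity $q_{n+m_R}=0$, so \eqref{eq:block-contraction} is trivial. Otherwise, apply the constraint $A_n(\mathbf y)\le R\,q_n(\mathbf y)$. Since $k\mapsto\mathbb P_n(T>k)$ is nonincreasing,
\begin{align*}
A_n(\mathbf y)\ \ge\ \sum_{k=n}^{n+m_R}\mathbb P_n(T>k)\ \ge\ (m_R+1)\,\mathbb P_n(T>n+m_R).
\end{align*}
Combined with $m_R+1\ge 2R$, this gives $\mathbb P_n(T>n+m_R)\le q_n/2$. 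Because $X_{1:n}$ has law $P_\infty^n$ under $\mathbb P_n$, we also have $\mathbb P_n(T>n)=q_n$. Hence the probability of stopping in the block, under a change at $n$, is at least $q_n/2$.

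\textbf{Step 2 (per-history transfer).} For each history $x_{1:n}$ with $y_n(x_{1:n})>0$, define the $[0,1]$-valued block functional
\begin{align*}
\Phi_{x_{1:n}}(x_{n+1:n+m_R}) := 1-\frac{y_{n+m_R}(x_{1:n+m_R})}{y_n(x_{1:n})}.
\end{align*}
It is $[0,1]$-valued by \eqref{eq:survival-compatibility}. Introduce the weights $w(x_{1:n}):=y_n(x_{1:n})/q_n$, which form a probability density with respect to $P_\infty^n$, and set $a(x_{1:n}):=\mathbb E_0^{m_R}[\Phi_{x_{1:n}}]$. Independence of the post-change block from $X_{1:n}$ under $\mathbb P_n$ gives
\begin{align*}
\mathbb P_n(T\le n+m_R)-\mathbb P_n(T\le n) = q_n\,\Einf\bigl[w\,a\bigr]\ \ge\ q_n/2,
\end{align*}
so $\Einf[w\,a]\ge 1/2$. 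The same factorization under $\Pinf$ gives
\begin{align*}
q_n-q_{n+m_R} = q_n\,\Einf\bigl[w\,\mathbb E_\infty^{m_R}[\Phi_{X_{1:n}}]\bigr] \ \ge\ q_n\,\Einf\bigl[w\,\tau_{m_R}(a)\bigr],
\end{align*}
where the inequality is the definition \eqref{eq:transfer-modulus} of $\tau_{m_R}$.

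\textbf{Main obstacle.} The average of $a$ exceeds $1/2$, but individual histories need not, so Lemma~\ref{lem:transfer} cannot be applied history by history. I will handle this with two properties of $\tau_M$ (extended by $\tau_M(0):=0$). First, $\tau_M$ is nondecreasing, immediately from the definition. Second, $\tau_M$ is convex on $[0,1]$: if $\Phi_1,\Phi_2$ are feasible at levels $a_1,a_2$, then $\lambda\Phi_1+(1-\lambda)\Phi_2$ is feasible at level $\lambda a_1+(1-\lambda)a_2$, and its $\mathbb E_\infty^M$-value is the corresponding mixture. Jensen's inequality under the probability weights $w$ then gives
\begin{align*}
\Einf[w\,\tau_{m_R}(a)] \ \ge\ \tau_{m_R}\bigl(\Einf[w\,a]\bigr) \ \ge\ \tau_{m_R}(1/2) = \delta_R,
\end{align*}
and $\delta_R>0$ by Lemma~\ref{lem:transfer}. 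This proves $q_{n+m_R}\le(1-\delta_R)q_n$. Measurability of $(x_{1:n},x_{n+1:n+m_R})\mapsto\Phi$ follows from measurability of the $y_n$ and needs only a brief remark.

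\textbf{Step 3 (tail bound).} Iterating \eqref{eq:block-contraction} from $q_0=1$ gives $q_{\ell m_R}\le(1-\delta_R)^\ell$. Since $q$ is nonincreasing in $n$,
\begin{align*}
\sum_{n\ge jm_R}q_n\ \le\ \sum_{\ell\ge j}m_R\,q_{\ell m_R}\ \le\ \frac{m_R}{\delta_R}(1-\delta_R)^j,
\end{align*}
which is \eqref{eq:uniform-tail-bound}.
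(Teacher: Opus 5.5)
Your proof is correct. Steps 1 and 3 match the paper: the Markov-type bound $\mathbb P_n(T>n+m_R\mid T>n)\le R/(m_R+1)\le 1/2$, and then iteration at block endpoints combined with monotonicity of $q_n$. The difference is in the transfer step, where you take a genuinely different route.

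The paper never meets your ``main obstacle''. It first averages the stopping probability over the common conditional law of the surviving history. This gives a single $[0,1]$-valued functional $\bar\psi$ of the future block alone, with $\mathbb E_0^{m_R}[\bar\psi]\ge 1/2$. It then applies Lemma~\ref{lem:transfer} (via the definition $\delta_R=\tau_{m_R}(1/2)$) exactly once. In your notation this functional is $\bar\Phi(z):=\Einf[w(X_{1:n})\Phi_{X_{1:n}}(z)]$. By Fubini, $\mathbb E_0^{m_R}[\bar\Phi]=\Einf[w\,a]\ge 1/2$ and $\mathbb E_\infty^{m_R}[\bar\Phi]=(q_n-q_{n+m_R})/q_n$, so $q_n-q_{n+m_R}\ge\delta_R q_n$ follows immediately.

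Your convexity-plus-Jensen argument uses the same mixing idea: your proof of convexity is precisely that mixtures of feasible block functionals are feasible. You apply it after taking the infimum rather than before. That is valid, but it needs bookkeeping the paper's route avoids:
\begin{itemize}
\item the conventions $\tau_{m_R}(0)=0$ and the value at $a=1$;
\item Borel measurability of $x_{1:n}\mapsto\tau_{m_R}(a(x_{1:n}))$, which holds because $\tau_{m_R}$ is monotone;
\item Jensen's inequality for a convex function on a closed interval.
\end{itemize}

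What your version gains is that it is written entirely in terms of the survival functions $y_n$ and the density $w=y_n/q_n$ of $X_{1:n}$ given survival. This makes rigorous the paper's somewhat informal appeal to the conditional law of the full detector history $\mathsf H_n$, including auxiliary randomization. Combining your survival-function bookkeeping with the paper's average-first step gives the shortest fully rigorous proof. Your observation that $\tau_M$ is convex and nondecreasing is a structural fact the paper does not record.
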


\begin{proof}
See Appendix~\ref{app:block-contraction}.
\end{proof}
We now have the two ingredients needed for achieving the frontier value $\Gamma(r)$. Lemma~\ref{lem:survival-compact} provides convergence on every finite set
of coordinates, while Lemma~\ref{lem:block-contraction} prevents ARL mass from escaping to arbitrarily late times.  Together, they allow passage to the limit while preserving both feasibility and the frontier
objective.

\begin{theorem}[Existence of a fixed-risk frontier optimizer]
\label{thm:frontier-optimizer}
For every finite $r\ge1$, there exists a survival process $\mathbf y_r^\star
\in \mathcal Y_{\le r}$ such that $\Gamma(r)
= \operatorname{ARL}(\mathbf y_r^\star)$.
Thus, the supremum defining $\Gamma(r)$ is achieved.
\end{theorem}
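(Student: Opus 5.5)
Fix a finite $r\ge1$; note $\mathcal Y_{\le r}$ is the fixed-risk class $\mathcal F_{\le r}$. The plan is a direct-method argument. It is nonempty, since the trivial rule stopping at time $1$ (i.e.\ $y_n=0$ for $n\ge1$) gives $A_\nu=0$ for $\nu\ge1$ and $A_0=1\le r$. By Lemma~\ref{lem:block-contraction} with $R=r$, every $\mathbf y\in\mathcal Y_{\le r}$ has $\operatorname{ARL}(\mathbf y)\le m_r/\delta_r<\infty$, so $\Gamma(r)<\infty$. We can therefore choose a maximizing sequence $\{\mathbf y^{(k)}\}\subseteq\mathcal Y_{\le r}$ with $\operatorname{ARL}(\mathbf y^{(k)})\to\Gamma(r)$. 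By Lemma~\ref{lem:survival-compact} we pass to a subsequence (not relabeled) and obtain $\mathbf y^\star\in\mathcal Y$ with $y_n^{(k)}\to y_n^\star$ weak-* for every $n$. Here I read weak-* as $\sigma(L^\infty(P_\infty^n),L^1(P_\infty^n))$ on the $n$-fold product space.

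\textbf{Step 1: finite-time functionals pass to the limit.} For fixed $n$, $q_n(\mathbf y)=\Einf[y_n]$ is integration against the constant $1\in L^1(P_\infty^n)$. For $\nu\le n$, $\mathbb E_\nu[y_n(X_{1:n})]=\Einf[y_n(X_{1:n})L_{\nu,n}]$, and $L_{\nu,n}\in L^1(P_\infty^n)$ since $\Einf[L_{\nu,n}]=1$. For $\nu>n$ the law of $X_{1:n}$ under $\mathbb P_\nu$ is $P_\infty^n$. Hence every term $q_n$ and every summand $\mathbb E_\nu[y_n]$ of $A_\nu$ converges along the subsequence.

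\textbf{Step 2: tail control, which is the main obstacle.} Both the ARL and each $A_\nu$ are infinite sums, so coordinatewise convergence does not suffice. For the ARL, \eqref{eq:uniform-tail-bound} gives $\sum_{n\ge jm_r}q_n(\mathbf y^{(k)})\le (m_r/\delta_r)(1-\delta_r)^j$ uniformly in $k$. Truncating at $N=jm_r$, letting $k\to\infty$ and then $j\to\infty$ yields $\operatorname{ARL}(\mathbf y^{(k)})\to\operatorname{ARL}(\mathbf y^\star)$. For $A_\nu$ I would avoid needing a post-change tail bound by using only the inequality direction. Since every summand is nonnegative,
\begin{align}
\sum_{n=\nu}^{N}\mathbb E_\nu[y^\star_n]=\lim_{k\to\infty}\sum_{n=\nu}^{N}\mathbb E_\nu[y^{(k)}_n]\le\liminf_{k\to\infty}A_\nu(\mathbf y^{(k)})\le r\lim_{k\to\infty}q_\nu(\mathbf y^{(k)})=r\,q_\nu(\mathbf y^\star).
\end{align}
Letting $N\to\infty$ by monotone convergence gives $A_\nu(\mathbf y^\star)\le r q_\nu(\mathbf y^\star)$ for every $\nu$. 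Thus $\mathbf y^\star\in\mathcal Y_{\le r}$. This Fatou-type lower semicontinuity is what makes the constraints closed; the only genuine escape-of-mass issue is in the objective, and Lemma~\ref{lem:block-contraction} handles it.

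\textbf{Step 3: conclusion.} Feasibility gives $\operatorname{ARL}(\mathbf y^\star)\le\Gamma(r)$, and Step 2 gives $\operatorname{ARL}(\mathbf y^\star)=\lim_k\operatorname{ARL}(\mathbf y^{(k)})=\Gamma(r)$. So $\mathbf y_r^\star:=\mathbf y^\star$ attains the supremum. One point needs checking in the writeup: that the limit produced by Lemma~\ref{lem:survival-compact} preserves the monotonicity $y_n^\star\le y_{n-1}^\star$ a.s. The lemma asserts $\mathbf y^\star\in\mathcal Y$, and in any case this holds because $\{(f,g):0\le f\le g\le1\}$ is weak-* closed, tested against nonnegative $L^1$ functions on the $n$-fold product.
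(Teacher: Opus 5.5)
Your proposal is correct and follows essentially the same route as the paper. Both arguments take a weak-* convergent subsequence via Lemma~\ref{lem:survival-compact}, close the fixed-risk constraints by passing to the limit in the truncations $A_\nu^{[N]}$ and then letting $N\to\infty$, and preserve the ARL through the uniform geometric tail bound of Lemma~\ref{lem:block-contraction}. Your version is marginally tighter in two respects: it records $\Gamma(r)\le m_r/\delta_r<\infty$ explicitly, and it uses the tail bound only along the sequence, whereas the paper first shows the limit is feasible so it can apply the bound to the limit as well in a triangle inequality.
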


\begin{proof}
See Appendix~\ref{app:frontier-optimizer}.
\end{proof}
As discussed earlier, the existence guarantee in Theorem~\ref{thm:frontier-optimizer} is provided for the \emph{extended} feasible set, in which it is possible to have zero survival at a finite time. The next subsection repairs this feasible set extension without increasing its risk.

\subsection{Repairing a Finite Truncation Order}
\label{sec:tail-repair}

The feasible set extension temporarily allows $q_N(\mathbf y) = 0$ at a finite time $N$. This was useful for establishing the existence of an optimizer, but it is not permitted in the original Pollak problem. If $q_N=0$, then a change occurring at or after time $N$ is conditioned on an event of zero probability, and under our convention the resulting Pollak risk is infinite. Thus, before an extended frontier optimizer can be used to establish the achievability bound $V_{\sf P}(\gamma) \le r_\gamma$, we must convert it into a rule with strictly positive survival at every finite time. 

The goal of this subsection is to show that this conversion can be performed without sacrificing the performance obtained from the fixed-risk optimization. Specifically, if the extended rule terminates at a finite time, we will modify only its final surviving branch so that: 
\begin{enumerate}
\item its ARL remains unchanged;
\item all fixed-rate constraints preceding the terminal time remain unchanged;
\item survival becomes strictly positive at every finite time; and
\item every newly created late change-time fixed-rate constraint has conditional delay exactly $r$.
\end{enumerate}
Consequently, an extended rule satisfying the fixed-risk constraints at level $r$ is converted into an admissible Pollak rule whose risk remains no larger than $r$.

\vspace{.05 in} \noindent \textbf{The Finite-Terminal Defect:} Let $T$ be a stopping-time realization of an extended process in
$\mathcal Y_{\le r}$, and suppose that its survival probability vanishes at some finite time $N$, i.e., 
\begin{align}
q_{N-1}(T) > 0
\qquad \mbox{and} \qquad
q_N(T)=0
\; .
\label{eq:first-zero-survival}
\end{align}
Accordingly, define the terminal surviving event $E := \{T>N-1\}$. Because $N$ is the first zero-survival time, the event $E$ has positive probability under the no-change law. Furthermore, $q_N(T)=0$ means that the rule cannot survive beyond time $N$. Hence, on the event $E$, the extended rule is forced to stop exactly one step later. Thus, viewed from time $N-1$, the terminal branch of the extended rule has exactly one unit of residual lifetime. The idea behind the repair is to replace this deterministic one-step residual with a randomized residual that still has an average of 1 but has a positive tail extending indefinitely.

\vspace{.05 in} \noindent \textbf{The Bernoulli--Geometric Completion:} Fix $r>1$. Let $B \sim
\operatorname{Bernoulli}\!\left(\frac{1}{r}\right)$ and $
G \sim
\Geom\!\left(\frac{1}{r}\right)$, 
where for $G$ we have $\mathbb E[G] = r$. Take $B$ and $G$ independent of each other and of the observation process and of the randomization used by the original rule. On the terminal event $E$, we replace the forced stop at time $N$ as follows. With probability $1-1/r$, we stop one step earlier, at time $N-1$. With the remaining probability $1/r$, we attach a geometric tail of mean $r$. Thus given the rule $T$, design the new rule $\widetilde T$ as
\begin{align}
\widetilde T
:=
\begin{cases}
T, & E^c,
\\[1ex]
N-1, & E\cap\{B=0\}, \\[1ex]
N-1+G, & E\cap\{B=1\}.
\end{cases}
\label{eq:completion-rule}
\end{align}
The central identity behind this construction is $\mathbb E[BG]=1$. Therefore, conditional on reaching the terminal branch $E$,
\begin{align}
\mathbb E[\widetilde T\mid E]
&= N-1+\mathbb E[BG] = N
\; .
\label{eq:completion-conditional-mean}
\end{align}
The modified branch has exactly the same \emph{expected} terminal time as the original forced stop at $N$. This repair preserves the mean of the residual lifetime. The construction also remains within the behavioral randomized class introduced in \eqref{eq:behavioral-filtration}. Formally, the uniform auxiliary variable available at time $N-1$ can be split, through a measure-preserving relabeling, into independent uniform coordinates: one reproduces the original decision randomization, and the others generate $B$ and $G$. Thus, no extension of the admissible class of randomized stopping rules is required. The following lemma summarizes the properties of this completion.

\begin{lemma}[Geometric terminal completion]
\label{lem:tail-completion}
Let $r>1$, and let $T$ be an extended stopping rule satisfying $A_\nu(T)
\le r q_\nu(T)$ for all  $\nu\in\mathbb N_0$. Suppose there exists $N\ge2$ as the first integer such that $q_N(T)=0$.  Then there exists a randomized stopping time $\widetilde T$ such that $\Pinf(\widetilde T>n) > 0$ for all $n\in\mathbb N_0$ and 
\begin{align}
\Einf[\widetilde T]
&= \Einf[T]
\; ,
\label{eq:completion-mean}\\
D_\nu(\widetilde T)
&= D_\nu(T) \; ,
&& 0\le\nu\le N-2
\; ,
\label{eq:completion-early}\\
D_\nu(\widetilde T)
&=r \; ,
&& \nu\ge N-1
\; .
\label{eq:completion-late}
\end{align}
Consequently, $J_{\sf P}(\widetilde T)
\le r$.
\end{lemma}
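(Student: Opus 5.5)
We use the completion $\widetilde T$ of \eqref{eq:completion-rule} and compute its survival probabilities directly; every claim then follows by bookkeeping. Since the modification acts only on $E=\{T>N-1\}$, and $E\subseteq\{T>n\}$ for $n\le N-2$, we first note that $\{\widetilde T>n\}=\{T>n\}$ for all $n\le N-2$. Hence $\mathbb P_\nu(\widetilde T>n)=\mathbb P_\nu(T>n)$ for every $\nu$ and $n\le N-2$.

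For $n\ge N-1$ we have $\{\widetilde T>n\}=E\cap\{B=1\}\cap\{G>n-N+1\}$. Since $(B,G)$ is independent of the observations and of the original randomization,
\begin{align*}
\mathbb P_\nu(\widetilde T>n)=\mathbb P_\nu(E)\,\tfrac1r\,(1-\tfrac1r)^{n-N+1},\qquad n\ge N-1 .
\end{align*}
On the other side, $\mathbb P_\nu(T>N-1)=\mathbb P_\nu(E)$ and $\mathbb P_\nu(T>n)=0$ for $n\ge N$. The $\nu$-tail is therefore $\mathbb P_\nu(E)$ for $T$, and for $\widetilde T$ it is $\mathbb P_\nu(E)\sum_{k\ge0}\tfrac1r(1-\tfrac1r)^k=\mathbb P_\nu(E)$, which is just $\mathbb E[BG]=1$ again.

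The claims follow in order.

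\emph{Positivity.} We have $\Pinf(\widetilde T>n)>0$ for all $n$. For $n\le N-2$ this holds because $q_n(T)\ge q_{N-1}(T)>0$. For $n\ge N-1$ it holds because $\Pinf(E)>0$ and $r>1$.

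\emph{ARL.} Equation \eqref{eq:completion-mean} follows by summing the survival probabilities under $\Pinf$. The terms with $n\le N-2$ coincide, and the tails are equal.

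\emph{Early change times.} For $\nu\le N-2$, the quantity $A_\nu=\sum_{n\ge\nu}\mathbb P_\nu(\cdot>n)$ splits into terms with $n\le N-2$, which are identical, and a tail, which is equal. Also $q_\nu$ is unchanged. This gives \eqref{eq:completion-early}. The requirement $N\ge2$ guarantees that $\nu=0$ falls in this range.

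\emph{Late change times.} For $\nu\ge N-1$, we need that $E$ is $\mathcal F_{N-1}$-measurable and that the change occurs after time $N-1$. Then $\mathbb P_\nu(E)=\Pinf(E)$, so $\mathbb P_\nu(\widetilde T>n)=\Pinf(\widetilde T>n)$ for all $n\ge\nu$. The delay therefore reduces to a memoryless geometric computation:
\begin{align*}
D_\nu(\widetilde T)=\frac{\sum_{n\ge\nu}(1-\tfrac1r)^{n-N+1}}{(1-\tfrac1r)^{\nu-N+1}}=r .
\end{align*}
This gives \eqref{eq:completion-late}.

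Finally, consider $J_{\sf P}(\widetilde T)$. For $\nu\le N-2$ we have $q_\nu(T)>0$, so the hypothesis $A_\nu(T)\le rq_\nu(T)$ gives $D_\nu(T)\le r$. Together with \eqref{eq:completion-late}, this shows $J_{\sf P}(\widetilde T)=\sup_\nu D_\nu(\widetilde T)\le r$.

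The only step needing care is admissibility: $\widetilde T$ must be a stopping time for \eqref{eq:behavioral-filtration}, and $(B,G)$ must be independent of everything else. We plan to handle this as the text indicates. The variable $U_{N-1}$ is split measure-preservingly into two independent uniforms. The first reproduces the original decision at time $N-1$, so $E$ is unchanged in law. The second determines $B$ at time $N-1$. The geometric continuation is then generated from the fresh variables $U_N,U_{N+1},\dots$, stopping at each step with probability $1/r$. This construction also makes the independence used in the survival computation above legitimate.
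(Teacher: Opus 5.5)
Your proposal is correct and follows essentially the same route as the paper: the same Bernoulli--geometric completion on $E=\{T>N-1\}$, with the ARL and the constraints for $\nu\le N-2$ preserved because the tail $\sum_{n\ge N-1}\mathbb P_\nu(\cdot>n)$ equals $\mathbb P_\nu(E)$ for both rules (i.e., $\mathbb E[BG]=1$), and with $D_\nu(\widetilde T)=r$ for $\nu\ge N-1$ by memorylessness of $G$. The differences are only in presentation---you track the survival probabilities $\mathbb P_\nu(\widetilde T>n)$ rather than conditional means, and you generate $G$ from the fresh variables $U_N,U_{N+1},\ldots$ rather than from a split of $U_{N-1}$---and both variants are valid.
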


\begin{proof}
See Appendix~\ref{app:tail-completion}.
\end{proof}
\subsection{Exact Calibration of the Average Run Length}
\label{sec:arl-calibration}

Lemma~\ref{lem:tail-completion} resolves the admissibility issue. The resulting ARL,
however, may exceed the prescribed level $\gamma$. The next step is therefore to reduce the ARL to exactly $\gamma$ while preserving both
admissibility and the risk bound.
\begin{lemma}[Exact ARL calibration]
\label{lem:arl-calibration}
Let $\bar T$ be an admissible randomized stopping time satisfying $\Einf[\bar T]
= a \ge \gamma > 1$ and $
J_{\sf P}(\bar T) \le r$ where $r\ge1$. Then there exists an admissible randomized stopping time
$T_\gamma$ such that
\begin{align}
\label{eq:calibration-exact-arl}
\Einf[T_\gamma]
= \gamma\; , \qquad \mbox{and} \qquad 
J_{\sf P}(T_\gamma) \le r \; .
\end{align}
Furthermore, if $a>\gamma$, then $D_\nu(T_\gamma)
= D_\nu(\bar T)$ for all $\nu\ge 1$.
\end{lemma}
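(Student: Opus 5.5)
The plan is a single randomized ``early abort'' at time $1$: it lowers the ARL continuously while scaling, by a common factor, the numerator and denominator of every Pollak ratio with $\nu\ge1$. If $a=\gamma$ there is nothing to prove; take $T_\gamma:=\bar T$. Assume $a>\gamma$, and set
\begin{align}
p:=\frac{a-\gamma}{a-1}\in(0,1) ,
\end{align}
which is well defined because $a>\gamma>1$. Let $B\sim\operatorname{Bernoulli}(p)$ be generated from the auxiliary uniform $U_0$, independently of the observations and of the randomization used by $\bar T$. Splitting $U_0$ by a measure-preserving relabeling, as in the construction before Lemma~\ref{lem:tail-completion}, keeps us inside the class $\mathcal T$. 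Define $T_\gamma:=1$ on $\{B=1\}$ and $T_\gamma:=\bar T$ on $\{B=0\}$. This is a stopping time, since $\{T_\gamma=1\}$ is $\mathcal F_1$-measurable. Because $y_0=1$, every rule in the survival representation satisfies $\bar T\ge1$, so this respects the convention $T>0$.

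Next I would compute the survival processes. We have $y_0^\gamma=1$ and $y_n^\gamma(x_{1:n})=(1-p)\,\bar y_n(x_{1:n})$ for all $n\ge1$, so $\mathbf y^\gamma\in\mathcal Y$ and Lemma~\ref{lemma:survival-completeness} applies. Hence $q_n(T_\gamma)=(1-p)q_n(\bar T)>0$ for every $n\ge1$, which gives admissibility. By \eqref{eq:arl-survival},
\begin{align}
\Einf[T_\gamma]=1+(1-p)(a-1)=1+(\gamma-1)=\gamma .
\end{align}

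Then I would check the delays. For $\nu\ge1$, both $A_\nu$ and $q_\nu$ involve only $y_n$ with $n\ge\nu\ge1$, via \eqref{eq:A-survival} and \eqref{eq:survival-under-nu}. Both therefore scale by the same factor $1-p>0$, so $D_\nu(T_\gamma)=D_\nu(\bar T)\le r$; this is also the final claim of the lemma. The case $\nu=0$ is the only place where the scaling breaks, and I expect it to be the main point to check. Since $q_0=1$ for both rules,
\begin{align}
D_0(T_\gamma)=A_0(T_\gamma)=1+(1-p)\bigl(A_0(\bar T)-1\bigr)\le A_0(\bar T)=D_0(\bar T)\le r .
\end{align}
The inequality uses $A_0(\bar T)\ge \mathbb P_0(\bar T>0)=1$ together with $1-p<1$. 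Combining the two cases gives $J_{\sf P}(T_\gamma)\le r$, which completes the argument.
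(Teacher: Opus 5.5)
Your proposal is correct and is essentially the paper's own proof. You use the same construction: an observation-independent mixture that stops at time $1$ with probability $p=(a-\gamma)/(a-1)$ (the paper's $1-\alpha$) and otherwise runs $\bar T$. Your argument follows the same steps: the common factor $1-p$ cancels in every ratio with $\nu\ge1$, and at $\nu=0$ the new delay is a convex combination of $1$ and $D_0(\bar T)$, so it stays at most $r$.
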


\begin{proof}
See Appendix~\ref{app:arl-calibration}.
\end{proof}
\subsection{Exact Minimax Characterization}
\label{sec:general-main}

We now complete the achievability direction and establish the exact
minimax identity
\begin{align}
V_{\sf P}(\gamma)=r_\gamma
\; .
\end{align}
The preceding subsections have established frontier achievement, repaired finite-terminal optimizers, and provided exact ARL calibration. Two final facts are needed at the crossing risk itself. First, the frontier must
actually achieve the prescribed level $\gamma$ at $r_\gamma$; second, $r_\gamma$ must exceed one so that the terminal-completion construction
is available whenever needed.
\begin{lemma}[The frontier reaches the crossing]
\label{lem:crossing-achieved}
For every $\gamma>1$, we have $\Gamma(r_\gamma) \ge \gamma$. Consequently,
\begin{align}
r_\gamma
= \min \left\{ r\in[1,\gamma]:
\Gamma(r)\ge\gamma \right\}
\; .
\label{eq:r-gamma-minimum}
\end{align}
\end{lemma}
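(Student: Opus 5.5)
We need to show $\Gamma(r_\gamma)\ge\gamma$, i.e., that $\Gamma$ is upper semicontinuous from the right at $r_\gamma$. The minimum characterization in \eqref{eq:r-gamma-minimum} then follows at once: $r_\gamma\in[1,\gamma]$ because the set is nonempty (the geometric rule gives $\Gamma(\gamma)\ge\gamma$) and closed under the infimum.

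\emph{Reduction to a limit of optimizers.} If $r_\gamma=\gamma$, the geometric rule already gives $\Gamma(\gamma)\ge\gamma$, so assume $r_\gamma<\gamma$. Choose $r_k\downarrow r_\gamma$ with $r_k\in(r_\gamma,\gamma]$. By definition of the infimum and monotonicity of $\Gamma$, $\Gamma(r_k)\ge\gamma$ for every $k$. By Theorem~\ref{thm:frontier-optimizer}, pick $\mathbf y^{(k)}\in\mathcal F_{\le r_k}$ with $\sum_n q_n(\mathbf y^{(k)})=\Gamma(r_k)\ge\gamma$. Everything now hinges on passing to a limit in $k$, in the same way as in the proof of Theorem~\ref{thm:frontier-optimizer}, but with a varying risk level.

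\emph{Limit passage.} All $\mathbf y^{(k)}$ lie in the fixed class $\mathcal F_{\le R}$ with $R:=\gamma$. Lemma~\ref{lem:block-contraction} therefore supplies the uniform tail bound $\sum_{n\ge jm_R}q_n(\mathbf y^{(k)})\le (m_R/\delta_R)(1-\delta_R)^j$, independent of $k$. Lemma~\ref{lem:survival-compact} gives a subsequence and $\mathbf y^\star\in\mathcal Y$ with $y^{(k_j)}_n\to y^\star_n$ weak-* for every $n$.

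Weak-* convergence preserves the finite-time expectations $\mathbb E_\infty[y_n]$ and $\mathbb E_\nu[y_n]=\mathbb E_\infty[L_{\nu,n}y_n]$. Here $L_{\nu,n}\in L^1(P_\infty^n)$ is a legitimate test function, since $y_n\in L^\infty$ with the weak-* duality against $L^1$. Hence $q_n$ and each summand of $A_\nu$ converge coordinatewise.

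The tails are controlled uniformly. The ARL tail is handled by the bound above. For $A_\nu$, note that $A_\nu(\mathbf y^{(k)})\le r_k q_\nu\le \gamma$, but a tail bound is needed for the post-change sums $\sum_{n\ge N}\mathbb E_\nu[y_n]$. I would use the same block-contraction mechanism applied to the post-change law: the delay bound $D_{\nu'}\le R$ for $\nu'\ge\nu$ forces contraction of $\mathbb P_\nu(T>n)$ over blocks of length $m_R$ by a factor $\le 1/2$ (Markov). Alternatively, Fatou's lemma suffices for the inequality direction, since the summands are nonnegative:
\[
A_\nu(\mathbf y^\star)\le\liminf_j A_\nu(\mathbf y^{(k_j)})\le \liminf_j r_{k_j}q_\nu(\mathbf y^{(k_j)})=r_\gamma q_\nu(\mathbf y^\star).
\]
So $\mathbf y^\star\in\mathcal F_{\le r_\gamma}$. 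Because the uniform tail bound allows interchanging the limit with the sum, $\sum_n q_n(\mathbf y^\star)=\lim_j\Gamma(r_{k_j})\ge\gamma$. Thus $\Gamma(r_\gamma)\ge\gamma$.

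\emph{Main obstacle.} The delicate step is the ARL limit: coordinatewise convergence alone could lose mass at infinity. This is exactly why the uniform (in $k$) geometric tail from Lemma~\ref{lem:block-contraction} at the fixed level $R=\gamma$ is essential, and why $r_k$ must be kept bounded (by $\gamma$) rather than allowed to grow. The constraint side only needs lower semicontinuity via Fatou, which is routine.
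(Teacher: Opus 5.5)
Your proof is correct and follows the paper's argument essentially step for step: frontier optimizers at $r_k\downarrow r_\gamma$ all lie in the common class $\mathcal F_{\le\gamma}$, coordinatewise weak-* compactness produces a limit, the constraints pass to the limit by a Fatou-type argument (the paper bounds the truncated numerator $A_\nu^{[N]}$, lets $k\to\infty$, then lets $N\to\infty$), and the uniform tail bound of Lemma~\ref{lem:block-contraction} at level $\gamma$ keeps the limiting ARL at least $\gamma$. Your post-change block-contraction aside is not justified as stated, because $D_{\nu'}\le R$ controls residuals under $\mathbb P_{\nu'}$, whose weighting of surviving histories differs from that under $\mathbb P_\nu$; the proof does not depend on it, since your Fatou step already handles the constraint side.
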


\begin{proof}
See Appendix~\ref{app:crossing-achieved}.
\end{proof}
Lemma~\ref{lem:crossing-achieved} shows that the frontier does not merely
approach $\gamma$ from above: the crossing risk itself supports the required
ARL. It remains only to verify that $r_\gamma>1$, which is needed if a finite-terminal optimizer must be completed.

\begin{lemma}[The crossing risk exceeds one]
\label{lem:r-gamma-positive}
For every $\gamma>1$ we have $r_\gamma > 1$.
\end{lemma}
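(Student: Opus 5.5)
The plan is to argue by contradiction through Lemma~\ref{lem:crossing-achieved}. By definition $r_\gamma\in[1,\gamma]$, so it suffices to exclude $r_\gamma=1$. If $r_\gamma=1$, then Lemma~\ref{lem:crossing-achieved} gives $\Gamma(1)\ge\gamma>1$. I will show instead that $\Gamma(1)=1$ exactly, which gives the contradiction. In other words, at risk level one the fixed-risk constraints force the rule to stop at time one almost surely.

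To compute $\Gamma(1)$, take any $\mathbf y\in\mathcal F_{\le 1}$ and look only at the constraint for $\nu=0$, namely $A_0(\mathbf y)\le q_0(\mathbf y)=y_0=1$. Expand $A_0(\mathbf y)=\sum_{n\ge0}\mathbb E_0[y_n(X_{1:n})]$ as in \eqref{eq:A-survival}. The $n=0$ term equals $y_0=1$. Every remaining term is nonnegative, so the constraint forces $\mathbb E_0[y_n(X_{1:n})]=0$ for all $n\ge1$. In particular $y_1(X_1)=0$ holds $P_0$-almost surely.

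Next I transfer this to the no-change law. The paper assumes that $P_0$ and $P_\infty$ are mutually absolutely continuous, so $y_1=0$ also holds $P_\infty$-almost surely, and hence $q_1(\mathbf y)=\Einf[y_1(X_1)]=0$. The monotonicity constraint $0\le y_n\le y_{n-1}$ from \eqref{eq:survival-compatibility} then gives $y_n=0$ $P_\infty^n$-almost surely for every $n\ge1$, so $q_n(\mathbf y)=0$ for all $n\ge1$. Consequently the ARL in \eqref{eq:arl-survival} equals $\sum_n q_n(\mathbf y)=q_0=1$ for every feasible $\mathbf y$. Since the deterministic rule $T\equiv1$ is feasible, the supremum is attained and $\Gamma(1)=1<\gamma$. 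This contradicts $\Gamma(1)\ge\gamma$, so $r_\gamma>1$.

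The only delicate step is the absolute-continuity transfer, since the constraint at $\nu=0$ only controls the process under the post-change law. It is exactly where the standing mutual absolute continuity assumption enters: with only $P_0\ll P_\infty$ the transfer could fail. Everything else follows directly from the definitions and from Lemma~\ref{lem:crossing-achieved}. Without that lemma, one would separately have to rule out $\Gamma(r)\ge\gamma$ holding only for $r\downarrow1$. That case would call for a continuity argument, for instance via the compactness in Lemma~\ref{lem:survival-compact} and the tail bound in Lemma~\ref{lem:block-contraction}, but Lemma~\ref{lem:crossing-achieved} already supplies it.
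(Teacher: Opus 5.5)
Your proof is correct and matches the paper's: both compute $\Gamma(1)=1$ from the $\nu=0$ constraint plus mutual absolute continuity, then contradict $\Gamma(r_\gamma)\ge\gamma$ from Lemma~\ref{lem:crossing-achieved}. The only cosmetic difference is that you transfer just $y_1=0$ to $P_\infty$ and propagate it by monotonicity, whereas the paper transfers each $y_n$ separately via equivalence of the product laws $P_0^n$ and $P_\infty^n$.
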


\begin{proof}
See Appendix~\ref{app:r-gamma-positive}.
\end{proof}
The two properties needed at the crossing are now in place:
$\Gamma(r_\gamma)\ge\gamma$ and $r_\gamma>1$. A frontier optimizer at
$r_\gamma$ therefore has ARL at least $\gamma$; if necessary, Lemma
~\ref{lem:tail-completion} converts it into a genuine Pollak-admissible
rule, and Lemma~\ref{lem:arl-calibration} calibrates its ARL to exactly
$\gamma$. Together with the lower bound in Lemma~\ref{lem:frontier-lower-bound},
this yields the exact minimax characterization.

\begin{theorem}[Exact Pollak minimax characterization]
\label{thm:main}
For every $\gamma>1$,
\begin{align}
V_{\sf P}(\gamma)
=
r_\gamma
=
\min
\left\{
r\in[1,\gamma]:
\Gamma(r)\ge\gamma
\right\}
\; .
\label{eq:main-value}
\end{align}
Furthermore, there exists an admissible randomized stopping time
$T_\gamma^\star$ satisfying
\begin{align}
\label{eq:positive-survival}
\Einf[T_\gamma^\star]
= \gamma \; , \quad J_{\sf P}(T_\gamma^\star)
= V_{\sf P}(\gamma)
\; , \quad \mbox{and}  \quad \Pinf(T_\gamma^\star>n)
> 0 \; , 
\forall\,n\in\mathbb N_0 \; .
\end{align}
\end{theorem}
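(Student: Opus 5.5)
The proof assembles the lemmas of this section into two inequalities. The lower bound $V_{\sf P}(\gamma)\ge r_\gamma$ is exactly Lemma~\ref{lem:frontier-lower-bound}. Lemma~\ref{lem:crossing-achieved} already gives the representation $r_\gamma=\min\{r\in[1,\gamma]:\Gamma(r)\ge\gamma\}$. What remains is to construct an admissible randomized rule $T_\gamma^\star$ with $\Einf[T_\gamma^\star]=\gamma$, $J_{\sf P}(T_\gamma^\star)\le r_\gamma$, and strictly positive survival at every finite time. Given such a rule, $V_{\sf P}(\gamma)\le J_{\sf P}(T_\gamma^\star)\le r_\gamma\le V_{\sf P}(\gamma)$, so all of these are equalities and $J_{\sf P}(T_\gamma^\star)=V_{\sf P}(\gamma)$.

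For the construction, first apply Theorem~\ref{thm:frontier-optimizer} at the finite risk level $r=r_\gamma$. This gives an extended survival process $\mathbf y^\star\in\mathcal F_{\le r_\gamma}$ with $\operatorname{ARL}(\mathbf y^\star)=\Gamma(r_\gamma)\ge\gamma$. Lemma~\ref{lemma:survival-completeness} realizes it as a randomized stopping time $T$ satisfying $A_\nu(T)\le r_\gamma q_\nu(T)$ for all $\nu$. There are then two cases.

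\emph{Case 1: $q_n(T)>0$ for every $n$.} Every $D_\nu(T)=A_\nu(T)/q_\nu(T)\le r_\gamma$, so $T$ is already admissible with $J_{\sf P}(T)\le r_\gamma$. Set $\bar T:=T$.

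\emph{Case 2: some $q_N(T)=0$.} Take $N$ to be the first such index. Since $y_0=1$, we have $N\ge1$. The case $N=1$ needs a separate check, because Lemma~\ref{lem:tail-completion} assumes $N\ge2$. If $q_1=0$, the rule stops at time $1$ almost surely, so $\Einf[T]=1<\gamma$, which contradicts $\operatorname{ARL}\ge\gamma>1$. Hence $N\ge2$. Lemma~\ref{lem:r-gamma-positive} gives $r_\gamma>1$, so Lemma~\ref{lem:tail-completion} applies. It produces $\bar T$ with positive survival everywhere, $\Einf[\bar T]=\Einf[T]\ge\gamma$, and $J_{\sf P}(\bar T)\le r_\gamma$. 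The early delays are unchanged and were at most $r_\gamma$ (there $q_\nu>0$), and the late delays equal $r_\gamma$.

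In either case, apply Lemma~\ref{lem:arl-calibration} to $\bar T$ with $a=\Einf[\bar T]\ge\gamma$. This yields an admissible $T_\gamma^\star$ with $\Einf[T_\gamma^\star]=\gamma$ and $J_{\sf P}(T_\gamma^\star)\le r_\gamma$. Admissibility in this paper's sense means finite Pollak risk, which requires $q_\nu>0$ for all finite $\nu$. So strict positivity of $\Pinf(T_\gamma^\star>n)$ follows from $J_{\sf P}(T_\gamma^\star)\le r_\gamma<\infty$ together with the convention that $J_{\sf P}=\infty$ whenever some $q_\nu=0$.

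The main obstacle is bookkeeping rather than new analysis: checking that each lemma's hypotheses hold at $r=r_\gamma$. The relevant points are finiteness of $r_\gamma$ ($r_\gamma\le\gamma$), $r_\gamma>1$ for the Bernoulli--Geometric completion, $N\ge2$, and the fact that the calibration lemma preserves positive survival. If the calibration lemma's notion of "admissible" were not explicit about positivity, I would verify directly that its construction keeps $q_n>0$. I expect this to hold because calibration should only mix in extra randomized stopping at finitely many early times, with probability less than one.
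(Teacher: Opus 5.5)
Your proposal is correct and follows the paper's proof essentially step for step: the lower bound from Lemma~\ref{lem:frontier-lower-bound}; a frontier optimizer at $r_\gamma$ via Lemma~\ref{lem:crossing-achieved} and Theorem~\ref{thm:frontier-optimizer}; the same two-case split, with $N\ge2$ forced by $\operatorname{ARL}\ge\gamma>1$ and $r_\gamma>1$ from Lemma~\ref{lem:r-gamma-positive}; terminal completion; then ARL calibration and the sandwich $V_{\sf P}(\gamma)\le J_{\sf P}(T_\gamma^\star)\le r_\gamma\le V_{\sf P}(\gamma)$. Deriving strictly positive survival from the convention that $J_{\sf P}=\infty$ whenever some $q_\nu=0$ is a valid shortcut; the paper instead reads it off the calibration construction, $q_\nu(T_\gamma)=\alpha q_\nu(\bar T)>0$.
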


\begin{proof}
See Appendix~\ref{app:main}.
\end{proof}

Theorem~\ref{thm:main} closes the loop begun with the fixed-risk reformulation. The scalar frontier has an exact operational interpretation: $r_\gamma$ is not merely a lower bound obtained by relaxing the original problem; it is the exact Pollak minimax value. The corresponding frontier optimizer can be implemented as a randomized stopping rule, repaired if its extended survival terminates at a finite time, and calibrated if necessary so that the false-alarm constraint holds with equality. The resulting $T_\gamma^\star$ is an exact optimizer over the complete randomized, history-dependent class.

\section{Truncated Approximation of the PVF}
\label{sec:truncated}

The preceding sections establish the existence of an admissible stopping rule that achieves the exact Pollak minimax value. While the characterization is exact, designing an optimal rule becomes equivalent to solving an infinite-dimensional variational problem: the frontier $\Gamma(r)$ is defined over an infinite survival process and involves one constraint for every possible change time $\nu\in\mathbb{N}_0$. Consequently, the existence result does not by itself provide a feasible construction of the optimal rule. 

In this section, we bridge this gap. Our approach relies on \emph{truncating} the survival process at a finite order $N$. This gives a truncated frontier $\Gamma_N(r)$, whose distance from the exact frontier can be bounded as a function of $N$ using the uniform tail control established in Lemma~\ref{lem:block-contraction}. To proceed, we define a truncated approximation to the PVF, denoted by $r_{\gamma,N}$, and show that
\begin{align}
r_{\gamma,N}
\downarrow
V_{\sf P}(\gamma)
\qquad
\text{as }N\to\infty
\; ,
\label{eq:finite-convergence-preview}
\end{align}
i.e., the truncated PVF crossings decrease monotonically to the exact PVF crossing. We show that the minimax value can be approached arbitrarily closely by increasing the truncation order $N$. This approximation applies to stopping rules as well. Such truncation yields a finite-dimensional optimizer that can be converted into an admissible stopping rule $T_{\gamma,N}$,  satisfying the following limiting behavior, in which all constants are fixed and only $N$ grows to infinity:
\begin{align}
0 \le J_{\sf P}(T_{\gamma,N})-V_{\sf P}(\gamma)
\le r_{\gamma,N}-V_{\sf P}(\gamma) \overset{N\rightarrow\infty}{\xrightarrow{\hspace{.8cm}}} 0
\; .
\label{eq:finite-rule-gap-preview}
\end{align}
Hence the truncated problem provides a sequence of admissible rules whose performance can be arbitrarily close to that of the exact Pollak optimizer. We note that throughout this section, the false-alarm level $\gamma>1$ and the observation laws $P_\infty$ and $P_0$ are fixed. The approximation
parameter is the truncation order $N$. Accordingly, all convergence statements in this section are with respect to $N\to\infty$, rather
than the classical asymptotic regime $\gamma\to\infty$.

\subsection{Truncated Frontier and Error Bound}

Fix a truncation order $N\ge1$. Let $\mathcal Y_N$ denote the projection of the survival-process space onto the coordinates $\{0,\ldots,N-1\}$. Equivalently, an element of $\mathcal Y_N$ may be viewed as an extended survival process obtained by setting $y_n=0$ for all $n\ge N$. Define the truncated ARL as
\begin{align}
\operatorname{ARL}_N(\mathbf y)
:=
\sum_{n=0}^{N-1}q_n(\mathbf y)
\; ,
\label{eq:finite-arl}
\end{align}
and, for $0\le\nu<N$, define the truncated delay numerator
\begin{align}
A_\nu^{[N]}(\mathbf y)
:=
\sum_{n=\nu}^{N-1}
\Einf
\left[
L_{\nu,n}y_n
\right]
\; .
\label{eq:finite-delay-numerator}
\end{align}
For a fixed risk level $r\ge1$, the truncated frontier is
\begin{align}
\Gamma_N(r)
:=
\max_{\mathbf y\in\mathcal Y_N}
\operatorname{ARL}_N(\mathbf y) \quad \mbox{subject to} \quad 
A_\nu^{[N]}(\mathbf y)
\le
r q_\nu(\mathbf y) \; ,
\qquad
0\le\nu<N
\; .
\label{eq:finite-risk-constraints}
\end{align}
Hence, $\Gamma_N(r)$ is the largest truncated ARL that can be achieved while satisfying every change-time constraint arising for truncation order $N$. For a finite observation alphabet, this is a finite-dimensional linear
program. For a general observation space, truncation makes the number
of time coordinates finite, although each survival function may remain infinite-dimensional. Section~\ref{sec:finite-dual} shows that, nevertheless, the dual of the truncated problem can be expressed as a finite-dimensional convex optimization over $N$ nonnegative dual
weights.

The connection between $\Gamma_N(r)$ and the exact frontier is particularly simple. A truncated feasible process can be extended by zero beyond $N-1$, and therefore it is also feasible for the extended infinite-dimensional problem. Conversely, truncating any feasible infinite process preserves every constraint before $N$, because truncation leaves $q_\nu$ unchanged and can only decrease $A_\nu$. The only quantity lost is the contribution of the survival tail to the ARL. 
Recall from Lemma~\ref{lem:block-contraction} that, for the fixed risk level $r$, there exist constants $m_r$ and $\delta_r>0$ such that the survival tail is uniformly bounded. Define
\begin{align}
\varepsilon_N(r)
:=
\frac{m_r}{\delta_r}
(1-\delta_r)^{\lfloor N/m_r\rfloor}
\; .
\label{eq:finite-error}
\end{align}
Then $\varepsilon_N(r)\to0$ as $N\to\infty$. The error bound is formalized in the next theorem.

\begin{theorem}[truncated frontier error bound]
\label{thm:finite-frontier-bounds}
For every finite $r\ge1$ and  $N\ge1$,
\begin{align}
\Gamma_N(r)
\le
\Gamma(r)
\le
\Gamma_N(r)+\varepsilon_N(r)
\; .
\label{eq:frontier-error-bound}
\end{align}
\end{theorem}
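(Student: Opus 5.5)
The plan is to prove the two inequalities separately, following the two observations made just before the statement: extension by zero embeds the truncated problem into the extended problem, and truncation of an extended feasible process loses only its ARL tail.

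\emph{Lower bound $\Gamma_N(r)\le\Gamma(r)$.} Take any $\mathbf y$ feasible for \eqref{eq:finite-risk-constraints}. Extend it to $\bar{\mathbf y}$ by setting $\bar y_n=0$ for $n\ge N$.

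\begin{itemize}
\item Since $\bar y_n=0\le \bar y_{n-1}$, monotonicity is preserved and $\bar{\mathbf y}\in\mathcal Y$.
\item For $\nu<N$, write $A_\nu(\bar{\mathbf y})=\sum_{n\ge\nu}\mathbb E_\nu[\bar y_n]$. By the change of measure $\mathbb E_\nu[y_n(X_{1:n})]=\Einf[L_{\nu,n}y_n]$, this equals $A^{[N]}_\nu(\mathbf y)\le r q_\nu(\mathbf y)=r q_\nu(\bar{\mathbf y})$.
\item For $\nu\ge N$, both $A_\nu(\bar{\mathbf y})$ and $q_\nu(\bar{\mathbf y})$ vanish, so the constraint holds with equality $0\le 0$. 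This is exactly the case the extended class $\mathcal F_{\le r}$ was designed to include.
\end{itemize}

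Hence $\bar{\mathbf y}\in\mathcal F_{\le r}$ and $\operatorname{ARL}(\bar{\mathbf y})=\operatorname{ARL}_N(\mathbf y)$. Taking the supremum gives $\Gamma_N(r)\le\Gamma(r)$.

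I will also note that $\Gamma_N(r)$ is attained, so that the "$\max$" in the definition is legitimate. This follows from the compactness in Lemma~\ref{lem:survival-compact} restricted to finitely many coordinates, since the objective and constraints involve only finitely many weak-* continuous functionals. Strictly, only the supremum is needed for the inequalities.

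\emph{Upper bound $\Gamma(r)\le\Gamma_N(r)+\varepsilon_N(r)$.} By Theorem~\ref{thm:frontier-optimizer}, choose $\mathbf y^\star_r\in\mathcal F_{\le r}$ attaining $\Gamma(r)$; alternatively, take any feasible $\mathbf y$ and pass to the supremum at the end. Let $\mathbf y^{[N]}$ be its projection onto $\{0,\dots,N-1\}$.

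\begin{itemize}
\item For $\nu<N$, truncation leaves $q_\nu$ unchanged.
\item For $\nu<N$, $A^{[N]}_\nu(\mathbf y^{[N]})\le A_\nu(\mathbf y)$, because the terms dropped, $\Einf[L_{\nu,n}y_n]$ for $n\ge N$, are nonnegative.
\end{itemize}

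So $\mathbf y^{[N]}$ is feasible for the truncated problem and $\operatorname{ARL}_N(\mathbf y^{[N]})\le\Gamma_N(r)$. It remains to bound the tail $\sum_{n\ge N}q_n(\mathbf y)$. Set $j=\lfloor N/m_r\rfloor$. Then $jm_r\le N$, and since $q_n\ge0$,
\[
\sum_{n\ge N}q_n(\mathbf y)\le\sum_{n\ge jm_r}q_n(\mathbf y)\le\frac{m_r}{\delta_r}(1-\delta_r)^{j}=\varepsilon_N(r),
\]
by \eqref{eq:uniform-tail-bound} of Lemma~\ref{lem:block-contraction} applied with $R=r$. Adding the two pieces gives $\operatorname{ARL}(\mathbf y)\le\Gamma_N(r)+\varepsilon_N(r)$ for every feasible $\mathbf y$, and the bound follows.

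\emph{Main obstacle.} The one point needing care is that Lemma~\ref{lem:block-contraction} must apply to every element of the \emph{extended} feasible set $\mathcal F_{\le r}$ (denoted $\mathcal Y_{\le r}$ in that lemma), including processes whose survival vanishes at a finite time. I expect this to hold because the block-contraction argument uses only the linear inequalities $A_\nu\le rq_\nu$. If $q_n=0$, then \eqref{eq:block-contraction} is trivial, since $q_{n+m_r}\le q_n=0$. So the geometric tail bound holds uniformly over the whole extended class, which is what the upper bound requires.
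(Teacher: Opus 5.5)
Your proof is correct and follows the paper's argument essentially step for step. It uses zero-extension for $\Gamma_N(r)\le\Gamma(r)$, and for the reverse inequality it truncates an arbitrary extended-feasible process and bounds the lost ARL with the uniform tail bound of Lemma~\ref{lem:block-contraction} at $j=\lfloor N/m_r\rfloor$, including the $q_n=0$ case on the extended class exactly as the paper's proof of that lemma does.
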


\begin{proof}
See Section~\ref{app:finite-frontier-bound}.
\end{proof}
The two inequalities in \eqref{eq:frontier-error-bound} have direct interpretations. The first follows because zero extension embeds every truncated feasible process into the infinite-length feasible class. For the second, truncating an infinite-length feasible process loses at most $\varepsilon_N(r)$ of its ARL. Hence, the entire difference between the two optimization problems is governed by the survival probability that persists beyond the truncation order. 
Theorem~\ref{thm:finite-frontier-bounds} also gives useful truncated tests for a candidate risk level $r$. If $\Gamma_N(r)\ge\gamma$, then the truncated problem already supports the required ARL, and the terminal-completion result from Lemma~\ref{lem:tail-completion} yields an admissible rule with Pollak risk no larger than $r$. Hence $V_{\sf P}(\gamma)\le r$. Conversely, if
\begin{align}
\Gamma_N(r)+\varepsilon_N(r)
<
\gamma
\; ,
\label{eq:finite-infeasible-test}
\end{align}
then $\Gamma(r)<\gamma$, and therefore $V_{\sf P}(\gamma)>r$. Increasing $N$ makes the gap between these two conclusions vanish.

\subsection{Truncated Crossing and Convergence}

We next use the truncated frontier in the same way that the exact frontier was used in Section~\ref{sec:fixed-risk}. For $N\ge\lceil\gamma\rceil$, define
\begin{align}
r_{\gamma,N}
:=
\min
\left\{
r\in[1,\gamma]:
\Gamma_N(r)\ge\gamma
\right\}
\; .
\label{eq:finite-crossing}
\end{align}

\begin{lemma}[Existence of the truncated crossing]
\label{lem:finite-crossing}
For every $\gamma>1$ and $N\ge\lceil\gamma\rceil$, the crossing value in \eqref{eq:finite-crossing} is well defined and satisfies $1
< r_{\gamma,N} \le \gamma $.
\end{lemma}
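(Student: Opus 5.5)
The plan has three parts: exhibit a feasible point at $r=\gamma$, show the infimum over $r$ is attained, and show that the attained value exceeds one. The last part reduces to Lemma~\ref{lem:r-gamma-positive} through Theorem~\ref{thm:finite-frontier-bounds}.

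\emph{Nonemptiness.} I will show $\Gamma_N(\gamma)\ge\gamma$ using an observation-independent truncated process. Set $k:=\lceil\gamma\rceil-1$, so that $k<\gamma\le k+1$ and $k\le N-1$ because $N\ge\lceil\gamma\rceil$. Define
\begin{align*}
y_n\equiv 1 \quad (0\le n<k), \qquad y_k\equiv\gamma-k\in(0,1], \qquad y_n\equiv0 \quad (n>k).
\end{align*}
This lies in $\mathcal Y_N$ and satisfies $\operatorname{ARL}_N=\gamma$.

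Since $y_n$ is constant in the observations and $\Einf[L_{\nu,n}]=1$, we get $A^{[N]}_\nu=\sum_{n\ge\nu}q_n$. I then check the constraints case by case:
\begin{itemize}
\item For $\nu<k$: $A^{[N]}_\nu=\gamma-\nu\le\gamma=\gamma q_\nu$.
\item For $\nu=k$: $A^{[N]}_k=q_k\le\gamma q_k$.
\item For $\nu>k$: both sides vanish.
\end{itemize}
Hence $\gamma$ belongs to the set in \eqref{eq:finite-crossing}, which gives the upper bound $r_{\gamma,N}\le\gamma$ once the minimum is known to exist.

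\emph{Attainment of the minimum.} Let $r^\ast$ be the infimum of the set in \eqref{eq:finite-crossing}, and take $r_k\downarrow r^\ast$ with $\Gamma_N(r_k)\ge\gamma$. For each $k$, pick $\mathbf y^{(k)}\in\mathcal Y_N$ that is feasible at level $r_k$ with $\operatorname{ARL}_N(\mathbf y^{(k)})\ge\gamma-1/k$. This avoids even needing the maximum in \eqref{eq:finite-risk-constraints} to be attained, although the same argument proves that it is.

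Applying Lemma~\ref{lem:survival-compact} to the finitely many coordinates $0,\ldots,N-1$ gives a subsequence converging weak-* to some $\mathbf y^\ast\in\mathcal Y_N$. The key point, and the only real technical step, is that every functional involved is weak-* continuous:
\begin{itemize}
\item $q_n(\mathbf y)=\Einf[y_n]$ and $\Einf[L_{\nu,n}y_n]$ pair $y_n\in L^\infty(P_\infty^n)$ with the $L^1(P_\infty^n)$ densities $1$ and $L_{\nu,n}$.
\item $L_{\nu,n}$ is integrable, with $\Einf[L_{\nu,n}]=1$.
\end{itemize}
Passing to the limit in $A^{[N]}_\nu(\mathbf y^{(k)})\le r_k q_\nu(\mathbf y^{(k)})$ gives $A^{[N]}_\nu(\mathbf y^\ast)\le r^\ast q_\nu(\mathbf y^\ast)$ for all $\nu<N$. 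The limiting ARL is also at least $\gamma$, since these are finite sums of continuous functionals. Therefore $\Gamma_N(r^\ast)\ge\gamma$, and the minimum is attained.

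\emph{Strict lower bound.} By Theorem~\ref{thm:finite-frontier-bounds}, $\Gamma(r_{\gamma,N})\ge\Gamma_N(r_{\gamma,N})\ge\gamma$. By the definition of $r_\gamma$, this gives $r_{\gamma,N}\ge r_\gamma$. Lemma~\ref{lem:r-gamma-positive} then yields $r_{\gamma,N}\ge r_\gamma>1$.

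I expect the main obstacle to be the weak-* passage to the limit in the likelihood-weighted constraint terms. It is harmless here because truncation leaves only finitely many constraints, each pairing a bounded survival function with an integrable likelihood ratio.
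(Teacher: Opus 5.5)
Your proof is correct, and its first two parts are essentially the paper's. Your profile $y_n\equiv 1$ for $n<k$, $y_k\equiv\gamma-k$ is exactly the survival sequence of the paper's observation-independent randomized deadline $T^{\rm d}$. Your limit along near-optimizers with $r_k\downarrow r^\ast$ compresses into one step what the paper does in two: it first proves that the maximum defining $\Gamma_N(r)$ is attained for each fixed $r$, and then shows the crossing set is closed using exact optimizers.

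The genuine difference is the strict bound $r_{\gamma,N}>1$. The paper stays inside the truncated problem. At $r=1$ the $\nu=0$ constraint reads $1+\sum_{n=1}^{N-1}\mathbb E_0[y_n]\le 1$. This forces $y_n=0$ $P_0^n$-a.s., hence $P_\infty^n$-a.s. by equivalence, so $\Gamma_N(1)=1<\gamma$, and attainment of the minimum then excludes $r_{\gamma,N}=1$.

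You instead route through $r_{\gamma,N}\ge r_\gamma$ and Lemma~\ref{lem:r-gamma-positive}. That is legitimate and not circular: neither that lemma nor the zero-extension inequality $\Gamma_N\le\Gamma$ uses the present statement. It also gives the comparison \eqref{eq:finite-upper-bound} for free. The cost is that an elementary finite-dimensional fact now depends on the full infinite-horizon apparatus. Lemma~\ref{lem:r-gamma-positive} rests on Lemma~\ref{lem:crossing-achieved}, hence on Theorem~\ref{thm:frontier-optimizer} and the uniform tail control of Lemmas~\ref{lem:transfer} and~\ref{lem:block-contraction}. Since your second part already proves that the truncated minimum is attained, the paper's direct two-line argument was available to you and keeps the lemma self-contained.
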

\begin{proof}
See Appendix~\ref{app:finite-crossing}.
\end{proof}
The upper endpoint follows from an observation-independent randomized deadline with mean $\gamma$, while $r=1$ can support only ARL one under the equivalence assumption. The finite crossing is directly related to the exact minimax value. Since $\Gamma_N(r)\le\Gamma(r)$, reaching ARL $\gamma$ is at least as difficult in the truncated problem as in the problem with an infinite survival process. Therefore,
\begin{align}
r_{\gamma,N}
\ge
r_\gamma
=
V_{\sf P}(\gamma)
\; .
\label{eq:finite-upper-bound}
\end{align}
Hence every truncated crossing provides an upper bound on the exact Pollak value. Furthermore, increasing the truncation order enlarges the finite feasible problem. In particular, any process of length $N$ can be embedded in the problem of length $(N+1)$ by setting its additional survival coordinate to zero. Thus $\Gamma_N(r)\le\Gamma_{N+1}(r)$, and consequently
\begin{align}
r_{\gamma,N+1}
\le
r_{\gamma,N}
\; .
\label{eq:crossing-monotonicity}
\end{align}
Hence,  $\{r_{\gamma,N}: N\geq \lceil\gamma\rceil\}$ is a decreasing sequence of upper bounds on $V_{\sf P}(\gamma)$.

The remaining question is whether these bounds actually converge to the exact value. Addressing this requires slightly more than the frontier error bound alone. Truncating an exact optimizer decreases its ARL by a small amount, so it may fail to reach $\gamma$ at the exact crossing $r_\gamma$. The proof resolves this by restoring the lost ARL through a mixture of the truncated optimizer and a one-step shifted copy. The increase in the corresponding risk bound is no larger than the ARL lost through truncation, which vanishes as $N\to\infty$. 
\begin{theorem}[Convergence of truncated PVF approximations]
\label{thm:finite-convergence}
For every $\gamma>1$,
\begin{align}
r_{\gamma,N}
\downarrow
V_{\sf P}(\gamma)
\qquad\text{as }N\to\infty
\; .
\end{align}
More precisely, for all sufficiently large $N$,
\begin{align}
0
\le
r_{\gamma,N+1}-V_{\sf P}(\gamma)
\le
\epsilon_N\bigl(V_{\sf P}(\gamma)\bigr)
\; .
\end{align}
Furthermore, for every $N\ge\lceil\gamma\rceil$, a truncated optimizer can
be converted into an admissible randomized stopping rule $T_{\gamma,N}$
satisfying
\begin{align}
\Einf[T_{\gamma,N}]
&=
\gamma
\; ,
&
J_{\sf P}(T_{\gamma,N})
&\le
r_{\gamma,N}
\; .
\end{align}
Consequently,
\begin{align}
0
\le
J_{\sf P}(T_{\gamma,N})-V_{\sf P}(\gamma)
\le
r_{\gamma,N}-V_{\sf P}(\gamma)
\longrightarrow
0
\; .
\end{align}
\end{theorem}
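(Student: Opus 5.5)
The monotonicity and lower-bound parts are already in hand. By \eqref{eq:finite-upper-bound} and \eqref{eq:crossing-monotonicity}, $\{r_{\gamma,N}\}$ is nonincreasing and bounded below by $r_\gamma=V_{\sf P}(\gamma)$ (Theorem~\ref{thm:main}). So the work is to prove the quantitative bound $r_{\gamma,N+1}\le r_\gamma+\varepsilon_N(r_\gamma)$ and then to build the rules $T_{\gamma,N}$. Convergence follows from the bound because $\varepsilon_N(r_\gamma)\to0$.

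\emph{Truncating and restoring the ARL.} Write $r=r_\gamma$. By Lemma~\ref{lem:crossing-achieved} and Theorem~\ref{thm:frontier-optimizer}, there is an extended optimizer $\mathbf y\in\mathcal F_{\le r}$ with $\operatorname{ARL}(\mathbf y)\ge\gamma$.
\begin{itemize}
\item Truncate it at $N$ by setting $y_n=0$ for $n\ge N$. The $q_\nu$ are unchanged and the $A_\nu$ can only decrease, so all constraints at level $r$ persist. By Lemma~\ref{lem:block-contraction}, as in Theorem~\ref{thm:finite-frontier-bounds}, $\operatorname{ARL}_N(\mathbf y)\ge\gamma-\theta$ with $\theta:=\varepsilon_N(r)$.
\item Form the one-step shifted copy $\mathbf y'$ of length $N+1$ by $y'_0=1$ and $y'_n(x_{1:n})=y_{n-1}(x_{1:n-1})$. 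This rule ignores the newest observation and postpones every stop by one step. It lies in $\mathcal Y_{N+1}$, has $q'_\nu=q_{\nu-1}$ (with $q_{-1}:=1$), and has truncated ARL $\operatorname{ARL}_N(\mathbf y)+1$.
\item Directly from the definitions, $A'_\nu=q_{\nu-1}+A_\nu$. The extra $n=\nu$ term is $\mathbb E_\nu[y_{\nu-1}]=q_{\nu-1}$, since $y_{\nu-1}$ depends only on pre-change data.
\end{itemize}

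\emph{Mixture and crossing bound.} Take the convex combination $\mathbf z=\theta\mathbf y'+(1-\theta)\mathbf y$, which is admissible because $\mathcal Y_{N+1}$ is convex. Assume $\theta\le1$, which holds for large $N$. Its truncated ARL is at least $\gamma-\theta+\theta=\gamma$. For the constraints, I will check
\begin{align*}
\theta q_{\nu-1}+A_\nu \le \theta q_{\nu-1}+r q_\nu \le (r+\theta)\bigl(\theta q_{\nu-1}+(1-\theta)q_\nu\bigr).
\end{align*}
The last inequality follows from $q_\nu\le q_{\nu-1}$. Expanding the right-hand side gives $\theta^2q_{\nu-1}+r\theta q_{\nu-1}+(r+\theta)(1-\theta)q_\nu$, and this is at least $\theta q_{\nu-1}\cdot\theta+r\theta q_\nu+r(1-\theta)q_\nu\ge \theta^2 q_{\nu-1}+rq_\nu$. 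So the only remaining piece is the bookkeeping of the leftover $\theta q_{\nu-1}$ term against $\theta^2q_{\nu-1}+r\theta q_{\nu-1}$; this uses $r\ge1$ and needs to be checked carefully. The constraint at $\nu=N$ must also be handled: it is trivial because there $q_\nu=0$ for $\mathbf y$ and $A'_N=q_{N-1}=q'_N$. Granting this, $\mathbf z$ is feasible for the length-$(N+1)$ problem at level $r+\theta$. Hence $\Gamma_{N+1}(r_\gamma+\varepsilon_N(r_\gamma))\ge\gamma$, and so $r_{\gamma,N+1}\le r_\gamma+\varepsilon_N(r_\gamma)$.

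\emph{Admissible rules.} By Lemma~\ref{lem:finite-crossing} the minimum defining $r_{\gamma,N}$ is attained and satisfies $r_{\gamma,N}>1$. Take a truncated optimizer with $\operatorname{ARL}_N\ge\gamma$ and extend it by zero; it is an extended rule at level $r_{\gamma,N}$. Lemma~\ref{lem:tail-completion} makes its survival strictly positive, preserving the ARL and giving $J_{\sf P}\le r_{\gamma,N}$. Lemma~\ref{lem:arl-calibration} then calibrates the ARL to exactly $\gamma$. Combined with $J_{\sf P}(T_{\gamma,N})\ge V_{\sf P}(\gamma)$, this gives the final sandwich.

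The main obstacle is the constraint verification for the mixture: choosing the shift so that $A'_\nu$ has the clean form $q_{\nu-1}+A_\nu$, and confirming that the risk inflation is exactly $\theta$ rather than something like $\theta r$. If the inequality above only gives $r+c\theta$, the stated rate would hold only up to a constant, and I would instead try shifting by inserting a non-stopping step that reads the new observation.
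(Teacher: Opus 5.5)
Your proof is correct, and the step you flagged does close with no loss of constant. With $\theta=\varepsilon_N(r)$ and $A_\nu\le rq_\nu$, the exact difference is
\begin{align*}
(r+\theta)\bigl(\theta q_{\nu-1}+(1-\theta)q_\nu\bigr)-\bigl(\theta q_{\nu-1}+rq_\nu\bigr)
=\theta\,(r+\theta-1)\,(q_{\nu-1}-q_\nu)\ \ge\ 0 .
\end{align*}
So your mixture $\mathbf z$ is feasible at level $r+\theta$ for every $0\le\nu\le N$, with $q_{-1}=1$ covering $\nu=0$. Your expansion stalled because you dropped the $\theta(1-\theta)q_\nu$ term and traded $r\theta q_{\nu-1}$ for $r\theta q_\nu$ separately. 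The correct bookkeeping is joint. The $q_{\nu-1}$-side has a surplus of $\theta(r+\theta-1)q_{\nu-1}$, and the $q_\nu$-side has a deficit of $\theta(r+\theta-1)q_\nu$. The surplus covers the deficit precisely because $q$ is nonincreasing and $r\ge1$.

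Your route differs from the paper's in the choice of shifted copy. You delay decisions by one step on the same data, $y'_n(x_{1:n})=y_{n-1}(x_{1:n-1})$. The paper instead forces continuation at time $1$, ignores $X_1$, and runs the truncated rule on $X_2,X_3,\ldots$, i.e.\ $y^S_n(x_{1:n})=y_{n-1}(x_{2:n})$. A change after $\nu$ in the original stream is a change after $\nu-1$ in the shifted stream. Hence $q_\nu(S)=q_{\nu-1}$ and $A_\nu(S)=A_{\nu-1}$ for $\nu\ge1$.

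What each approach buys:
\begin{itemize}
\item In the paper's version, every row $\nu\ge1$ of the mixture is inherited at level $r_\gamma$ by linearity. Only row $0$ is inflated, by $+1$ times the mixing weight, so no algebra is needed.
\item Your identity $A'_\nu=q_{\nu-1}+A_\nu$ is quicker to obtain, since it needs only that $y_{\nu-1}$ is pre-change measurable. However, it inflates every row: on its own your delayed copy can have conditional delay up to $1+r$. Feasibility of the mixture therefore genuinely depends on the identity above.
\end{itemize}
The alternative you mention at the end, inserting a forced non-stopping step, is essentially the paper's construction.

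Two other differences are harmless:
\begin{itemize}
\item The paper truncates the calibrated minimizer $T_\gamma^\star$ and mixes with the exact lost ARL $e_N\le\varepsilon_N(r_\gamma)$. You truncate a frontier optimizer and mix with $\varepsilon_N(r_\gamma)$ itself, which suffices because only $\operatorname{ARL}_{N+1}\ge\gamma$ is needed.
\item You should note that if $r_\gamma+\varepsilon_N(r_\gamma)>\gamma$, the bound holds trivially from $r_{\gamma,N+1}\le\gamma$ (the paper isolates the case $r_\gamma=\gamma$).
\end{itemize}
In the rule-construction part, Lemma~\ref{lem:tail-completion} needs the first zero-survival time to be at least $2$. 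This follows from $\operatorname{ARL}_N\ge\gamma>1$.
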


\begin{proof}
See Appendix~\ref{app:finite-convergence}.
\end{proof}
This is the main approximation result. At every finite truncation order,
$r_{\gamma,N}$ is an upper bound on the exact Pollak value, and these
bounds improve monotonically to $V_{\sf P}(\gamma)$. Importantly, the
approximation applies not only to the optimal values: each truncated
optimizer can be converted, using Lemmas~\ref{lem:tail-completion} and
\ref{lem:arl-calibration}, into an admissible rule whose Pollak risk
approaches the unrestricted optimum. Thus, for every $\epsilon>0$, there exists a sufficiently large $N$ such that
\begin{align}
0
\le
r_{\gamma,N}-V_{\sf P}(\gamma)
<
\epsilon
\; .
\end{align}
We emphasize that this conclusion does not depend on any particular parametric family of stopping rules. The truncated problems retain the complete survival-process formulation, so the convergence is with respect to the same unrestricted class of randomized, history-dependent procedures used in the exact minimax problem. 

\begin{example}[Gaussian mean shift]
Consider
\begin{align}
P_\infty
=
\mathcal N(0,1)
\qquad \mbox{and} \qquad
P_0
=
\mathcal N(\theta,1)
\; .
\end{align}
\end{example}
The finite-truncation convergence rate in
Theorem~\ref{thm:finite-convergence} can be made explicit in this
model.  For a prescribed false-alarm level $\gamma>1$, define
\begin{align}
m_\gamma
:=
\left\lceil 2\gamma-1\right\rceil
\qquad \mbox{and} \qquad
\delta_{\gamma,\theta}
:=
\Phi\left(
-|\theta|\sqrt{m_\gamma}
\right)
\; ,
\label{eq:gaussian-convergence-constants}
\end{align}
where $\Phi$ denotes the standard Gaussian cumulative distribution function.
Then, for every fixed $\gamma$ and $\theta\ne0$, the finite-truncation approximation error decays exponentially with the truncation order $N$. Specifically, for sufficiently large $N$ we have
\begin{align}
r_{\gamma,N+1}-V_{\sf P}(\gamma)
\le
\frac{2m_\gamma}{\delta_{\gamma,\theta}}
\exp\left\{
-\frac{\delta_{\gamma,\theta}}{m_\gamma}N
\right\}
\; .
\label{eq:gaussian-exponential-convergence}
\end{align}
For example, let $\gamma=5$ and $\theta=0.5$.  Then $m=9$ and  $\delta_{\gamma,\theta}=\Phi(-1.5)$. To guarantee an approximation error below $10^{-2}$, it is
sufficient from \eqref{eq:gaussian-exponential-convergence} to choose
$N \ge 1375$.

\section{Structure of Truncated Optimal Rules}
\label{sec:finite-dual}

The truncated formulation in Section~\ref{sec:truncated} yields a sequence of optimization problems whose values converge to the exact Pollak minimax value. It does not, however, immediately reveal the structure of the corresponding optimal stopping rules. Even for a fixed truncation order $N$, the primal variables consist of survival functions defined over all possible observation histories.  The purpose of this section is to obtain an explicit sequential
representation of these truncated optimizers. Fix a truncation order
$N$ and a candidate risk level $r$. Associate with each change-time
constraint
\begin{align}
A_\nu^{[N]}(\mathbf y)
\le
r q_\nu(\mathbf y)
\; ,
\qquad
0\le \nu<N
\; ,
\end{align}
a nonnegative weight $\omega_\nu\ge0$. Recall that $\Lambda_n$ is the one-observation likelihood ratio. The weighted contributions of
all possible change times up to time $n$ can then be summarized by
\begin{align}
\label{eq:Mn}
M_n := \sum_{\nu=0}^{n}
\omega_\nu L_{\nu,n}
\; , \qquad L_{\nu,n} :=
\prod_{j=\nu+1}^{n}\Lambda_j
\; , 
\end{align}
with $L_{n,n}=1$. Consequently, $M_0=\omega_0$ and
\begin{align}
M_n
=
\omega_n+\Lambda_n M_{n-1}
\; ,
\qquad
1\le n<N
\; .
\label{eq:dual-statistic-preview}
\end{align}
Thus, although the truncated optimization contains one constraint for
each possible change time, their cumulative effect can be represented
sequentially by a single scalar state. A backward dynamic program then shows that an optimal truncated decision is obtained by comparing $M_n$ with a time-dependent boundary.
\begin{remark}
    We highlight that this representation is derived from the unrestricted truncated optimization. No SR structure is assumed in advance, and the SR recursion emerges as a special case in which the dual weights and stopping boundaries become constant.
\end{remark}

\subsection{Dual Representation and the Weighted Likelihood Statistic}

Fix $N\ge2$ and $r>1$. Recall that the truncated problem maximizes $\operatorname{ARL}_N(\mathbf y)$ subject to
\begin{align}
A_\nu^{[N]}(\mathbf y)
\le
r q_\nu(\mathbf y) \; ,
\qquad
0\le\nu<N
\; .
\label{eq:dual-primal-constraints}
\end{align}
Assign a nonnegative weight $\omega_\nu$ with each of these constraints and let
$\boldsymbol\omega=(\omega_0,\ldots,\omega_{N-1})$. The corresponding Lagrangian is
\begin{align}
\mathcal L_N(\mathbf y,\boldsymbol\omega)
:=
\operatorname{ARL}_N(\mathbf y)
+
\sum_{\nu=0}^{N-1}
\omega_\nu
\left[
r q_\nu(\mathbf y)
-
A_\nu^{[N]}(\mathbf y)
\right]
\; .
\label{eq:dual-lagrangian}
\end{align}
Recall that
$\operatorname{ARL}_N(\mathbf y)=\sum_{n=0}^{N-1}q_n(\mathbf y)$, based on which from \eqref{eq:dual-lagrangian} we get
\begin{align}
\mathcal L_N(\mathbf y,\boldsymbol\omega)
&=
\sum_{n=0}^{N-1}\Einf[y_n]
+
r\sum_{\nu=0}^{N-1}\omega_\nu\Einf[y_\nu]
-
\sum_{\nu=0}^{N-1}
\omega_\nu
\sum_{n=\nu}^{N-1}
\Einf
\left[
L_{\nu,n}y_n
\right]
\; .
\label{eq:dual-lagrangian-expanded-1}
\end{align}
The first two terms already separate according to the observation time.
For the last term, we interchange the order of summation. At a fixed
time $n$, every possible change time $\nu\le n$ contributes the
quantity $\omega_\nu L_{\nu,n}$. Hence
\begin{align}
\sum_{\nu=0}^{N-1}
\omega_\nu
\sum_{n=\nu}^{N-1}
\Einf
\left[
L_{\nu,n}y_n
\right]
&=
\sum_{n=0}^{N-1}
\Einf
\left[
\left(
\sum_{\nu=0}^{n}
\omega_\nu L_{\nu,n}
\right)y_n
\right]
\; .
\label{eq:dual-sum-rearrangement}
\end{align}
This rearrangement identifies the quantity that summarizes, at time
$n$, the contributions of all possible change times up to $n$.
By recalling the definition of $M_n$ in \eqref{eq:Mn}, the Lagrangian becomes
\begin{align}
\mathcal L_N(\mathbf y,\boldsymbol\omega)
&=
1+(r-1)\omega_0
+
\sum_{n=1}^{N-1}
\Einf
\left[
\bigl(
1+r\omega_n-M_n
\bigr)y_n
\right]
\; ,
\label{eq:dual-lagrangian-expanded}
\end{align}
where the equality uses $y_0=1$, $L_{0,0}=1$, and hence
$M_0=\omega_0$. 
The importance of $M_n$ is that it can be updated recursively rather
than recomputed from the entire collection of past likelihood ratios.
For every $\nu<n$,
$L_{\nu,n}=L_{\nu,n-1}\Lambda_n$, while $L_{n,n}=1$. Therefore,
\begin{align}
M_n
&=
\omega_n
+
\sum_{\nu=0}^{n-1}
\omega_\nu L_{\nu,n-1}\Lambda_n
=
\omega_n+\Lambda_nM_{n-1}
\; .
\label{eq:dual-statistic}
\end{align}
Thus, the statistic $M_n$  arises directly by regrouping the weighted change-time
constraints according to the current observation time. Once the dual weights are fixed, all contributions from past change times are summarized by a single, recursively updated scalar state. 

\subsection{Duality and Threshold Structure}

For fixed $\boldsymbol\omega$, maximizing
\eqref{eq:dual-lagrangian-expanded} over the survival process is a truncated optimal stopping problem. Set $W_N(m)=0$ and, working backward for $n=N-1,\ldots,1$, define
\begin{align}
W_n(m)
=
\left[
1+r\omega_n-m
+
\Einf
W_{n+1}
\bigl(
\omega_{n+1}+\Lambda m
\bigr)
\right]_+
\; ,
\label{eq:dual-bellman}
\end{align}
where we set $\omega_N=0$, and $\Lambda$ denotes a generic one-observation likelihood ratio under $P_\infty$. For convenience, define the continuation advantage
\begin{align}
H_n(m)
:=
1+r\omega_n-m
+
\Einf
W_{n+1}
\bigl(
\omega_{n+1}+\Lambda m
\bigr)
\; .
\label{eq:dual-continuation}
\end{align}
Thus, continuation is preferable when $H_n(M_n)>0$, whereas stopping is preferable when $H_n(M_n)<0$.
The corresponding dual objective is
\begin{align}
\mathfrak D_N(\boldsymbol\omega)
:=
1+(r-1)\omega_0
+
\Einf
W_1
\bigl(
\omega_1+\Lambda_1\omega_0
\bigr)
\; .
\label{eq:dual-objective}
\end{align}
The following theorem collects the principal consequences of the dual formulation. 

\begin{theorem}[Truncated dual characterization]
\label{thm:finite-dual-structure}

For every $N\ge2$ and $r>1$,
\begin{align}
\Gamma_N(r)
&=
\min_{\boldsymbol\omega\ge0}
\mathfrak D_N(\boldsymbol\omega)
\; .
\label{eq:finite-strong-duality}
\end{align}
The primal maximum and the dual minimum are both achieved. For fixed $\boldsymbol\omega\ge0$ and $1\le n<N$, let
$b_{n,N}(\boldsymbol\omega)$ denote the unique zero of the
continuation-advantage function $H_n$ generated by
$\boldsymbol\omega$. Let $\boldsymbol\omega^\star$ be a minimizing
dual vector, define $M_n^\star$ according to
\eqref{eq:dual-statistic}, and set
\begin{align*}
b_{n,N}
:=
b_{n,N}(\boldsymbol\omega^\star)
\; .
\end{align*}
Then there exists a single primal optimizer $\mathbf y^\star$
forming a saddle pair with $\boldsymbol\omega^\star$ such that, for
every $1\le n<N$, $P_\infty^n$-almost surely on histories satisfying
\begin{align*}
y_{n-1}^\star(x_{1:n-1})
>
0
\; ,
\end{align*}
its stopping rule satisfies
\begin{align}
M_n^\star
< b_{n,N} \quad\Longrightarrow\quad
\text{continue} \qquad \mbox{vs.} \qquad 
M_n^\star
> b_{n,N} \quad\Longrightarrow\quad
\text{stop}
\; .
\label{eq:dual-threshold-rule}
\end{align}
In the tie set
$M_n^\star=b_{n,N}$, the rule uses the measurable, possibly
history-dependent continuation kernel inherited from
$\mathbf y^\star$, which preserves primal feasibility and
optimality. An arbitrary tie probability is not asserted to preserve these properties. 
Furthermore,
\begin{align}
\omega_\nu^\star
\left[
r q_\nu(\mathbf y^\star)
-
A_\nu^{[N]}(\mathbf y^\star)
\right]
&=
0
\; ,
\qquad
0\le\nu<N
\; .
\label{eq:dual-complementarity}
\end{align}

\end{theorem}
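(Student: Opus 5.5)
The plan is to treat the truncated problem as a linear program in the survival functions, with $N$ scalar inequality constraints, and to prove strong duality by a finite-dimensional minimax argument.

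\emph{Weak duality and the inner problem.} For any feasible $\mathbf y$ and any $\boldsymbol\omega\ge0$ we have $\operatorname{ARL}_N(\mathbf y)\le\mathcal L_N(\mathbf y,\boldsymbol\omega)$. I will show $\sup_{\mathbf y\in\mathcal Y_N}\mathcal L_N(\mathbf y,\boldsymbol\omega)=\mathfrak D_N(\boldsymbol\omega)$ using the expanded form \eqref{eq:dual-lagrangian-expanded}. Maximizing $\sum_n\Einf[(1+r\omega_n-M_n)y_n]$ over $0\le y_n\le y_{n-1}$ is a finite-horizon optimal stopping problem under $\Pinf$ with running reward $1+r\omega_n-M_n$. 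The state $M_n$ is Markov by \eqref{eq:dual-statistic}, since the $\Lambda_n$ are i.i.d.\ under $\Pinf$. Backward induction then gives the value functions $W_n$ of \eqref{eq:dual-bellman}. Randomization cannot beat the deterministic optimum, because the problem is linear in $y_n$ given $y_{n-1}$. So $\Gamma_N(r)\le\inf_{\boldsymbol\omega}\mathfrak D_N$.

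\emph{Strong duality and attainment.} Primal attainment comes from Lemma~\ref{lem:survival-compact} applied to finitely many coordinates, since the objective and constraints are weak-* continuous. For the equality I will use a Slater point: the geometric rule truncated at $N$, with its tail completed, strictly satisfies all constraints. One candidate is an observation-independent deadline whose conditional residual mean is below $r$ at every $\nu$, which works because $r>1$. The perturbation (value) function $u\mapsto\sup\{\operatorname{ARL}_N:A^{[N]}_\nu-rq_\nu\le u_\nu\}$ is concave on $\mathbb R^N$ and finite near $0$. Hence a supergradient exists, which yields the minimizing $\boldsymbol\omega^\star\ge0$ and no duality gap.

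Alternatively, apply Sion's minimax theorem to the convex–concave bilinear $\mathcal L_N$ on the weak-* compact convex set $\mathcal Y_N$. Slater's condition bounds the dual minimizers: $\mathfrak D_N(\boldsymbol\omega)\ge\mathcal L_N(\mathbf y^{\rm S},\boldsymbol\omega)\ge 1+c\sum\omega_\nu$ for some $c>0$. The dual is therefore coercive, and since it is convex and lower semicontinuous its minimum is attained.

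\emph{Saddle pair, complementarity, threshold.} With zero gap, any primal optimizer $\mathbf y^\star$ and $\boldsymbol\omega^\star$ form a saddle pair. Then $\operatorname{ARL}_N(\mathbf y^\star)=\mathcal L_N(\mathbf y^\star,\boldsymbol\omega^\star)$, which gives \eqref{eq:dual-complementarity}. Also $\mathbf y^\star$ maximizes $\mathcal L_N(\cdot,\boldsymbol\omega^\star)$, so on histories with $y^\star_{n-1}>0$ it must continue a.s.\ where $H_n(M_n^\star)>0$ and stop where $H_n(M_n^\star)<0$. Otherwise one could strictly improve the Lagrangian by switching on a set of positive $\Pinf$-mass.

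It remains to show $H_n$ is strictly decreasing with a unique zero, by backward induction. If $W_{n+1}$ is nonincreasing and $1$-Lipschitz in a suitable weighted sense, then $m\mapsto\Einf W_{n+1}(\omega_{n+1}+\Lambda m)$ has slope at least $-\Einf\Lambda=-1$. Combined with the $-m$ term this makes $H_n$ nonincreasing, and the tie set needs separate handling. I expect the main obstacle to be precisely this strictness, i.e.\ proving $H_n$ has a unique zero. My first attempt is to show the slope of $m\mapsto\Einf W_{n+1}(\cdot)$ is $>-1$, because $W_{n+1}$ is flat (zero) for large arguments and $\Lambda$ is nondegenerate under mutual absolute continuity. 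Then $H_n$ is strictly decreasing, and it ranges from $\ge1+r\omega_n>0$ at $m=0$ to $-\infty$, giving existence and uniqueness of $b_{n,N}$. On the tie set I will simply retain $\mathbf y^\star$'s kernel, as the statement allows.
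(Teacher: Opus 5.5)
Your overall architecture is sound and follows the paper's proof. You regroup the Lagrangian through $M_n$ and identify $\sup_{\mathbf y}\mathcal L_N(\mathbf y,\boldsymbol\omega)$ with $\mathfrak D_N(\boldsymbol\omega)$ via the backward recursion $W_n=[H_n]_+$. You use a Slater point; the paper takes $y_n=a^n$ with $0<a<1-1/r$, which is your truncated geometric rule, and no tail completion is needed in the truncated problem. You then obtain zero gap and dual attainment. The paper does this with a supporting hyperplane to the closed convex lower image, which is equivalent to your supergradient of the perturbation function; Sion plus coercivity also works. Finally, complementary slackness and the sign rule for $H_n(M_n^\star)$ come from the saddle property.

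The genuine gap is in the step you flag as the main obstacle, uniqueness of the root $b_{n,N}$: your monotonicity reasoning runs in the wrong direction. A \emph{lower} bound ``slope at least $-1$'' on $m\mapsto\mathbb E_\infty W_{n+1}(\omega_{n+1}+\Lambda m)$ says nothing about whether $H_n$ is monotone. Strict decrease of $H_n=1+r\omega_n-m+\mathbb E_\infty W_{n+1}(\omega_{n+1}+\Lambda m)$ only needs this term to have slope \emph{below} $+1$, not above $-1$.

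Your planned induction also fails on its own terms: the 1-Lipschitz hypothesis does not propagate, and the target ``slope $>-1$'' is false. From $W_{N-1}(m)=[1+r\omega_{N-1}-m]_+$ one gets $H_{N-2}'(0^+)=-1-\mathbb E_\infty\Lambda=-2$ with $H_{N-2}(0)>0$. So $W_{N-2}$ has slope $-2$ near the origin. Taking $\omega_{N-2}=0$, the expectation term of $H_{N-3}$ then has slope $-2$ at $m=0^+$. Flatness of $W_{n+1}$ at large arguments and nondegeneracy of $\Lambda$ play no role.

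The fix, which is the paper's Step 4, is to carry only the hypothesis ``$W_{n+1}$ is bounded, continuous and nonincreasing on $[0,\infty)$'' through the backward induction. Since $\Lambda\ge0$, the expectation term is then nonincreasing in $m$, so $H_n(m_2)-H_n(m_1)\le-(m_2-m_1)<0$ for $m_2>m_1$. $H_n$ is continuous by bounded convergence, $H_n(0)=1+r\omega_n+W_{n+1}(\omega_{n+1})\ge1$, and $H_n(m)\le 1+r\omega_n+W_{n+1}(0)-m\to-\infty$. Hence $H_n$ has a unique zero, and $W_n=[H_n]_+$ inherits the hypothesis.

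This matters beyond the definition of $b_{n,N}$. Your saddle argument gives ``continue on $\{H_n(M_n^\star)>0\}$, stop on $\{H_n(M_n^\star)<0\}$''. These sets equal $\{M_n^\star<b_{n,N}\}$ and $\{M_n^\star>b_{n,N}\}$ only because $H_n$ is strictly decreasing. With mere monotonicity there could be a whole interval of ties, and the single-boundary threshold form \eqref{eq:dual-threshold-rule} would not follow.
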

\begin{proof}
See Appendix~\ref{app:finite-dual-threshold}.
\end{proof}
The scalar threshold representation is exact off boundary ties. If
$M_n^\star=b_{n,N}$ has positive probability, the optimizer retains the
saddle-compatible, possibly history-dependent continuation kernel specified
in Theorem~\ref{thm:finite-dual-structure}; when
$P_\infty(M_n^\star=b_{n,N})=0$, the rule is therefore a scalar threshold rule almost surely. 
Two further consequences will be useful. First, for $\boldsymbol\omega\ge0$,
\begin{align}
\Gamma_N(r) \le \mathfrak D_N(\boldsymbol\omega)
\; ,
\label{eq:dual-any-vector-upper-bound}
\end{align}
so any dual vector provides an upper bound on the truncated frontier.
Second, complementary slackness implies that
\begin{align}
\omega_\nu^\star>0
\quad\Longrightarrow\quad
A_\nu^{[N]}(\mathbf y^\star)
=
r q_\nu(\mathbf y^\star)
\; .
\end{align}
Hence, whenever $q_\nu(\mathbf y^\star)>0$, a positive dual weight identifies a change time whose conditional delay achieves the prescribed risk level $r$. 
We note that because the dynamic program is solved backward from the terminal order $N$, the boundaries $b_{n,N}$ are generally time-dependent. Thus, for each fixed $N$, there exists a saddle-compatible truncated optimizer whose off-boundary decisions have a time-inhomogeneous
weighted likelihood-ratio threshold form.

\vspace{.05 in} \noindent \textbf{Relation to the SR Recursion.}
The recursion in \eqref{eq:dual-statistic} is closely related to the classical SR statistic. To see the connection, suppose that the dual injections after time zero are constant,
\begin{align}
\omega_n
=
\omega>0 \; ,
\qquad
1\le n<N \; ,
\label{eq:constant-dual-weights}
\end{align}
and that the corresponding stopping boundaries are also constant, $b_{n,N} = b$. Define
\begin{align}
R_n
:=
\frac{M_n}{\omega}-1
\; \quad \Rightarrow \quad R_n
&=
\frac{\omega+\Lambda_nM_{n-1}}{\omega}-1
=
(1+R_{n-1})\Lambda_n
\; ,
\label{eq:dual-sr-rescaling}
\end{align}
which is exactly the SR recursion. 
In particular, when $\omega\le\omega_0<b$, this gives a conventional
nonnegative, below-threshold head-started SR rule. The significance of
this connection is the direction in which it is obtained. We did not
restrict the finite-horizon optimization to the SR family. Instead, the
weighted recursion in \eqref{eq:dual-statistic}
emerged from dualizing the unrestricted survival-process problem. When the dual injections and stopping boundaries are constant, its
off-boundary recursion reduces to the classical SR recursion, with
deterministic initialization yielding the ordinary SR procedure and its
head-started SR-$r$ variants.

More generally, the randomized stopping-rule class considered in this
paper also contains the SRP procedure. Indeed, the auxiliary
randomization available at initialization permits the initial SR
statistic to be drawn from its quasi-stationary distribution below the
detection threshold, after which the same constant-boundary SR recursion
is applied. Thus, SRP is contained in the unrestricted randomized
framework, although its quasi-stationary initialization is not itself
derived from the constant-dual-parameter specialization above.

In general, the optimal finite-horizon dual injections and boundaries need not be constant. The finite-horizon formulation therefore permits
time-varying dual injections and time-varying boundaries, yielding a broader class of sequential rules than the classical constant-boundary
SR family.

\section{Likelihood-Ratio Floors: Explicit Lower Bounds and Exact Solutions}
\label{sec:likelihood-floor}

The results developed so far apply to general equivalent pre- and post-change laws. In this section, we impose additional
structure on the observation model and ask whether it can lead directly to an explicit lower bound on the Pollak risk. The relevant structure is
a pointwise lower bound on the one-observation likelihood ratio. If the likelihood ratio can never fall below a constant $c$, then the
likelihood ratio accumulated over any $k$ post-change observations can never fall below $c^k$. This allows every post-change survival probability to be bounded from below using only the corresponding
pre-change survival probability.

The important consequence is that the observation histories can be eliminated from the lower-bound argument. The stochastic detection problem reduces to a deterministic inequality involving the scalar
no-change survival sequence
$\{q_n(T):n\in\mathbb N_0\}$. This yields an explicit lower bound valid for the entire class of admissible stopping rules. 
Even more importantly, tracing the conditions for equality reveals when this bound can be achieved. Equality determines both how much survival
probability the detector must retain and on which observations that survival probability can be retained. When the likelihood-ratio floor is achieved on a sufficiently large set, these conditions lead to
an explicit stationary randomized optimal rule.

Unlike the general existence theory developed earlier, the results in
this section do not require mutual absolute continuity unless stated
otherwise. It is sufficient that $P_0\ll P_\infty$ so that the likelihood ratio is defined. We begin with the additional distributional assumption.

\begin{assumption}[Likelihood-ratio floor]
\label{ass:likelihood-floor}
There exists $c\in[0,1)$ such that
\begin{align}
\Lambda(X)
=
\frac{\mathrm dP_0}{\mathrm dP_\infty}(X)
\ge
c
\qquad
P_\infty\text{-almost surely}
\; .
\label{eq:likelihood-floor}
\end{align}
\end{assumption}
A strictly positive likelihood-ratio floor arises naturally in
finite-alphabet models with common support, contamination models, many bounded-support families, and several one-sided parameter changes such as
Poisson rate increases, exponential rate decreases, and Gaussian variance increases. A particularly transparent example is the mixture model when $Q\ll P_\infty$:
\begin{align}
P_0 &= cP_\infty+(1-c)Q
\; , \qquad 0<c<1
\; .
\end{align}
\begin{theorem}[Likelihood-floor lower bound]
\label{thm:floor-bound}
Suppose Assumption~\ref{ass:likelihood-floor} holds. Then every
admissible stopping rule satisfying $\Einf[T]\ge\gamma>1$ obeys
\begin{align}
J_{\sf P}(T)
\ge
R_c(\gamma)
:=
\frac{\gamma}
{c+(1-c)\gamma}
\; .
\label{eq:floor-lower-bound}
\end{align}
Consequently,
\begin{align}
V_{\sf P}(\gamma)
\ge
\frac{\gamma}
{c+(1-c)\gamma}
\; .
\label{eq:floor-value-lower-bound}
\end{align}
\end{theorem}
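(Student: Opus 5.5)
Throughout, let $r:=J_{\sf P}(T)$. If $r=\infty$ there is nothing to prove, so assume $r<\infty$; then $q_\nu(T)>0$ and $A_\nu(T)\le r\,q_\nu(T)$ for every $\nu\in\mathbb N_0$. The plan is to replace the history-dependent quantities by the scalar sequence $q_n:=q_n(T)$ using the floor, and then to solve a deterministic recursive inequality.

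\emph{Step 1 (eliminating the histories).} By \eqref{eq:survival-under-nu} and a change of measure, $\mathbb P_\nu(T>n)=\Einf[L_{\nu,n}\,y_n(X_{1:n})]$. Assumption~\ref{ass:likelihood-floor} gives $L_{\nu,n}\ge c^{\,n-\nu}$ $\Pinf$-a.s., hence $\mathbb P_\nu(T>n)\ge c^{\,n-\nu}q_n$. Define
\begin{align*}
S_\nu:=\sum_{n\ge\nu}c^{\,n-\nu}q_n .
\end{align*}
Then $S_\nu\le A_\nu(T)\le r\,q_\nu$. In particular each $S_\nu$ is finite, and $S_0\le r$ because $q_0=1$.

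\emph{Step 2 (a one-step recursion).} All terms are nonnegative, so the rearrangement $S_\nu=q_\nu+c\,S_{\nu+1}$ is legitimate. Substituting $q_\nu=S_\nu-cS_{\nu+1}$ into $S_\nu\le r q_\nu$ yields $c\,r\,S_{\nu+1}\le (r-1)S_\nu$.

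For $c>0$ this reads $S_{\nu+1}\le\rho\,S_\nu$ with $\rho:=(r-1)/(cr)$, and iterating gives $S_n\le r\rho^n$. The case $c=0$ needs no argument, since then $R_0(\gamma)=1\le r$ trivially.

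\emph{Step 3 (bounding the ARL).} Again using nonnegativity to regroup,
\begin{align*}
\gamma\le\Einf[T]=\sum_{n\ge0}q_n=\sum_{n\ge0}(S_n-cS_{n+1})=S_0+(1-c)\sum_{n\ge1}S_n .
\end{align*}
Split into two cases according to the size of $\rho$.

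\begin{itemize}
\item If $\rho\ge1$, i.e.\ $r\ge 1/(1-c)$, the claim holds trivially. Indeed $R_c(\gamma)=\gamma/(c+(1-c)\gamma)<1/(1-c)$ for $c<1$, so $r\ge R_c(\gamma)$.
\item If $\rho<1$, i.e.\ $(1-c)r<1$, then
\begin{align*}
\gamma\le r\Bigl[1+(1-c)\frac{\rho}{1-\rho}\Bigr]=\frac{c\,r}{1-(1-c)r},
\end{align*}
using $\rho/(1-\rho)=(r-1)/(1-(1-c)r)$. Rearranging gives $\gamma\bigl(1-(1-c)r\bigr)\le cr$, that is, $r\bigl(c+(1-c)\gamma\bigr)\ge\gamma$, which is \eqref{eq:floor-lower-bound}.
\end{itemize}

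Taking the infimum over all admissible $T$ with $\Einf[T]\ge\gamma$ then gives \eqref{eq:floor-value-lower-bound}.

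The main obstacle is Step 2: finding the right scalar potential. A naive use of only the $\nu=0$ constraint, or of the crude bound $A_\nu\ge q_\nu+c\,q_{\nu+1}$, is not sharp. The discounted tail sums $S_\nu$ are what turn the infinite family of fixed-risk constraints into a geometric contraction. Tracking where equality holds in Steps 1–3 (namely $L_{\nu,n}=c^{n-\nu}$ on the surviving set, and all constraints tight) should also indicate the structure of the attaining rule used later in the section.
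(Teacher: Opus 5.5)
Your proof is correct and takes essentially the same route as the paper. You use the floor to get $\mathbb P_\nu(T>n)\ge c^{\,n-\nu}q_n$, pass to the discounted tail sums via $S_\nu=q_\nu+cS_{\nu+1}$, derive the contraction $S_{\nu+1}\le\rho S_\nu$ with $\rho=(r-1)/(cr)$, and then bound the ARL after splitting at $r=1/(1-c)$. The only cosmetic differences are that you read $S_0\le r$ directly off the $\nu=0$ constraint where the paper derives it from $q_0=1$, and that you justify the ARL identity by Tonelli regrouping rather than finite telescoping with $s_{m+1}\to0$.
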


\begin{proof}
See Appendix~\ref{app:floor-bound}.
\end{proof}
The endpoint $c=0$ gives the trivial bound $J_{\sf P}(T)\ge1$, since the
residual delay after any surviving change is a positive integer. Theorem~\ref{thm:floor-bound} differs from the frontier lower bound
developed earlier. The frontier characterizes the exact value but requires solving an optimization problem. In contrast, \eqref{eq:floor-lower-bound} is explicit and depends on the observation
model only through the scalar likelihood-ratio floor $c$. It is also
an all-rules bound: no statistic, threshold structure, Markov representation, or specific family of detection procedures has
been imposed. 
We next identify conditions under which the lower bound in
Theorem~\ref{thm:floor-bound} is achieved. Define 
\begin{align}
    \rho:=1-\frac{1}{\gamma}\ , \qquad \mbox{and} \qquad B := \left\{ x\in\mathsf X: \Lambda(x)=c \right\}\ , \;\; \mbox{where} \;\; \beta:=P_\infty(B)\ .
\end{align}
Thus, $P_0(B)=c\beta$. We show that achieving the lower bound requires geometric pre-change survival with one-step continuation probability $\rho$, with continuation occurring only on the likelihood-ratio floor. Hence, if $\beta\ge\rho$, the floor set contains sufficient probability mass to implement this survival behavior. This is formalized in the following theorem.

\begin{theorem}[Sharpness under an achieved likelihood-ratio floor]
\label{thm:floor-sharpness}
Suppose Assumption~\ref{ass:likelihood-floor} holds, the floor set
$B$ satisfies $\beta=P_\infty(B)>0$, and $\rho = 1-\frac1\gamma \le \beta$. Consider the stationary randomized stopping rule $T_c^\star$ that,
at every time $n$, continues according to
\begin{align}
\text{continue at time }n
\quad\Longleftrightarrow\quad
X_n\in B
\ \text{ and }\
U_n\le\frac{\rho}{\beta}
\; .
\label{eq:floor-optimal-rule}
\end{align}
\begin{align}
\Einf[T_c^\star]
=
\gamma \; , \quad \mbox{and} \quad V_{\sf P}(\gamma) & = D_\nu(T_c^\star)
= \frac{1}{1-c\rho}
= \frac{\gamma}
{c+(1-c)\gamma} \; ,
\qquad \nu\ge 0
\; .
\label{eq:floor-rule-equalizer}
\end{align}
\end{theorem}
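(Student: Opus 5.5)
The plan is to compute directly the ARL and every conditional delay of $T_c^\star$, and then to invoke the lower bound of Theorem~\ref{thm:floor-bound} to conclude optimality. We may assume that $T_c^\star$ uses $U_0$ to always continue at time $0$, so that $y_0=1$. The rule is time-homogeneous and memoryless: at each $n\ge1$ it continues independently with probability $\mathbf 1_B(X_n)\,\rho/\beta$. Its survival process is therefore
\begin{align*}
y_n(x_{1:n})=\prod_{j=1}^n \frac{\rho}{\beta}\mathbf 1_B(x_j) .
\end{align*}
This process lies in $\mathcal Y$ because $\rho/\beta\le1$ by the hypothesis $\rho\le\beta$. Lemma~\ref{lemma:survival-completeness} then confirms that it is a legitimate randomized rule.

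Under $\mathbb P_\infty$ the factors are i.i.d. with mean $\beta\cdot\rho/\beta=\rho$, so $q_n=\rho^n>0$ for every $n$. Hence the rule is admissible, and $\Einf[T_c^\star]=\sum_n\rho^n=1/(1-\rho)=\gamma$.

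For the delays, fix $\nu$ and take $n\ge\nu$. Observations $1,\dots,\nu$ are pre-change and the rest are post-change, and each post-change factor has mean $P_0(B)\rho/\beta=c\beta\rho/\beta=c\rho$. Here we use $P_0(B)=\int_B\Lambda\,dP_\infty=c\beta$. It follows that $\mathbb E_\nu[y_n]=\rho^\nu(c\rho)^{n-\nu}$. By \eqref{eq:D-survival},
\begin{align*}
D_\nu(T_c^\star)=\rho^{-\nu}\sum_{n\ge\nu}\rho^\nu(c\rho)^{n-\nu}=\frac1{1-c\rho},
\end{align*}
independently of $\nu$, so the rule is an equalizer. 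The same value can be obtained by the memoryless argument: the residual lifetime after $\nu$ is geometric with per-step continuation probability $c\rho$.

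For the algebraic identity, substitute $\rho=1-1/\gamma$ to get $1-c\rho=\bigl(\gamma-c(\gamma-1)\bigr)/\gamma=\bigl(c+(1-c)\gamma\bigr)/\gamma$. This gives $1/(1-c\rho)=R_c(\gamma)$.

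For optimality, the rule is feasible ($\Einf[T_c^\star]=\gamma$) and has $J_{\sf P}(T_c^\star)=R_c(\gamma)$, so $V_{\sf P}(\gamma)\le R_c(\gamma)$. Theorem~\ref{thm:floor-bound} supplies the reverse inequality, and therefore $V_{\sf P}(\gamma)=R_c(\gamma)$.

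The only step that needs care, and the main (though mild) obstacle, is justifying the expectation factorization under $\mathbb P_\nu$. This relies on the independence of the $U_j$ from the observations and on the independence of the observations across time. We also need $P_0(B)=c\beta$ when $c=0$; there $B$ is the null-likelihood set and the identity still holds. Everything else is geometric-series bookkeeping.
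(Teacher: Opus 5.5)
Your proposal is correct and follows the paper's proof essentially step for step. You compute $q_n=\rho^n$ to get ARL $\gamma$, use $P_0(B)=c\beta$ to obtain the post-change continuation probability $c\rho$ and hence the equalizer delay $1/(1-c\rho)=R_c(\gamma)$, and then close with the all-rules lower bound of Theorem~\ref{thm:floor-bound}; the only cosmetic difference is that you evaluate $D_\nu$ through the survival-process formula \eqref{eq:D-survival}, whereas the paper uses the geometric-residual argument that you also mention.
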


\begin{proof}
See Appendix~\ref{app:floor-sharpness}.
\end{proof}
The theorem therefore identifies a sufficient condition under which the explicit all-rules lower bound in Theorem~\ref{thm:floor-bound} is achieved by a stationary randomized equalizer rule.  We next specialize this
result to the Bernoulli model, where both the likelihood-ratio floor and the probability mass of its achieving set are explicit. In this case, the sharpness condition reduces to a simple range of false-alarm levels, and the corresponding optimal rule takes an elementary randomized form.

\begin{theorem}[Exact Bernoulli minimax solution]
\label{thm:bernoulli-exact}
Consider the Bernoulli model
\begin{align}
 P_\infty(X_n=1)=p \ , \qquad \mbox{and} \qquad    P_0(X_n=1)=q\ , \qquad 0<p<q<1\ .
\end{align}
For every $1<\gamma\le1/p$, define
\begin{align}
\eta_\gamma=\frac{1-\gamma p}{\gamma(1-p)}\ .    
\end{align}
The stationary randomized rule that stops whenever $X_n=1$ and, when
$X_n=0$, stops with probability $\eta_\gamma$ and continues with
probability $1-\eta_\gamma$, is an exact Pollak minimizer.
\end{theorem}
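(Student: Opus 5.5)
The plan is to obtain this result as a direct specialization of Theorem~\ref{thm:floor-sharpness}, so the work reduces to identifying the floor constant $c$, the floor set $B$, its mass $\beta$, and checking that the prescribed rule coincides with $T_c^\star$.

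\emph{Step 1 (the floor).} In the Bernoulli model the likelihood ratio takes the two values $\Lambda(1)=q/p$ and $\Lambda(0)=(1-q)/(1-p)$. Since $p<q$, we have $q/p>1>(1-q)/(1-p)$. Hence Assumption~\ref{ass:likelihood-floor} holds with $c=(1-q)/(1-p)\in(0,1)$, and this floor is attained exactly on $B=\{0\}$. Therefore $\beta=P_\infty(B)=1-p>0$. Equivalence of the two laws also holds, because $0<p,q<1$.

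\emph{Step 2 (the range condition).} The sharpness condition $\rho=1-1/\gamma\le\beta=1-p$ is equivalent to $1/\gamma\ge p$, that is, $\gamma\le 1/p$. This is exactly the stated range, and $\gamma>1$ is assumed.

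\emph{Step 3 (matching the rule).} The rule $T_c^\star$ continues at time $n$ if and only if $X_n\in B$ (i.e., $X_n=0$) and $U_n\le\rho/\beta$. In particular, it always stops when $X_n=1$, and when $X_n=0$ it continues with probability $\rho/\beta$. We then verify
\[
1-\frac{\rho}{\beta}=\frac{(1-p)-(1-1/\gamma)}{1-p}=\frac{1/\gamma-p}{1-p}=\frac{1-\gamma p}{\gamma(1-p)}=\eta_\gamma,
\]
so the stated rule is the same as $T_c^\star$. This is the same rule in law, and hence it has the same survival process; by Lemma~\ref{lemma:survival-completeness} it has the same ARL and delays. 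Theorem~\ref{thm:floor-sharpness} then gives $\Einf[T]=\gamma$ and $J_{\sf P}(T)=V_{\sf P}(\gamma)=\gamma/(c+(1-c)\gamma)$. We may also note the explicit value $V_{\sf P}(\gamma)=1/(1-c\rho)$ with $c=(1-q)/(1-p)$.

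\emph{Expected difficulty.} Essentially none, given the earlier results. The only points needing care are the boundary case $\gamma=1/p$ and the sign of the floor. When $\gamma=1/p$, we get $\eta_\gamma=0$ and the rule is deterministic (continue iff $X_n=0$), but Theorem~\ref{thm:floor-sharpness} still applies since it allows $\rho=\beta$. For the floor, the condition $p<q$ guarantees that it is $\Lambda(0)$ and not $\Lambda(1)$ that is the minimum.
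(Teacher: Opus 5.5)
Your proposal is correct and follows essentially the paper's route. The paper likewise identifies $c=(1-q)/(1-p)$, $B=\{0\}$ and $\beta=1-p$, and reduces $\rho\le\beta$ to $\gamma\le 1/p$; it then re-verifies the per-step continuation probabilities $\rho$ and $c\rho$ directly instead of citing Theorem~\ref{thm:floor-sharpness} wholesale, and matches the resulting equalizer delay $1/(1-c\rho)$ against the lower bound of Theorem~\ref{thm:floor-bound}.
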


\begin{proof}
See Appendix~\ref{app:bernoulli-exact}.
\end{proof}
\section{The Pollak Variational Frontier Algorithm}
\label{sec:implementation}

The results of Sections~\ref{sec:truncated} and
\ref{sec:finite-dual} yield a constructive computational procedure,
which we refer to as the \emph{Pollak variational frontier algorithm}
(PVFA). Given a false-alarm requirement $\gamma$ and truncation order
$N$, the truncated problem determines dual weights and stopping
boundaries that characterize an implementable detector off boundary
ties. After terminal completion and, when necessary, ARL calibration,
the resulting rule has Pollak risk no larger than $r_{\gamma,N}$, where
$r_{\gamma,N}\downarrow V_P(\gamma)$. The computationally intensive
synthesis is performed offline; online operation uses a scalar
recursion, apart from the prescribed randomization on
positive-probability boundary ties.

\vspace{.05 in} \noindent \textbf{Offline computations.}
Choose a truncation order $N\ge\lceil\gamma\rceil$. The truncated
detector can be synthesized as follows.

\begin{enumerate}
\item
Search over $r\in[1,\gamma]$ for the truncated crossing $r_{\gamma,N}
= \min\left\{ r:\Gamma_N(r)\ge\gamma \right\}$. 
Since $\Gamma_N(r)$ is non-decreasing in $r$, this outer search can be performed by bisection.

\item For each trial value of $r$, solve the truncated dual problem $\Gamma_N(r)
=
\min_{\boldsymbol\omega\ge0}
\mathfrak D_N(\boldsymbol\omega)$. 
For a candidate vector $\boldsymbol\omega$, evaluate
$\mathfrak D_N(\boldsymbol\omega)$ using the backward recursion
\begin{align}
W_n(m)
=
\left[
1+r\omega_n-m
+
\Einf
W_{n+1}
\bigl(
\omega_{n+1}+\Lambda m
\bigr)
\right]_+
\; ,
\qquad
n=N-1,\ldots,1
\; ,
\end{align}
with $W_N\equiv0$ and $\omega_N=0$. The optimization variable
$\boldsymbol\omega=(\omega_0,\ldots,\omega_{N-1})$ is
finite-dimensional and the dual objective is convex. For discrete
models the expectations are finite sums; for continuous models their
evaluation requires numerical integration. 

Rigorous guarantees are provided by the primal--dual
enclosures in Proposition~\ref{prop:finite-certificates} and Corollary~\ref{cor:certified-bracket} of Appendix~\ref{app:certified-pvfa}.
The resulting guarantees of the 
PVFA procedure are described in Appendices~\ref{app:M3} and \ref{app:M4}.

\item At a minimizing vector $\boldsymbol\omega^\star$, compute, for $1\le n<N$, the unique root $H_n(b_{n,N})=0$. This determines the dual injections $\{\omega_n^\star\}$ and the
off-boundary stopping boundaries $\{b_{n,N}\}$. If a boundary tie has
positive probability, retain the saddle-compatible continuation kernel of a corresponding primal optimizer as specified in
Theorem~\ref{thm:finite-dual-structure}.
\end{enumerate}
The above computationally intensive optimization is confined to the
offline synthesis stage. The truncated online phase requires one likelihood-ratio evaluation, one scalar recursion, and one threshold comparison per observation. The resulting admissible rules satisfy
\begin{align}
J_{\sf P}(T_{\gamma,N})
&\le r_{\gamma,N} \; ,
\qquad \mbox{and} \qquad 
r_{\gamma,N}
\downarrow
V_P(\gamma)
\qquad\text{as }N\to\infty
\; .
\end{align}
Hence the performance upper bound improves monotonically with the truncation order and converges to the exact Pollak minimax value.

\vspace{.05 in} \noindent \textbf{Online implementation.}
Once $\{\omega_n^\star,b_{n,N}\}$ have been stored, the sequential detector
uses only one scalar state. Initialize and after observing $X_n$ update the sequence $M_n$:
\begin{align}
M_0 = \omega_0^\star
\; , \quad \mbox{and} \quad M_n = \omega_n^\star + \Lambda_n M_{n-1}\ .
\end{align}
The decision is
\begin{align}
M_n<b_{n,N}
&\;\;\Rightarrow\;\;
\text{continue} \qquad \mbox{and} \qquad 
M_n>b_{n,N}
\;\; \Rightarrow \;\;
\text{stop}
\; .
\label{eq:algorithm-decision}
\end{align}
On a boundary tie, the saddle-compatible randomization specified in
Theorem~\ref{thm:finite-dual-structure} is used. For models with a
continuous likelihood-ratio distribution, the tie event typically has
probability zero.  Consequently, the potentially high-dimensional computation is confined to the offline synthesis stage. The online detector requires only one
likelihood-ratio evaluation, one scalar recursion, and one threshold
comparison per observation. Increasing $N$ improves the performance
guarantee monotonically, with
\begin{align}
J_{\sf P}(T_{\gamma,N})
\le
r_{\gamma,N}
\downarrow
V_P(\gamma)
\; .
\end{align}
\vspace{.05 in} \noindent \textbf{Implementation summary.}
For a prescribed $(P_\infty,P_0,\gamma)$:

\begin{enumerate}
\item Choose $N\ge\lceil\gamma\rceil$.
\item Bisect over $r$ to locate $r_{\gamma,N}$.
\item At each $r$, minimize $\mathfrak D_N(\boldsymbol\omega)$ using
the backward Bellman recursion.
\item At $r_{\gamma,N}$, retain
$\boldsymbol\omega^\star$ and the roots $\{b_{n,N}\}$.
\item Online, recursively compute
$M_n=\omega_n^\star+\Lambda_nM_{n-1}$; continue when
$M_n<b_{n,N}$, stop when $M_n>b_{n,N}$, and on
$M_n=b_{n,N}$ use the saddle-compatible continuation kernel of
Theorem~\ref{thm:finite-dual-structure}.

\item Apply the terminal completion and, when necessary, the initial
ARL-calibration randomization to obtain an admissible rule with exact
ARL $\gamma$.
\end{enumerate}

\section{Numerical Evaluation}
\label{sec:numerical}

The numerical examples illustrate two complementary roles of the
finite-$\gamma$ framework. The first example considers a model for which the unrestricted Pollak optimum is available in closed form and asks whether restricting attention to the classical generalized SR
architecture entails any finite-$\gamma$ loss. The second example considers a continuous model for which the exact Pollak value is unknown and uses the truncated formulation as a computational benchmark for evaluating high-performing classical procedures. 
For the continuous-model experiments below, the reported PVFA quantity is the upper endpoint $r_U$ produced by the primal--dual procedure in Appendix~\ref{app:certified-pvfa}.
When a lower endpoint $r_L$
is also reported, Corollary~\ref{cor:certified-bracket} guarantees
\begin{align}
r_L < V_{\sf P}(\gamma) \le r_U
\; .
\end{align}
Accordingly, for a benchmark procedure $T$, a positive difference $J_{\sf P}(T)-r_U$ is a rigorous lower bound on the suboptimality of $T$, rather than an estimate of the exact optimality gap.

\subsection{Exact Separation from the Canonical Generalized SR Class}

Consider the Bernoulli change model $P_\infty(X_n=1) = \frac14$ and $P_0(X_n=1)
= \frac12$. This example isolates a structural question: does restricting the detector to a canonical SR first-crossing architecture lose anything
relative to optimization over the complete class of randomized, history-dependent stopping rules? To this end, consider the generalized
SR recursion
\begin{align}
R_n
=
(1+R_{n-1})\Lambda_n\ , \quad \mbox{and} \quad T_A^Q = \inf\{n\ge1:R_n\ge A\}
\; ,
\end{align}
where $A\geq 0$ is a constant threshold, and the initial state $R_0$ is independent of the observations and may
have an arbitrary distribution $Q$ supported strictly below $A$. Denote
the collection of all such procedures, over all thresholds $A$ and all
such initial distributions $Q$, by $\mathcal G_{\rm can}$. This class contains the
classical SR procedure, every deterministically head-started SR-$r$ procedure, arbitrary randomized head starts below the threshold, and SRP whenever its quasi-stationary initialization is
defined. The comparison therefore optimizes over a broad classical procedure class rather than against a particular choice of threshold or initialization. For the Bernoulli setting, Theorem~\ref{thm:bernoulli-exact} gives the unrestricted minimax value 
\begin{align}
V_{\sf P}(\gamma)
=
\frac{3\gamma}{\gamma+2},
\qquad
1<\gamma\le4
\; .
\label{eq:num-bernoulli-value}
\end{align}
By contrast, optimization over the canonical generalized Shiryaev–Roberts (GSR) class gives (proof in Appendix~\ref{app:canonical-gsr-separation}):
\begin{align}
\label{eq:num-gsr-value}
\inf_{\substack{T\in\mathcal G_{\rm can}\\
\Einf[T]\ge\gamma}}
J_{\sf P}(T) = 2\ , \qquad 1<\gamma\le4
\; .
\end{align}
Figure~\ref{fig:bernoulli-exact-separation} displays this separation. The
important point is not that SR procedures perform poorly:
rather, the example establishes that the canonical constant-boundary first-crossing GSR architecture is not, in general, rich enough to achieve the finite-$\gamma$ Pollak optimum. The shaded region is an exact
optimality gap, with neither curve obtained from simulation or numerical
approximation. This example therefore serves primarily as a finite-$\gamma$ structural
separation result. It demonstrates that enlarging the decision class
beyond canonical GSR first-crossing rules can matter, while also showing
that the benefit depends on the model and false-alarm regime. The next
example examines the complementary situation in which a highly optimized
SR rule already lies very close to the unrestricted
finite-$\gamma$ benchmark.

\subsection{Exponential Models with Known Optimal Pollak Risk}
We next revisit the exponential model
$f_\infty(x)=e^{-x}\mathbf{1}_{\{x\geq 0\}}$ and
$f_0(x)=2e^{-2x}\mathbf{1}_{\{x\geq 0\}}$, which provides a useful
benchmark because its finite-$\gamma$ behavior has been investigated in~\cite{polunchenkoTartakovsky2010}, which showed that a suitably head-started SR procedure is exactly
Pollak-minimax over the restricted false-alarm range $1<\gamma<\gamma_\star 
:= \left(1-\frac{1}{2}\log 3\right)^{-1}
\approx 2.219$. 
Figure~\ref{fig:exponential-performance-gaps} compares the Pollak risks
of optimized SR-$r$, SRP, and CuSum with the PVFA upper endpoint $r_U$ computed at truncation order $N=200$.  The plotted
quantity for a procedure $T$ is the margin
$J_{\sf P}(T)-r_U$.  In the previously characterized exact-optimality region,
optimized SR-$r$ satisfies $J_{\sf P}(T_{\mathrm{SR-r}})=V_{\sf P}(\gamma)\le r_U$,
so its signed margin is necessarily nonpositive and approaches zero as
the PVFA guarantee tightens.  Beyond that region, a positive margin ensures
\begin{align}
J_{\sf P}(T)
>
r_U
\ge
V_{\sf P}(\gamma)
\; ,
\end{align}
and therefore proves that the corresponding procedure is not exactly
Pollak-minimax at that finite false-alarm level.

\begin{figure}[t]
    \centering
    \begin{minipage}[t]{0.48\textwidth}
        \centering
        \includegraphics[width=\linewidth]{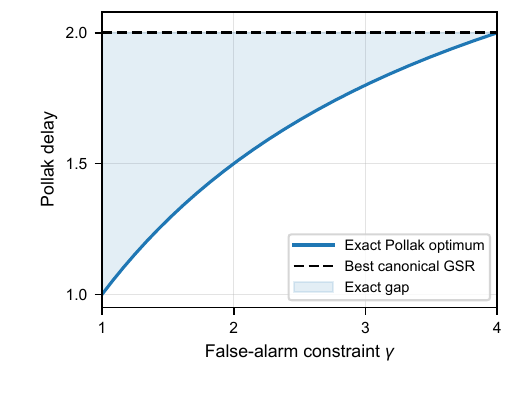}
        \captionof{figure}{\footnotesize 
        The solid curve is the exact unrestricted Pollak minimax value
        $V_{\sf P}(\gamma)=3\gamma/(\gamma+2)$, and the dashed line is the
        optimal value over the canonical constant-boundary first-crossing
        GSR class. The shaded region is the exact optimality gap.
        }
        \label{fig:bernoulli-exact-separation}
    \end{minipage}
    \hfill
    \begin{minipage}[t]{0.48\textwidth}
        \centering
        \includegraphics[width=\linewidth]{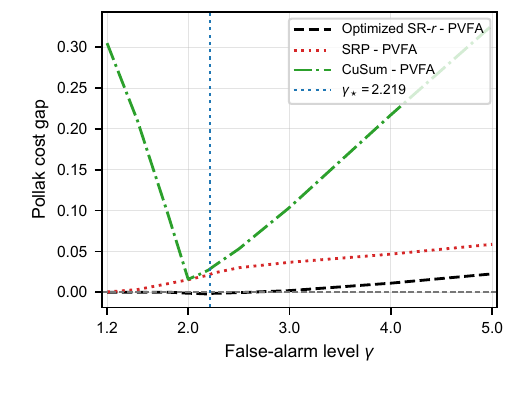}
        \captionof{figure}{\footnotesize  
        Finite-$\gamma$ performance gaps in the exponential model as a
        function of ARL $\gamma$, with PVFA
        computed at truncation order $N=200$.
        The vertical dotted line marks
        $\gamma_star=(1-\tfrac{1}{2}\log 3)^{-1}\approx2.219$, the known exact Pollak optimal.
        }
        \label{fig:exponential-performance-gaps}
    \end{minipage}

\end{figure}

\subsection{Gaussian Mean-shift Models}
We next consider the Gaussian mean-shift family $P_\infty=\mathcal{N}(0,1)$ and
$P_0=\mathcal{N}(\theta,1)$.
Unlike the Bernoulli example above, the likelihood ratio in this model is
not bounded away from zero, and the explicit exact characterization of
Section~\ref{sec:likelihood-floor} does not apply.  We instead use the truncated
PVFA construction as an increasingly sharp upper bound on the unrestricted
Pollak minimax value. These experiments address two complementary questions.  First, we examine how the truncated benchmark improves as the truncation order increases for a fixed weak change.  Second, fixing the computational truncation order, we examine how the advantage relative to classical procedures depends on the statistical strength of the change.

Figure~\ref{fig:gaussian-horizon-convergence} considers the weak mean-shift setting $\theta=0.05$ with $\gamma=10$ and varies the
truncation order $N$.  The monotone decrease of $r_{\gamma,N}$ is the
finite-$\gamma$ behavior predicted by Theorem~\ref{thm:finite-convergence}: increasing $N$ enlarges the truncated feasible class and produces a progressively sharper upper bound on $V_{\sf P}(\gamma)$.  At short orders, the truncation itself is
restrictive, and the resulting bound can lie above the classical procedures.  As $N$ increases, however, the bound falls first below CuSum
and SRP and then below the optimized deterministically head-started SR-$r$
procedure.  The latter comparison is particularly informative because
the SR head start has been optimized specifically for Pollak's criterion
at the same false-alarm level.  Thus, once
$r_{\gamma,N}<J_{\sf P}(T_{\mathrm{SR-r}})$, the comparison is not merely
between two candidate detectors: since
$V_{\sf P}(\gamma)\leq r_{\gamma,N}$, it ensures that the optimized SR-$r$
procedure cannot be exactly minimax for this finite-$\gamma$ instance.

\begin{figure}[t]
    \centering
    \begin{minipage}[t]{0.48\textwidth}
        \centering
        \includegraphics[width=\linewidth]{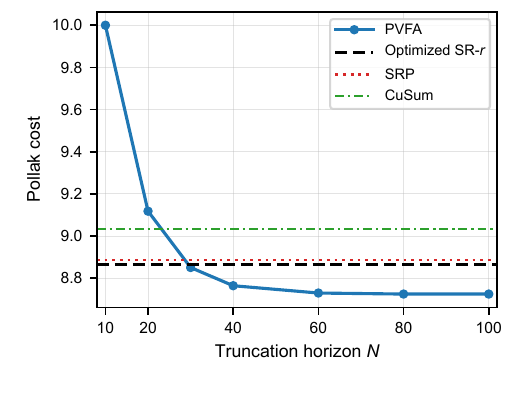}
        \captionof{figure}{\footnotesize
        Truncated convergence of PVFA with increasing truncation
        order $N$ for the Gaussian mean-shift model. The horizontal
        lines show the Pollak risks of optimized SR-$r$, SRP, and CuSum.
        }
        \label{fig:gaussian-horizon-convergence}
    \end{minipage}
    \hfill
    \begin{minipage}[t]{0.48\textwidth}
        \centering
        \includegraphics[width=\linewidth]{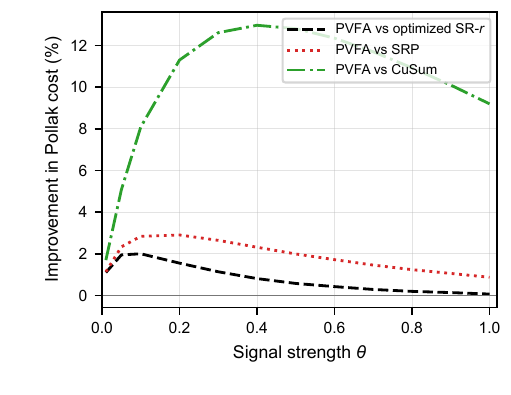}
        \captionof{figure}{\footnotesize
        Relative finite-$\gamma$ improvement as a function of the
        post-change mean $\theta$, with
        $P_\infty=\mathcal{N}(0,1)$,
        $P_0=\mathcal{N}(\theta,1)$, and PVFA computed at truncation
        order $N=100$.
        }
        \label{fig:gaussian-signal-strength}
    \end{minipage}
\end{figure}

Figure~\ref{fig:gaussian-signal-strength} examines whether this
finite-$\gamma$ separation depends systematically on the difficulty of the
change.  We fix $\gamma=10$ and $N=100$ and vary the mean shift
$\theta$.  
To make differences that are small on the absolute-delay
scale visible, the figure reports the relative reduction in Pollak cost,
defined for a benchmark procedure $T$ as
\begin{align}
\Delta_T(\theta)
:=
100\,
\frac{J_{\sf P}(T)-r_{\gamma,N}}
     {J_{\sf P}(T)}
\; .
\end{align}
Positive values therefore indicate that the unrestricted truncated
benchmark lies below the Pollak risk of the corresponding classical
procedure.

The comparison reveals a clear dependence on signal strength.  For strong
changes, optimized SR-$r$ is extremely close to the unrestricted
finite-$\gamma$ benchmark, consistent with the well-known effectiveness of
SR procedures.  As the change becomes weaker, however, the
gap becomes more visible: the improvement relative to optimized SR-$r$
rises to approximately two percent in the weak-signal regime before
decreasing again in the nearly indistinguishable limit.  The separation
from SRP is somewhat larger, while the difference from CuSum is
substantially larger over a broad range of moderate signal strengths.
Thus, the additional time-inhomogeneous degrees of freedom identified by
the variational formulation matter most in regimes in which observations
are individually only weakly informative about the change.

Taken together, Figures~\ref{fig:gaussian-horizon-convergence} and
\ref{fig:gaussian-signal-strength} provide two complementary views of the
finite-$\gamma$ effect.  The first shows that the improvement is not an
artifact of a particular finite truncation: the PVFA bound systematically
sharpens as $N$ increases.  The second shows that the magnitude of the
separation is itself model-dependent.  Classical SR-type procedures remain
remarkably close to the unrestricted optimum, particularly for readily
detectable changes, but a systematic finite-$\gamma$ gap becomes visible
when the change is weak.  This behavior is consistent with their strong
asymptotic optimality while demonstrating that asymptotic near-optimality
does not, in general, imply exact finite-$\gamma$ optimality.

\section{Conclusions}
\label{sec:conclusion}

In this paper, we have considered the minimax quickest change-detection problem under Pollak's criterion, where the objective is to minimize the worst-case conditional post-change detection delay subject to a constraint $\gamma$ on the average run length to false alarm.  While the asymptotic regime in which $\gamma\to\infty$ has been
extensively studied and several procedures are known to enjoy strong asymptotic optimality properties, the finite-$\gamma$ problem remains substantially less understood. In particular, a general characterization of the unrestricted minimax solution at fixed false-alarm levels has been lacking.  In this paper, we have addressed this gap by developing a finite-$\gamma$ framework that recasts the optimization of Pollak's minimax criterion over the space of stopping times as an equivalent optimization over a class of processes, which we refer to as \emph{survival processes}.  This reformulation also reveals the structure of an optimal test and establishes the existence of an optimizer within the survival-process representation. The resulting optimization, however, remains infinite-dimensional and is therefore not directly tractable. We address this difficulty by truncating the survival process at a finite number of coordinates,  reducing the problem to a finite-dimensional linear program. Although this truncation introduces a suboptimality gap relative to the original infinite-dimensional problem, we have shown that the gap can be explicitly controlled through the truncation order and made arbitrarily small as the truncation level increases. The resulting algorithm also recovers the conventional Shiryaev--Roberts (SR) and Shiryaev--Roberts--Pollak (SRP) procedures as special cases, while allowing optimization over a substantially broader class of stopping rules. Both analytically and empirically, we show that the resulting rules can strictly outperform these classical procedures at finite $\gamma$. Finally, under an appropriate bounded-likelihood-ratio condition, we derive an exact universal lower bound on Pollak's minimax cost and identify settings in which this bound is achieved, providing an exact characterization of the minimax optimum.

\bibliographystyle{plain}
\bibliography{QCD_Pollak}

\newpage 

\appendix

\section{Proof of Lemma~\ref{lemma:survival-completeness}}
\label{app:survival-completeness}
\subsection{Forward direction: Realizability of an admissible survival process} 

Let $\mathbf y=\{y_n:n\in\mathbb N_0\}\in\mathcal Y$ be an admissible survival process. Thus, $y_0=1$ and, for every $n\ge1$,
\begin{align}
0
\le
y_n(x_{1:n})
\le
y_{n-1}(x_{1:n-1})
\le
1
\; .
\label{eq:proof-survival-consistency}
\end{align}
We construct a randomized stopping rule that realizes these survival probabilities. For each $n\ge1$, define the conditional probability of continuing at time $n$, given that the detector has survived through time $n-1$, by
\begin{align}
c_n(x_{1:n})
:=
\begin{cases}
\displaystyle
\frac{y_n(x_{1:n})}
     {y_{n-1}(x_{1:n-1})},
&
y_{n-1}(x_{1:n-1})>0,
\\[2ex]
0,
&
y_{n-1}(x_{1:n-1})=0.
\end{cases}
\label{eq:continuation-probability}
\end{align}
Based on \eqref{eq:proof-survival-consistency} we necessarily have
\begin{align}
0\le c_n(x_{1:n})\le1
\; .
\end{align}
Let $\{U_n:n\in\mathbb{N}\}$ be the independent auxiliary random variables introduced earlier. Starting with the detector active at time zero, after observing $X_n$, and provided that it has not stopped before time $n$, let it continue beyond time $n$ if
\begin{align}
U_n
\le
c_n(X_{1:n}) \; ,
\end{align}
and stop at time $n$ otherwise. Denote the resulting stopping time by $T_{\mathbf y}$. This rule uses only the observations and auxiliary randomness available by time $n$, and therefore $T_{\mathbf y}$ is a randomized stopping time. We now verify that its survival process is exactly $\mathbf y$. For $n=0$,
\begin{align}
\mathbb P_U(T_{\mathbf y}>0)
=
1
=
y_0
\; .
\end{align}
For $n\ge1$, conditional on the observation history $X_{1:n}=x_{1:n}$, survival through time $n$ requires continuation at each of the times $\{1,\ldots,n\}$. Independence of the auxiliary random variables yields
\begin{align}
\mathbb P_U
\bigl(
T_{\mathbf y}>n
\mid
X_{1:n}=x_{1:n}
\bigr)
&=
\prod_{j=1}^{n}
c_j(x_{1:j}) =
y_n(x_{1:n})
\; .
\label{eq:survival-telescope}
\end{align}
Specifically, whenever the denominators are positive, the product in
\eqref{eq:survival-telescope} telescopes as
\begin{align}
\prod_{j=1}^{n}
\frac{y_j(x_{1:j})}
     {y_{j-1}(x_{1:j-1})}
=
\frac{y_n(x_{1:n})}{y_0}
=
y_n(x_{1:n})
\; .
\end{align}
On the other hand, when $y_j(x_{1:j})=0$ for some $j$, admissibility implies that
$y_k(x_{1:k})=0$ for every $k\ge j$ along the same history, and the
zero convention in \eqref{eq:continuation-probability} gives the same
conclusion. Hence $T_{\mathbf y}$ realizes the prescribed survival
process.

\subsection{Converse direction: Survival process induced by a stopping time}

For the converse direction, let $T$ be any randomized stopping time.
For each $n\ge0$, define
\begin{align}
y_n(x_{1:n})
:=
\mathbb P_U
\bigl(
T>n
\mid
X_{1:n}=x_{1:n}
\bigr)
\; ,
\label{eq:induced-survival-proof}
\end{align}
with $y_0=1$. Since the observation space is a standard Borel space, these
conditional probabilities admit measurable versions. Furthermore,
\begin{align}
\{T>n\}
\subseteq
\{T>n-1\} \; ,
\end{align}
because a detector that is active beyond time $n$ must necessarily
have been active beyond time $n-1$. Conditioning on the observation
history and $X_n$ being independent of ${\cal F}_{n-1}$ yields
\begin{align}
0
\le
y_n(x_{1:n})
\le
y_{n-1}(x_{1:n-1})
\le
1
\; ,
\qquad n\ge1 \; ,
\end{align}
for appropriate measurable versions of the conditional probabilities.
Thus the induced sequence
\begin{align}
\mathbf y=\{y_n:n\in\mathbb N_0\}
\end{align}
belongs to $\mathcal Y$. Hence every admissible survival process can be implemented by a
randomized stopping time, and every randomized stopping time induces an admissible survival process.

\section{Proof of Lemma~\ref{lem:frontier-lower-bound}}
\label{app:frontier-lower-bound}

By contradiction, assume that there exists an admissible stopping time $T$ satisfying $\Einf[T]
\ge \gamma$ and $J_{\sf P}(T) < r_\gamma$.
Set
\begin{align}
r :=
J_{\sf P}(T)
\; .
\end{align}
Since the residual detection delay is at least one and
$r<r_\gamma\le\gamma$, we have $r\in[1,\gamma]$. By the completeness of the survival representation, $T$ induces an
admissible survival process $\mathbf y\in\mathcal Y$. Since
\begin{align}
D_\nu(T)
\le
r
\; ,
\qquad
\nu\ge0
\; ,
\end{align}
we have
\begin{align}
A_\nu(\mathbf y)
\le
r q_\nu(\mathbf y)
\; ,
\qquad
\nu\ge0
\; .
\end{align}
Therefore, $\mathbf y \in \mathcal F_{\le r}$. By the definition of the frontier,
\begin{align}
\Gamma(r)
\ge
\operatorname{ARL}(\mathbf y)
=
\Einf[T]
\ge
\gamma
\; .
\end{align}
Hence $r$ cannot lie below the first crossing $r_\gamma$, which
contradicts $r=J_{\sf P}(T)<r_\gamma$. Therefore, every admissible
stopping time satisfying the false-alarm constraint obeys
\begin{align}
J_{\sf P}(T)
\ge
r_\gamma
\; .
\end{align}
Taking the infimum over all such stopping times yields $V_{\sf P}(\gamma) \ge r_\gamma$.

\section{Compactness, Tail Control, and Frontier Achievement}
\label{app:additional_section_4_3}
This appendix proves the compactness and tail-control results used in
Theorem~\ref{thm:frontier-optimizer}. We first establish coordinatewise
weak-* compactness of the survival process, then prove the finite-block
probability-transfer lemma and the resulting uniform block contraction,
and finally combine these ingredients to prove frontier achievement. We use
the transfer modulus $\tau_M(a)$ and the constants $m_R$ and $\delta_R$
defined in \eqref{eq:transfer-modulus} and \eqref{eq:block-delta}; the
truncated quantities $\operatorname{ARL}_N$ and $A_\nu^{[N]}$ are those
defined in \eqref{eq:finite-arl} and \eqref{eq:finite-delay-numerator}.

\subsection{Proof of Lemma~\ref{lem:survival-compact}}
\label{app:survival-compact}

The proof has two main steps. We first show that, at every fixed time
$n$, the possible survival functions form a sequentially compact
set under weak-* convergence. We then show that a subsequence can be
chosen to converge simultaneously at every finite time and that the
survival-consistency constraints are preserved in the limit. For a fixed $n\ge1$, define
\begin{align}
K_n
=
\left\{
f\in L^\infty(P_\infty^n):
0\le f\le1
\right\}
\; .
\end{align}
Since $L^\infty(P_\infty^n)$ is the dual of
$L^1(P_\infty^n)$, the Banach--Alaoglu theorem implies that its
closed unit ball is compact in the weak-* topology. The set $K_n$
is weak-* closed within this unit ball. Indeed, nonnegativity is
preserved under weak-* limits because
\begin{align}
f\ge0
\quad\Longleftrightarrow\quad
\int f g\,\mathrm dP_\infty^n
\ge0
\quad
\text{for every }g\in L_+^1(P_\infty^n)
\; ,
\end{align}
and the constraint $f\le1$ follows in the same way by applying this
argument to $1-f$. Hence $K_n$ is weak-* compact. The sigma-field is countably
generated, and therefore $L^1(P_\infty^n)$ is separable. As a consequence, the weak-* topology on the bounded set $K_n$ can be generated by a metric. Thus, compactness of $K_n$ implies the sequential property that we need: every sequence in $K_n$ has a weak-* convergent subsequence.

We now apply this property successively to the coordinates of a
sequence
\begin{align}
\{\mathbf y^{(k)}\}_{k\ge1}
\subseteq
\mathcal Y
\; .
\end{align}
First choose a subsequence along which $y_1^{(k)}$ converges weak-* in $K_1$. From that subsequence, choose a further
subsequence along which $y_2^{(k)}$ also converges. Continuing in
this way, and taking the diagonal subsequence, we obtain indices
\begin{align}
k_1<k_2<\cdots
\end{align}
and functions $y_n^\star\in K_n$ such that, for every fixed
$n\ge1$,
\begin{align}
y_n^{(k_j)}
\longrightarrow
y_n^\star
\qquad
\text{weak-* as }j\to\infty
\; .
\label{eq:coordinate-weakstar-limit}
\end{align}
Set $y_0^\star=1$. It remains to verify that the limiting sequence
\begin{align}
\mathbf y^\star
=
\{y_n^\star:n\in\mathbb N_0\}
\end{align}
still satisfies the survival-consistency constraints. For this purpose, define the lift of a function of $n-1$
observations to a function of $n$ observations by
\begin{align}
(\iota_n f)(x_{1:n})
:=
f(x_{1:n-1})
\; ,
\qquad n\ge2 \; ,
\end{align}
and let $\iota_1 y_0\equiv1$. The admissibility of every
$\mathbf y^{(k_j)}$ gives
\begin{align}
0
\le
y_n^{(k_j)}
\le
\iota_n y_{n-1}^{(k_j)}
\qquad
P_\infty^n\text{-a.s.}
\label{eq:prelimit-consistency}
\end{align}
The lift operation is continuous under weak-* convergence. To see
this, let $g\in L^1(P_\infty^n)$. Then
\begin{align}
\int
g\,\iota_n f \; ,
\mathrm dP_\infty^n
=
\int
f(x_{1:n-1})
\left[
\int
g(x_{1:n-1},x_n)
P_\infty(\mathrm dx_n)
\right]
P_\infty^{n-1}(\mathrm dx_{1:n-1})
\; .
\label{eq:lift-weakstar-continuity}
\end{align}
The expression in brackets is an $L^1(P_\infty^{n-1})$ function.
Therefore, weak-* convergence of $f$ implies weak-* convergence of
$\iota_n f$.
Consequently, from
\eqref{eq:coordinate-weakstar-limit},
\begin{align}
\iota_n y_{n-1}^{(k_j)}
-
y_n^{(k_j)}
\longrightarrow
\iota_n y_{n-1}^\star
-
y_n^\star
\end{align}
weak-*. Since every function on the left-hand side is nonnegative by
\eqref{eq:prelimit-consistency}, and the nonnegative cone is weak-*
closed, the limit is also nonnegative. Hence
\begin{align}
0
\le
y_n^\star
\le
\iota_n y_{n-1}^\star
\qquad
P_\infty^n\text{-a.s.} \; ,
\qquad
n\ge1
\; .
\end{align}
Thus $\mathbf y^\star \in \mathcal Y$. We have therefore shown that every sequence in $\mathcal Y$ has a subsequence that converges, coordinate by coordinate in the weak-*
topology, to another element of $\mathcal Y$, which proves the
claim.

\subsection{Proof of Lemma~\ref{lem:transfer}}
\label{app:transfer}

Let
\begin{align}
Z_M
:=
\frac{\mathrm d P_0^M}{\mathrm d P_\infty^M}
\end{align}
denote the likelihood ratio over a block of $M$ observations. Since
\begin{align}
\mathbb E_\infty^M[Z_M]
=
1
\; ,
\end{align}
there exists a finite $C>0$ such that
\begin{align}
\mathbb E_\infty^M
\left[
Z_M\one_{\{Z_M>C\}}
\right]
<
\frac{a}{2}
\; .
\label{eq:transfer-tail-choice}
\end{align}
For any measurable $0\le\phi\le1$,
\begin{align}
\mathbb E_0^M[\phi]
&=
\mathbb E_\infty^M[Z_M\phi]
\le
C\mathbb E_\infty^M[\phi]
+
\mathbb E_\infty^M
\left[
Z_M\one_{\{Z_M>C\}}
\right]
<
C\mathbb E_\infty^M[\phi]
+
\frac{a}{2}
\; .
\end{align}
Therefore, if
\begin{align}
\mathbb E_0^M[\phi]\ge a \; ,
\end{align}
then necessarily
\begin{align}
\mathbb E_\infty^M[\phi]
\ge
\frac{a}{2C}
>
0
\; .
\end{align}
Taking the infimum over all such $\phi$ proves the claim.

\subsection{Proof of Lemma~\ref{lem:block-contraction}}
\label{app:block-contraction}

Recall $m_R$ and $\delta_R$ from \eqref{eq:block-delta}.
If $q_n(\mathbf y)=0$, monotonicity of the survival process gives
$q_{n+m_R}(\mathbf y)=0$, and
\eqref{eq:block-contraction} holds immediately. Hence suppose
$q_n(\mathbf y)>0$. Because $\mathbf y\in\mathcal F_{\le r}$, corresponding to a
change at time $n$ we have
\begin{align}
D_n(\mathbf y)
=
\frac{A_n(\mathbf y)}{q_n(\mathbf y)}
\le
R
\; .
\label{eq:block-row-bound}
\end{align}
Conditional on survival through time $n$, the positive
integer-valued residual delay $T-n$ therefore has mean at most $R$.
On the event $T>n+m_R$, this residual delay is at least $m_R+1$.
Hence
\begin{align}
\mathbb P_n
\left(
T>n+m_R
\mid
T>n
\right)
&\le
\frac{R}{m_R+1}
 \le
\frac12
\; .
\label{eq:post-change-half}
\end{align}
Hence, after a change at time $n$, conditional on having
survived through time $n$, the detector stops during the next
$m_R$ observations with probability at least $1/2$. It remains to be transferred to the no-change law. The
detector may depend on the entire history that has survived through
time $n$, so this transfer must account for arbitrary history
dependence. Let $\mathsf H_n$ denote the complete detector history
through time $n$, including its auxiliary randomization, conditional
on $T>n$. The conditional distribution of $\mathsf H_n$ is the
same under $\mathbb P_n$ and $\mathbb P_\infty$, because the two
models coincide through time $n$. For a surviving history $h$ and a future observation block
$x_{1:m_R}$, let
\begin{align}
\psi(h,x_{1:m_R})
\in[0,1]
\end{align}
denote the conditional probability, over the detector's future
randomization, that it stops by time $n+m_R$. Averaging over the
common conditional distribution of the surviving history gives
\begin{align}
\bar\psi(x_{1:m_R})
:=
\mathbb E
\left[
\psi(\mathsf H_n,x_{1:m_R})
\mid
T>n
\right]
\; .
\label{eq:averaged-block-event}
\end{align}
The inequality in~\eqref{eq:post-change-half} implies
\begin{align}
\mathbb E_0^{m_R}[\bar\psi]
\ge
\frac12
\; .
\end{align}
By Lemma~\ref{lem:transfer} and the definition of $\delta_R$,
\begin{align}
\mathbb E_\infty^{m_R}[\bar\psi]
\ge
\delta_R
\; .
\end{align}
Thus, under the no-change law,
\begin{align}
\mathbb P_\infty
\left(
T\le n+m_R
\mid
T>n
\right)
\ge
\delta_R
\; .
\label{eq:prechange-stop-block}
\end{align}
Since
\begin{align}
\mathbb P_\infty
\left(
T>n+m_R
\mid
T>n
\right)
=
\frac{q_{n+m_R}(\mathbf y)}
     {q_n(\mathbf y)}
\; ,
\end{align}
we obtain \eqref{eq:block-contraction}. Iterating the contraction at block endpoints gives
\begin{align}
q_{j m_R}(\mathbf y)
\le
(1-\delta_R)^j
\; .
\label{eq:block-endpoints}
\end{align}
Since $q_n(\mathbf y)$ is nonincreasing in $n$, for every
$k\ge j$,
\begin{align}
\sum_{n=k m_R}^{(k+1)m_R-1}
q_n(\mathbf y)
\le
m_R(1-\delta_R)^k
\; .
\end{align}
Summing these bounds over $k\ge j$ yields
\begin{align}
\sum_{n=j m_R}^{\infty}
q_n(\mathbf y)
&\le
m_R
\sum_{k=j}^{\infty}
(1-\delta_R)^k
=
\frac{m_R}{\delta_R}
(1-\delta_R)^j
\; ,
\end{align}
which proves \eqref{eq:uniform-tail-bound}.

\subsection{Proof of Theorem~\ref{thm:frontier-optimizer}}
\label{app:frontier-optimizer}

We use $A_\nu^{[N]}$ and $\operatorname{ARL}_N$ as defined in \eqref{eq:finite-delay-numerator} and \eqref{eq:finite-arl}.
We proceed in three steps. First, we show that limits of fixed-risk feasible survival processes remain fixed-risk feasible. Second, we show that the ARL is preserved under such limits. Finally, we apply these facts to an optimizing sequence.

\subsubsection{Preservation of the fixed-risk constraints}

Suppose that, along the subsequence provided by Lemma~\ref{lem:survival-compact} 
\begin{align}
y_n^{(k_j)}
\longrightarrow
y_n^\star
\end{align}
for every fixed $n$, where $\mathbf{y}^\star\in\cal Y$. We must additionally verify that it satisfies the fixed-risk inequalities. Fix a change time $\nu$ and a finite $N\ge\nu+1$. Because $A_\nu^{[N]}$ depends on only finitely many coordinates and each coefficient $L_{\nu,n}$ belongs to $L^1(P_\infty^n)$, weak-* convergence gives
\begin{align}
A_\nu^{[N]}\bigl(\mathbf y^{(k)}\bigr)
\longrightarrow
A_\nu^{[N]}(\mathbf y^\star)
\; .
\label{eq:partial-A-convergence}
\end{align}
Similarly,
\begin{align}
q_\nu\bigl(\mathbf y^{(k)}\bigr)
\longrightarrow
q_\nu(\mathbf y^\star)
\; .
\label{eq:q-convergence}
\end{align}
Since every $\mathbf y^{(k)}\in\mathcal F_{\le r}$,
\begin{align}
A_\nu\bigl(\mathbf y^{(k)}\bigr)
\le
r q_\nu\bigl(\mathbf y^{(k)}\bigr)
\; .
\end{align}
The truncated numerator is no larger than the full numerator, so
\begin{align}
A_\nu^{[N]}\bigl(\mathbf y^{(k)}\bigr)
\le
r q_\nu\bigl(\mathbf y^{(k)}\bigr)
\; .
\end{align}
Passing to the limit in $k$ yields
\begin{align}
A_\nu^{[N]}(\mathbf y^\star)
\le
r q_\nu(\mathbf y^\star)
\; .
\label{eq:partial-row-limit}
\end{align}
This holds for every finite $N$. Since the summands in
$A_\nu^{[N]}(\mathbf y^\star)$ are nonnegative,
\begin{align}
A_\nu(\mathbf y^\star)
=
\lim_{N\to\infty}
A_\nu^{[N]}(\mathbf y^\star) \; ,
\end{align}
and therefore
\begin{align}
A_\nu(\mathbf y^\star)
\le
r q_\nu(\mathbf y^\star)
\; .
\end{align}
Since $\nu$ was arbitrary,
\begin{align}
\mathbf y^\star
\in
\mathcal F_{\le r}
\; .
\label{eq:limit-fixed-risk-feasible}
\end{align}
Thus, the fixed-risk feasible class is preserved under the convergence used here.

\subsubsection{Preservation of the ARL}

For every fixed $N$, weak-* convergence of the first $N$ coordinates gives
\begin{align}
\operatorname{ARL}_N\bigl(\mathbf y^{(k)}\bigr)
&\longrightarrow
\operatorname{ARL}_N\bigl(\mathbf y^\star\bigr)
\; .
\end{align}
Indeed, for every fixed $n$,
\begin{align}
q_n\bigl(\mathbf y^{(k)}\bigr)
=
\Einf\!\left[y_n^{(k)}(X_{1:n})\right]
&\longrightarrow
\Einf\!\left[y_n^\star(X_{1:n})\right]
=
q_n\bigl(\mathbf y^\star\bigr)
\; ,
\end{align}
and $\operatorname{ARL}_N$ is a finite sum of these quantities. The remaining issue is to pass from convergence of the truncated ARLs to
convergence of the full infinite-horizon ARLs. This is precisely where the
uniform tail bound of Lemma~\ref{lem:block-contraction} is used. By
Step~C.4.1, the limiting process satisfies
\begin{align}
\mathbf y^\star
\in
\mathcal F_{\le r}
\; .
\end{align}
Hence Lemma~\ref{lem:block-contraction} applies both to every
$\mathbf y^{(k)}$ and to $\mathbf y^\star$. In particular, for every
$\mathbf y\in\mathcal F_{\le r}$,
\begin{align}
0
&\le
\operatorname{ARL}(\mathbf y)
-
\operatorname{ARL}_N(\mathbf y)
=
\sum_{n=N}^{\infty}
q_n(\mathbf y)
\le
\frac{m_r}{\delta_r}
(1-\delta_r)^{\lfloor N/m_r\rfloor}
\; .
\end{align}
Define
\begin{align}
b_N(r)
:=
\frac{m_r}{\delta_r}
(1-\delta_r)^{\lfloor N/m_r\rfloor}
\; .
\end{align}
Since $\delta_r>0$, we have
\begin{align}
b_N(r)
&\longrightarrow
0
\qquad
\text{as }N\to\infty
\; .
\end{align}
Furthermore, the bound is uniform over the entire fixed-risk feasible class
$\mathcal F_{\le r}$. For every fixed $N$, we therefore have
\begin{align}
\left|
\operatorname{ARL}\bigl(\mathbf y^{(k)}\bigr)
-
\operatorname{ARL}\bigl(\mathbf y^\star\bigr)
\right|
&\le
\left|
\operatorname{ARL}_N\bigl(\mathbf y^{(k)}\bigr)
-
\operatorname{ARL}_N\bigl(\mathbf y^\star\bigr)
\right|
\nonumber\\
&\quad
+
\left|
\operatorname{ARL}\bigl(\mathbf y^{(k)}\bigr)
-
\operatorname{ARL}_N\bigl(\mathbf y^{(k)}\bigr)
\right|
\nonumber\\
&\quad
+
\left|
\operatorname{ARL}\bigl(\mathbf y^\star\bigr)
-
\operatorname{ARL}_N\bigl(\mathbf y^\star\bigr)
\right|
\nonumber\\
&\le
\left|
\operatorname{ARL}_N\bigl(\mathbf y^{(k)}\bigr)
-
\operatorname{ARL}_N\bigl(\mathbf y^\star\bigr)
\right|
+
2b_N(r)
\; .
\end{align}
Taking $k\to\infty$ for fixed $N$ and using the convergence of the
truncated ARLs gives
\begin{align}
\limsup_{k\to\infty}
\left|
\operatorname{ARL}\bigl(\mathbf y^{(k)}\bigr)
-
\operatorname{ARL}\bigl(\mathbf y^\star\bigr)
\right|
&\le
2b_N(r)
\; .
\end{align}
Finally, letting $N\to\infty$ and using $b_N(r)\to0$, we obtain
\begin{align}
\operatorname{ARL}\bigl(\mathbf y^{(k)}\bigr)
&\longrightarrow
\operatorname{ARL}\bigl(\mathbf y^\star\bigr)
\; .
\end{align}
Thus, the ARL is preserved when passing to the limit within the fixed-risk
feasible class.

\subsubsection{Apply the result to an optimizing sequence}

Choose an optimizing sequence satisfying
\begin{align}
\mathbf y^{(k)}
&\in
\mathcal F_{\le r} \; ,
\\
\operatorname{ARL}\bigl(\mathbf y^{(k)}\bigr)
&\ge
\Gamma(r)-\frac{1}{k}
\; .
\end{align}
By Lemma~\ref{lem:survival-compact}, this sequence has a subsequence converging coordinatewise weak-* to some
\begin{align}
\mathbf y_r^\star
\in
\mathcal Y
\; .
\end{align}
Step~1 shows that
\begin{align}
\mathbf y_r^\star
\in
\mathcal F_{\le r} \; ,
\end{align}
and Step~2 shows that
\begin{align}
\operatorname{ARL}(\mathbf y_r^\star)
=
\lim_{k\to\infty}
\operatorname{ARL}\bigl(\mathbf y^{(k)}\bigr)
=
\Gamma(r)
\; .
\end{align}
Therefore $\mathbf y_r^\star$ achieves the supremum defining
$\Gamma(r)$, proving the theorem.

\section{Proof of Lemma~\ref{lem:tail-completion}}
\label{app:tail-completion}

\subsection{Preservation of the ARL} 
The original and modified rules coincide on $E^c$. On $E$, the original rule stops at time $N$, while \eqref{eq:completion-conditional-mean} shows that the modified rule also has conditional mean stopping time $N$. Therefore,
\begin{align}
\Einf[\widetilde T]
=
\Einf[T]
\; .
\end{align}
Thus, the repair does not sacrifice any of the false-alarm performance achieved by the frontier optimizer.

\subsection{Preservation of the earlier Pollak fixed-rate constraints}

Consider a change time
\begin{align}
0\le\nu\le N-2
\; .
\end{align}
On the terminal event $E$, both the original and modified rules necessarily survive beyond $\nu$. Therefore, the survival denominator at time $\nu$ is unchanged. Furthermore, on $E$, the expected terminal time remains $N$. Hence,
\begin{align}
\mathbb E_\nu
\left[
(\widetilde T-\nu)\one_E
\right]
&=
\mathbb P_\nu(E)(N-\nu) =
\mathbb E_\nu
\left[
(T-\nu)\one_E
\right]
\; .
\label{eq:completion-early-numerator}
\end{align}
Since the two rules coincide on $E^c$, both the numerator and denominator of every earlier Pollak fixed-rate constraint are preserved:
\begin{align}
A_\nu(\widetilde T)
=
A_\nu(T)
\qquad \mbox{and} \qquad
q_\nu(\widetilde T)
=
q_\nu(T),
\qquad
0\le\nu\le N-2
\; .
\label{eq:completion-early-components}
\end{align}
Consequently,
\begin{align}
D_\nu(\widetilde T)
=
D_\nu(T) \; ,
\qquad
0\le\nu\le N-2
\; .
\end{align}

\subsection{Control of the newly created late fixed-rate constraints}

The behavior is different beginning at time $N-1$, because these change times did not have positive survival under the original extended rule. Under the repaired rule,
\begin{align}
\{\widetilde T>N-1\}
=
E\cap\{B=1\}
\; .
\end{align}
Conditional on this event, the residual detection delay is $G$. Therefore,
\begin{align}
D_{N-1}(\widetilde T)
=
\mathbb E[G]
=
r
\; .
\end{align}
Now consider
\begin{align}
\nu
=
N-1+k \; ,
\qquad
k\ge1
\; .
\end{align}
Survival to time $\nu$ requires
\begin{align}
E,\qquad B=1,\qquad G>k \; .
\end{align}
Conditional on this survival event, the remaining delay is $G-k$. By the memoryless property of the geometric distribution,
\begin{align}
\Law(G-k\mid G>k)
=
\Law(G)
\; ,
\end{align}
and hence
\begin{align}
D_\nu(\widetilde T)
=
r \; ,
\qquad
\nu\ge N-1
\; .
\end{align}
Thus, the repair does not create any new Pollak fixed-rate constraint with a delay exceeding the candidate risk level $r$.

\subsection{Positive survival at every finite time}

Finally, the geometric branch has a strictly positive tail because $r>1$. For every finite $k\ge0$,
\begin{align}
\Pinf(\widetilde T>N-1+k)
&=
\Pinf(E)
\mathbb P(B=1)
\mathbb P(G>k) >
0
\; .
\label{eq:completion-positive-tail}
\end{align}
Together with $q_{N-1}(T)>0$, this shows that
\begin{align}
\Pinf(\widetilde T>n)>0
\end{align}
for every finite $n$. The repaired rule is therefore admissible under Pollak's original criterion.

The role of the Bernoulli and geometric variables can now be seen separately. The Bernoulli variable controls how much probability is assigned to the new tail. Choosing
\begin{align}
\mathbb P(B=1)=\frac1r
\end{align}
allows a geometric residual of mean $r$ to be introduced while preserving the original one-step mean through
\begin{align}
\mathbb E[BG]=1 \; .
\end{align}
The geometric distribution then supplies the second property we need: memorylessness makes the conditional residual mean equal to $r$ at every subsequent change time. Together, these two properties preserve the old Pollak fixed-rate constraints and control all newly created ones.

It is also useful to note the scope of the assumptions in Lemma~\ref{lem:tail-completion}. The condition $r>1$ is essential for creating a genuinely positive infinite tail: when $r=1$, a geometric random variable with mean one is degenerate at one and provides no extension beyond the original finite terminal time. Similarly, the condition $N\ge2$ is exactly the case relevant to the minimax crossing with $\gamma>1$. Any extended rule with ARL greater than one must have positive survival beyond time one, so its first zero-survival time cannot be $N=1$. In the next section, we will also establish that the crossing risk $r_\gamma$ is strictly larger than one.

Lemma~\ref{lem:tail-completion} therefore resolves the second obstacle in the achievability argument. Once an extended frontier optimizer at risk $r$ has been found, either it already has positive survival at every finite time, in which case no repair is needed, or its first zero-survival time is finite, and the geometric completion converts it into a genuine Pollak-admissible stopping rule with
\begin{align}
\Einf[\widetilde T]
=
\Gamma(r) \; ,
\qquad
J_{\sf P}(\widetilde T)
\le
r
\; .
\end{align}
At the crossing $r=r_\gamma$, this is precisely the construction needed to establish $V_{\sf P}(\gamma) \le r_\gamma$
once we verify that the frontier actually reaches $\gamma$ at $r_\gamma$. The next section completes that argument and, when the resulting ARL exceeds $\gamma$, provides an exact final calibration to the prescribed false-alarm level.

\section{Proof of Lemma~\ref{lem:arl-calibration}}
\label{app:arl-calibration}

The terminal repair in the previous section solves the admissibility problem: starting from an extended optimizer with a finite terminal time, it produces a genuine stopping rule with positive survival at every finite time, the same ARL, and Pollak risk no larger than the prescribed risk level $r$. In this subsection, we address the final issue of ARL calibration. The frontier problem maximizes the ARL subject to a fixed risk bound. Consequently, a frontier optimizer at a risk level $r$ may have $\Einf[\bar T]
= a > \gamma$. Such a rule already satisfies the original constraint $\Einf[\bar T]
\ge \gamma$. While admissible,  we also would like the final minimax rule to satisfy the prescribed false-alarm level \emph{exactly}, i.e., $\Einf[T]
= \gamma$. This requires reducing the ARL without increasing the Pollak risk.

When ARL exceeds $\gamma$, a natural way to reduce the ARL is to mix the rule $\bar T$ with a rule that stops immediately. What makes this particularly convenient under Pollak's criterion is the conditioning on survival. If the change occurs at any time $\nu\ge1$, the immediate-stop branch cannot survive to $\nu$, and therefore disappears completely once we condition on $T>\nu$. Hence all Pollak fixed-rate constraints with $\nu\ge1$ remain unchanged. Only the fixed-rate constraint corresponding to an immediate change, $\nu=0$, must be checked separately. The following lemma formalizes this observation. If
\begin{align}
a
=
\gamma \; ,
\end{align}
there is nothing to modify, and we simply take
\begin{align}
T_\gamma
=
\bar T
\; .
\end{align}
Suppose therefore that
\begin{align}
a
>
\gamma
>
1
\; .
\end{align}
Define
\begin{align}
\alpha
:=
\frac{\gamma-1}{a-1}
\; .
\label{eq:calibration-alpha}
\end{align}
Since $1<\gamma<a$,
\begin{align}
0
<
\alpha
<
1
\; .
\end{align}
Using auxiliary randomization independent of the observations, choose at the outset between two branches. With probability $\alpha$, run a law-equivalent copy of $\bar T$; with probability $1-\alpha$, stop immediately at time $1$. Denote the resulting stopping rule by $T_\gamma$. Equivalently,
\begin{align}
T_\gamma
:=
\begin{cases}
\bar T,
&
\text{with probability }\alpha,
\\[1ex]
1,
&
\text{with probability }1-\alpha.
\end{cases}
\label{eq:calibration-mixture}
\end{align}
As in the preceding terminal-repair construction, the required branch randomization can be obtained from the atomless auxiliary detector randomization by a measure-preserving splitting, so this construction remains within the behavioral randomized class. We first verify the ARL. By construction,
\begin{align}
\Einf[T_\gamma]
&=
(1-\alpha)\cdot 1
+
\alpha a =
1+\alpha(a-1) =
1+
\frac{\gamma-1}{a-1}(a-1) =
\gamma
\; ,
\end{align}
which proves \eqref{eq:calibration-exact-arl}. We next examine the Pollak fixed-rate constraints. Consider any
\begin{align}
\nu
\ge
1
\; .
\end{align}
The immediate-stop branch has $T_\gamma=1$ and therefore can never contribute to the event
\begin{align}
\{T_\gamma>\nu\} \; .
\end{align}
Consequently,
\begin{align}
q_\nu(T_\gamma)
&=
\alpha q_\nu(\bar T) \; ,
\label{eq:calibration-q}\\
A_\nu(T_\gamma)
&=
\alpha A_\nu(\bar T)
\; .
\label{eq:calibration-A}
\end{align}
Since $\bar T$ is admissible and $\alpha>0$, the denominator remains strictly positive. Hence
\begin{align}
D_\nu(T_\gamma)
&=
\frac{A_\nu(T_\gamma)}
     {q_\nu(T_\gamma)} =
\frac{\alpha A_\nu(\bar T)}
     {\alpha q_\nu(\bar T)} =
D_\nu(\bar T) \; ,
\qquad
\nu\ge1
\; .
\end{align}
Thus, every Pollak fixed-rate constraint after time zero is preserved exactly. It remains only to check the fixed-rate constraint at $\nu=0$. Since every stopping rule considered here satisfies $T\ge1$,
\begin{align}
q_0(T_\gamma)
=
1 \; ,
\end{align}
and therefore
\begin{align}
D_0(T_\gamma)
=
\mathbb E_0[T_\gamma]
\; .
\end{align}
Using the mixture,
\begin{align}
D_0(T_\gamma)
&=
(1-\alpha)\cdot1
+
\alpha D_0(\bar T) \le
(1-\alpha)
+
\alpha r \le
r \; ,
\end{align}
where the final inequality follows from $r\ge1$. Thus,
\begin{align}
D_\nu(T_\gamma)
\le
r \; ,
\qquad
\forall\,\nu\ge0
\; ,
\end{align}
and hence
\begin{align}
J_{\sf P}(T_\gamma)
\le
r
\; .
\end{align}
Finally, admissibility is preserved. For every finite $\nu\ge1$,
\begin{align}
q_\nu(T_\gamma)
=
\alpha q_\nu(\bar T)
>
0 \; ,
\end{align}
while $q_0(T_\gamma)=1$. Therefore $T_\gamma$ has positive survival at every finite change time and is admissible under Pollak's original criterion.

The calibration mechanism reduces the no-change mean from $a$ to any prescribed value between $1$ and $a$. At the same time, Pollak's conditioning protects all fixed-rate constraints with $\nu\ge1$: once we condition on survival to the change time, the immediate-stop branch is absent, so both the numerator and denominator are multiplied by the same factor $\alpha$, and their ratio is unchanged. The only affected fixed-rate constraint is $\nu=0$, where the new delay is a convex combination of $1$ and the original delay and therefore cannot exceed the original risk bound. This distinction is also useful conceptually. We note that the exact calibration is not needed merely to prove $V_{\sf P}(\gamma) \le r$. Rather, it shows the stronger statement that whenever a feasible rule at risk $r$ has ARL at least $\gamma$, it can be replaced by another rule with the same risk bound and with the false-alarm constraint active, i.e., $\Einf[T_\gamma] = \gamma$. Once we establish that the frontier reaches the prescribed ARL at the crossing risk $r_\gamma$, the existence and repair results from the preceding sections provide an admissible rule $\bar T$ satisfying
\begin{align}
\Einf[\bar T]
\ge
\gamma
\qquad \mbox{and} \qquad
J_{\sf P}(\bar T)
\le
r_\gamma
\; .
\end{align}
Lemma~\ref{lem:arl-calibration} then produces an admissible rule
$T_\gamma^\star$ satisfying
\begin{align}
\Einf[T_\gamma^\star]
=
\gamma
\qquad \mbox{and} \qquad
J_{\sf P}(T_\gamma^\star)
\le
r_\gamma
\; .
\label{eq:calibrated-crossing-rule}
\end{align}
This is precisely the construction required for the achievability direction $V_{\sf P}(\gamma) \le r_\gamma$.
The remaining task is therefore to verify that the infimum defining $r_\gamma$ is actually achieved, so that $\Gamma(r_\gamma) \ge \gamma$. Once this is established, the lower bound from Lemma~\ref{lem:frontier-lower-bound} and the calibrated construction above will together yield the exact minimax identity $V_{\sf P}(\gamma) = r_\gamma$.

\section{Proof of Lemma~\ref{lem:crossing-achieved}}
\label{app:crossing-achieved}

By the definition of $r_\gamma$, we can choose a decreasing sequence
\begin{align}
r_k
\downarrow
r_\gamma \; ,
\qquad
\Gamma(r_k)
\ge
\gamma
\; .
\label{eq:crossing-sequence}
\end{align}
For each $k$, Theorem~\ref{thm:frontier-optimizer} provides a frontier optimizer
\begin{align}
\mathbf y^{(k)}
\in
\mathcal Y_{\le r_k}
\end{align}
satisfying
\begin{align}
\operatorname{ARL}\bigl(\mathbf y^{(k)}\bigr)
=
\Gamma(r_k)
\ge
\gamma
\; .
\label{eq:crossing-optimizers}
\end{align}
Since $r_k\le\gamma$, we have
\begin{align}
\mathcal Y_{\le r_k}
\subseteq
\mathcal Y_{\le\gamma} \; ,
\end{align}
and therefore every $\mathbf y^{(k)}$ belongs to the common fixed-risk class
$\mathcal Y_{\le\gamma}$. By the compactness argument developed in the preceding section, there is a subsequence, which we continue to index by $k$, and a survival process $\mathbf y^\star$ such that, for every fixed $n$,
\begin{align}
y_n^{(k)}
\longrightarrow
y_n^\star
\qquad
\text{weak-* as }k\to\infty
\; .
\label{eq:crossing-weakstar}
\end{align}
We first show that the limiting process satisfies the fixed-risk constraints at the limiting risk $r_\gamma$. Fix a change time $\nu$ and a finite $N\ge\nu+1$. Since the truncated numerator depends on only finitely many survival coordinates,
\begin{align}
A_\nu^{[N]}(\mathbf y^\star)
&=
\lim_{k\to\infty}
A_\nu^{[N]}\bigl(\mathbf y^{(k)}\bigr)
\le
\lim_{k\to\infty}
r_k q_\nu\bigl(\mathbf y^{(k)}\bigr)
=
r_\gamma q_\nu(\mathbf y^\star)
\; .
\label{eq:crossing-partial-row}
\end{align}
The inequality follows because
$\mathbf y^{(k)}\in\mathcal Y_{\le r_k}$.
Since \eqref{eq:crossing-partial-row} holds for every finite $N$, letting $N\to\infty$ gives
\begin{align}
A_\nu(\mathbf y^\star)
\le
r_\gamma q_\nu(\mathbf y^\star)
\; .
\end{align}
Since $\nu$ was arbitrary,
\begin{align}
\mathbf y^\star
\in
\mathcal Y_{\le r_\gamma}
\; .
\label{eq:crossing-limit-feasible}
\end{align}
It remains to verify that the limiting process retains an ARL of at least $\gamma$. This is precisely the role of the uniform tail control established earlier. On the common class
$\mathcal Y_{\le\gamma}$, the infinite ARL is continuous under the convergence in \eqref{eq:crossing-weakstar}. Therefore,
\begin{align}
\operatorname{ARL}(\mathbf y^\star)
&=
\lim_{k\to\infty}
\operatorname{ARL}\bigl(\mathbf y^{(k)}\bigr) \ge
\gamma
\; .
\end{align}
Together with \eqref{eq:crossing-limit-feasible}, this shows that
\begin{align}
\Gamma(r_\gamma)
\ge
\operatorname{ARL}(\mathbf y^\star)
\ge
\gamma
\; .
\end{align}
Hence $r_\gamma$ itself belongs to the crossing set, and the infimum in its definition is a minimum.

\section{Proof of Lemma~\ref{lem:r-gamma-positive}}
\label{app:r-gamma-positive}

Consider the fixed-risk class at $r=1$. If
\begin{align}
\mathbf y
\in
\mathcal F_{\le 1}
\; ,
\end{align}
then the change-time constraint corresponding to $\nu=0$ gives
\begin{align}
A_0(\mathbf y)
&\le
q_0(\mathbf y)
=
1
\; .
\end{align}
Since
\begin{align}
A_0(\mathbf y)
&=
1
+
\sum_{n=1}^{\infty}
\Ezero\!\left[
y_n(X_{1:n})
\right]
\; ,
\end{align}
and every term in the sum is nonnegative, we must have
\begin{align}
\Ezero\!\left[
y_n(X_{1:n})
\right]
&=
0
\; ,
\qquad
n\ge1
\; .
\end{align}
Hence
\begin{align}
y_n
&=
0
\qquad
P_0^n\text{-almost surely}
\; ,
\qquad
n\ge1
\; .
\end{align}
By the mutual absolute continuity of $P_0$ and $P_\infty$, the finite
product laws $P_0^n$ and $P_\infty^n$ are equivalent. Therefore,
\begin{align}
q_n(\mathbf y)
=
\Einf\!\left[
y_n(X_{1:n})
\right]
&=
0
\; ,
\qquad
n\ge1
\; ,
\end{align}
and consequently
\begin{align}
\operatorname{ARL}(\mathbf y)
&=
q_0(\mathbf y)
=
1
\; .
\end{align}
Conversely, the survival process defined by
\begin{align}
y_0^\circ
&=
1
\; ,
&
y_n^\circ
&=
0
\; ,
\qquad
n\ge1
\; ,
\end{align}
belongs to $\mathcal F_{\le1}$ and has ARL equal to one. Hence $\Gamma(1)= 1$. By Lemma~\ref{lem:crossing-achieved}, $\Gamma(r_\gamma)\ge \gamma$.
If $r_\gamma=1$, then
\begin{align}
\gamma
&\le
\Gamma(r_\gamma)
=
\Gamma(1)
=
1
\; ,
\end{align}
which contradicts $\gamma>1$. Therefore, $r_\gamma > 1$.

\section{Proof of Theorem~\ref{thm:main}}
\label{app:main}

Lemma~\ref{lem:frontier-lower-bound} has already established $V_{\sf P}(\gamma)
\ge r_\gamma$. It remains to construct an admissible stopping rule achieving the reverse inequality.
By Lemma~\ref{lem:crossing-achieved},
\begin{align}
\Gamma(r_\gamma)
\ge
\gamma
\; .
\end{align}
Theorem~\ref{thm:frontier-optimizer} therefore provides a survival process
\begin{align}
\mathbf y_\gamma^\star
\in
\mathcal Y_{\le r_\gamma}
\end{align}
such that
\begin{align}
\operatorname{ARL}(\mathbf y_\gamma^\star)
=
\Gamma(r_\gamma)
\ge
\gamma
\; .
\label{eq:crossing-optimizer-arl}
\end{align}
By Lemma~\ref{lemma:survival-completeness}, this survival process can be implemented by a randomized stopping time, which we denote by $T^\circ$. There are now two possibilities.
If
\begin{align}
\Pinf(T^\circ>n)
>
0 \; ,
\qquad
\forall\,n<\infty \; ,
\end{align}
then $T^\circ$ is already admissible under Pollak's original criterion. Since
$\mathbf y_\gamma^\star\in\mathcal Y_{\le r_\gamma}$,
\begin{align}
J_{\sf P}(T^\circ)
\le
r_\gamma
\; .
\end{align}
Otherwise, let $N$ be the first finite time at which
\begin{align}
\Pinf(T^\circ>N)
=
0
\; .
\end{align}
Because
\begin{align}
\Einf[T^\circ]
=
\Gamma(r_\gamma)
\ge
\gamma
>
1 \; ,
\end{align}
the first zero cannot occur at $N=1$; hence $N\ge2$. Furthermore, Lemma~\ref{lem:r-gamma-positive} gives
\begin{align}
r_\gamma>1 \; .
\end{align}
Thus the hypotheses of Lemma~\ref{lem:tail-completion} are satisfied. Applying the geometric terminal completion produces an admissible stopping rule $\bar T$ such that
\begin{align}
\Einf[\bar T]
&=
\Gamma(r_\gamma)
\ge
\gamma \; ,
\label{eq:completed-crossing-arl}\\
J_{\sf P}(\bar T)
&\le
r_\gamma \; ,
\label{eq:completed-crossing-risk}\\
\Pinf(\bar T>n)
&>
0 \; ,
\qquad
\forall\,n<\infty
\; .
\label{eq:completed-crossing-positive}
\end{align}
If no terminal repair was needed, we simply take
\begin{align}
\bar T
=
T^\circ
\end{align}
and the same conclusions hold. At this point we already have an admissible rule whose ARL is at least $\gamma$ and whose Pollak risk is at most $r_\gamma$. Therefore,
\begin{align}
V_{\sf P}(\gamma)
\le
r_\gamma
\; .
\label{eq:main-upper}
\end{align}
Combining \eqref{eq:main-upper} with the lower bound $V_{\sf P}(\gamma)
\ge r_\gamma$ gives
\begin{align}
V_{\sf P}(\gamma)
=
r_\gamma
\; .
\label{eq:main-equality}
\end{align}
It remains only to obtain exact ARL calibration. If $\Einf[\bar T]
= \gamma$, set
\begin{align}
T_\gamma^\star
=
\bar T
\; .
\end{align}
If instead
\begin{align}
\Einf[\bar T]
>
\gamma \; ,
\end{align}
apply Lemma~\ref{lem:arl-calibration}. It produces an admissible stopping time $T_\gamma^\star$ satisfying
\begin{align}
\Einf[T_\gamma^\star]
&=
\gamma \; ,
\\
J_{\sf P}(T_\gamma^\star)
&\le
r_\gamma \; ,
\\
\Pinf(T_\gamma^\star>n)
&>
0 \; ,
\qquad
\forall\,n<\infty
\; .
\end{align}
Since $T_\gamma^\star$ is feasible for the original minimax problem,
\begin{align}
J_{\sf P}(T_\gamma^\star)
\ge
V_{\sf P}(\gamma)
=
r_\gamma \; .
\end{align}
Therefore,
\begin{align}
J_{\sf P}(T_\gamma^\star)
=
V_{\sf P}(\gamma)
=
r_\gamma
\; ,
\end{align}
which proves the theorem.

\section{Proof of Theorem~\ref{thm:finite-frontier-bounds}}
\label{app:finite-frontier-bound}

We prove the two inequalities in \eqref{eq:frontier-error-bound} separately. First, consider any survival process $\mathbf y$ feasible for the
$N$-truncated problem at risk level $r$. Extend it by setting
$y_n=0$ for every $n\ge N$. For every $\nu<N$, the corresponding
untruncated numerator is then identical to its truncated
counterpart,
\begin{align}
A_\nu(\mathbf y)
=
A_\nu^{[N]}(\mathbf y)
\le
r q_\nu(\mathbf y)
\; .
\end{align}
For every $\nu\ge N$, both $q_\nu(\mathbf y)$ and
$A_\nu(\mathbf y)$ are zero. Hence the zero-extended process is
feasible for the extended fixed-risk problem defining $\Gamma(r)$.
Furthermore, its untruncated ARL equals its truncated ARL.
Therefore,
\begin{align}
\Gamma_N(r)
\le
\Gamma(r)
\; .
\label{eq:app-finite-lower}
\end{align}
For the reverse inequality, consider any untruncated process
$\mathbf y$ feasible at risk level $r$, and truncate it after time
$N-1$. For every $\nu<N$, truncation leaves $q_\nu(\mathbf y)$
unchanged and can only decrease the corresponding delay numerator.
Thus,
\begin{align}
A_\nu^{[N]}(\mathbf y)
\le
A_\nu(\mathbf y)
\le
r q_\nu(\mathbf y)
\; ,
\qquad
0\le\nu<N
\; .
\end{align}
Hence the truncated process is feasible for the $N$-truncated problem,
and therefore
\begin{align}
\operatorname{ARL}_N(\mathbf y)
\le
\Gamma_N(r)
\; .
\end{align}
The full and truncated ARLs differ only by the survival tail:
\begin{align}
\operatorname{ARL}(\mathbf y)
=
\operatorname{ARL}_N(\mathbf y)
+
\sum_{n=N}^{\infty}q_n(\mathbf y)
\; .
\label{eq:app-arl-tail}
\end{align}
Let $j=\lfloor N/m_r\rfloor$. Since $jm_r\le N$, the uniform
tail bound in Lemma~\ref{lem:block-contraction} gives
\begin{align}
\sum_{n=N}^{\infty}q_n(\mathbf y)
&\le
\sum_{n=jm_r}^{\infty}q_n(\mathbf y)
\le
\frac{m_r}{\delta_r}
(1-\delta_r)^j
=
\varepsilon_N(r)
\; .
\end{align}
Combining this inequality with \eqref{eq:app-arl-tail} yields
\begin{align}
\operatorname{ARL}(\mathbf y)
\le
\Gamma_N(r)+\varepsilon_N(r)
\; .
\end{align}
Taking the supremum over all untruncated processes feasible at
risk level $r$ gives
\begin{align}
\Gamma(r)
\le
\Gamma_N(r)+\varepsilon_N(r)
\; .
\end{align}
Together with \eqref{eq:app-finite-lower}, this proves~\eqref{eq:frontier-error-bound}.

\section{Proof of Lemma~\ref{lem:finite-crossing}}
\label{app:finite-crossing}

Define the truncation crossing set
\begin{align}
\mathcal C_{\gamma,N}
:=
\left\{
r\in[1,\gamma]:
\Gamma_N(r)\ge\gamma
\right\}
\; .
\end{align}
We first show that this set is nonempty. Let $m=\lfloor\gamma\rfloor$ and
$\theta=\gamma-m\in[0,1)$. Consider the observation-independent
randomized deadline
\begin{align}
T^{\rm d}
=
\begin{cases}
m,
&
\text{with probability }1-\theta,
\\
m+1,
&
\text{with probability }\theta,
\end{cases}
\label{eq:app-random-deadline}
\end{align}
where the second branch is absent when $\theta=0$. Since
$N\ge\lceil\gamma\rceil$, this stopping rule terminates within truncation depth $N$, and
\begin{align}
\Einf[T^{\rm d}]
=
m+\theta
=
\gamma
\; .
\end{align}
Because the rule is independent of the observations, its conditional
residual delay is no larger than $\gamma$ at every change time for
which survival is positive. Indeed, for $\nu<m$,
\begin{align}
D_\nu(T^{\rm d})
=
\Einf[T^{\rm d}]-\nu
=
\gamma-\nu
\le
\gamma
\; ,
\end{align}
while, if $\theta>0$, the only remaining positive-survival fixed-rate constraint is
$\nu=m$, for which $D_m(T^{\rm d})=1\le\gamma$. All later fixed-rate constraints
disappear in the truncated formulation.
Consequently,
\begin{align}
\Gamma_N(\gamma)
\ge
\gamma
\; ,
\end{align}
and therefore $\mathcal C_{\gamma,N}\neq\varnothing$. 
Before proving closedness of the crossing set, we verify that the maximum
defining $\Gamma_N(r)$ is achieved for every fixed $r\in[1,\gamma]$.
For each $1\le n<N$, define
\begin{align}
K_n
:=
\left\{
f\in L^\infty(P_\infty^n):
0\le f\le 1
\quad
P_\infty^n\text{-almost surely}
\right\}
\; ,
\end{align}
and equip $K_n$ with the weak-* topology of
$L^\infty(P_\infty^n)$. By the Banach--Alaoglu theorem, each $K_n$
is weak-* compact. Hence the finite product
\begin{align}
\prod_{n=1}^{N-1}K_n
\end{align}
is compact in the product weak-* topology.
The truncated survival space $\mathcal Y_N$ is a closed subset of this
product. Indeed, the survival-consistency constraints
\begin{align}
0
\le
y_n
\le
\iota_n y_{n-1}
\qquad
P_\infty^n\text{-almost surely}
\; ,
\qquad
1\le n<N
\; ,
\end{align}
are preserved under coordinatewise weak-* convergence because the lift
maps $\iota_n$ are weak-* continuous and the nonnegative cone is
weak-* closed. Therefore, $\mathcal Y_N$ is compact. For fixed $r\in[1,\gamma]$, define the truncated feasible set
\begin{align}
\mathcal F_{N,\le r}
:=
\left\{
\mathbf y\in\mathcal Y_N:
A_\nu^{[N]}(\mathbf y)
\le
r q_\nu(\mathbf y),
\quad
0\le\nu<N
\right\}
\; .
\end{align}
For every $\nu<N$, the mappings
\begin{align}
\mathbf y
&\longmapsto
q_\nu(\mathbf y)
\; ,
&
\mathbf y
&\longmapsto
A_\nu^{[N]}(\mathbf y)
\end{align}
are continuous under the product weak-* topology, since
$A_\nu^{[N]}$ is a finite sum of integrations against the
$L^1(P_\infty^n)$ functions $L_{\nu,n}$. Hence
$\mathcal F_{N,\le r}$ is closed in $\mathcal Y_N$ and is therefore
compact. It is also nonempty, since the process
\begin{align}
y_0
&=
1
\; ,
&
y_n
&=
0
\; ,
\qquad
1\le n<N
\; ,
\end{align}
belongs to $\mathcal F_{N,\le r}$.
Finally, the truncated objective
\begin{align}
\operatorname{ARL}_N(\mathbf y)
&=
\sum_{n=0}^{N-1}
q_n(\mathbf y)
\end{align}
is continuous. It therefore achieves its maximum on
$\mathcal F_{N,\le r}$. Consequently, for every
$r\in[1,\gamma]$, there exists
$\mathbf y_{r,N}^\star\in\mathcal F_{N,\le r}$ such that
\begin{align}
\Gamma_N(r)
&=
\operatorname{ARL}_N\bigl(\mathbf y_{r,N}^\star\bigr)
\; .
\end{align}
We next show that the crossing set is closed. Let
$\{r_k\}\subseteq\mathcal C_{\gamma,N}$ satisfy $r_k\to r$.
By the achievability established above, for every $k$ choose an optimizer
\begin{align}
\mathbf y^{(k)}
&\in
\mathcal F_{N,\le r_k}
\end{align}
such that
\begin{align}
\operatorname{ARL}_N\bigl(\mathbf y^{(k)}\bigr)
&=
\Gamma_N(r_k)
\ge
\gamma
\; .
\end{align}
The truncated survival space is compact under the finite product
weak-* topology. Hence, along a subsequence,
$\mathbf y^{(k)}$ converges coordinate by coordinate to some
$\mathbf y^\star\in\mathcal Y_N$.
All quantities appearing in the truncated problem depend on only
finitely many coordinates and are continuous under this convergence.
Thus, for every $0\le\nu<N$,
\begin{align}
A_\nu^{[N]}(\mathbf y^\star)
&=
\lim_{k\to\infty}
A_\nu^{[N]}(\mathbf y^{(k)})
\le
\lim_{k\to\infty}
r_kq_\nu(\mathbf y^{(k)})
=
r q_\nu(\mathbf y^\star)
\; ,
\end{align}
and similarly
\begin{align}
\operatorname{ARL}_N(\mathbf y^\star)
=
\lim_{k\to\infty}
\operatorname{ARL}_N(\mathbf y^{(k)})
\ge
\gamma
\; .
\end{align}
Hence $r\in\mathcal C_{\gamma,N}$. The set
$\mathcal C_{\gamma,N}$ is therefore a nonempty closed subset of
the compact interval $[1,\gamma]$, so its minimum exists. It remains to be shown that this minimum is strictly greater than 1.
Suppose $r=1$. The change-time constraint corresponding to
$\nu=0$ gives

\begin{align}
A_0^{[N]}(\mathbf y)
=
1+
\sum_{n=1}^{N-1}
\mathbb E_0[y_n]
\le
q_0(\mathbf y)
=
1
\; .
\end{align}
Since every term in the sum is nonnegative, we must have
$\mathbb E_0[y_n]=0$ for every $1\le n<N$. Hence $y_n=0$,
$P_0^n$-almost surely. Due to the equivalence of the pre- and post-change distributions, the finite product laws are equivalent, so the same holds
$P_\infty^n$-almost surely. Consequently,
$q_n(\mathbf y)=0$ for every $n\ge1$, and therefore
\begin{align}
\operatorname{ARL}_N(\mathbf y)
=
1
\; .
\end{align}
Thus $\Gamma_N(1)=1<\gamma$, implying
$r_{\gamma,N}>1$. Since the crossing set is contained in
$[1,\gamma]$, we also have $r_{\gamma,N}\le\gamma$, completing
the proof.

\section{Proof of Theorem~\ref{thm:finite-convergence}}
\label{app:finite-convergence}

We first establish convergence of the truncated crossing values. Recall
that
\begin{align}
r_{\gamma,N}
\ge
r_\gamma
=
V_{\sf P}(\gamma)
\; ,
\label{eq:app-crossing-lower}
\end{align}
because $\Gamma_N(r)\le\Gamma(r)$. Furthermore,
$\Gamma_N(r)\le\Gamma_{N+1}(r)$, since every $N$-truncated process
can be embedded into the $(N+1)$-truncated problem by setting its
additional survival coordinate equal to zero. Hence
\begin{align}
r_{\gamma,N+1}
\le
r_{\gamma,N}
\; .
\label{eq:app-crossing-monotone}
\end{align}
Thus, $\{r_{\gamma,N}\}$ is nonincreasing and bounded below by
$r_\gamma$. It remains to show that its limit cannot exceed
$r_\gamma$.

Let $T_\gamma^\star$ be the exact minimizer from
Theorem~\ref{thm:main}, so that
\begin{align}
\Einf[T_\gamma^\star]
&=
\gamma
\; ,
&
J_{\sf P}(T_\gamma^\star)
&=
r_\gamma
\; .
\end{align}
For each $N$, truncate this rule according to
\begin{align}
T_\gamma^{[N]}
:=
T_\gamma^\star\wedge N
\; .
\end{align}
Define the ARL lost through truncation by
\begin{align}
e_N
&:=
\gamma-\Einf[T_\gamma^{[N]}]
=
\sum_{n=N}^{\infty}
q_n(T_\gamma^\star)
\; .
\label{eq:app-eN}
\end{align}
Since $T_\gamma^\star$ has finite mean, $e_N\downarrow0$.
Furthermore, because $J_{\sf P}(T_\gamma^\star)=r_\gamma$, the uniform
tail bound gives
\begin{align}
0
\le
e_N
\le
\varepsilon_N(r_\gamma)
\; .
\label{eq:app-eN-bound}
\end{align}
Truncation leaves every survival probability $q_\nu$ unchanged for
$\nu<N$ and can only decrease the corresponding delay numerator.
Therefore, the zero-extended rule $T_\gamma^{[N]}$ satisfies
\begin{align}
A_\nu(T_\gamma^{[N]})
\le
r_\gamma q_\nu(T_\gamma^{[N]})
\; ,
\qquad
\nu\ge0
\; ,
\label{eq:app-truncated-risk}
\end{align}
where both sides are zero after the truncation point. We now restore the ARL lost through truncation. Construct a shifted copy $S_\gamma^{[N]}$ that ignores the first observation and then
runs a law-equivalent copy of $T_\gamma^{[N]}$ on
$X_2,X_3,\ldots$, using fresh auxiliary randomization. Under the
no-change law,
\begin{align}
\Einf[S_\gamma^{[N]}]
=
\Einf[T_\gamma^{[N]}]+1
\; .
\label{eq:app-shift-mean}
\end{align}
For every $\nu\ge1$,
\begin{align}
q_\nu(S_\gamma^{[N]})
&=
q_{\nu-1}(T_\gamma^{[N]})
\; ,
\\
A_\nu(S_\gamma^{[N]})
&=
A_{\nu-1}(T_\gamma^{[N]})
\; .
\label{eq:app-shift-rows}
\end{align}
Hence all such fixed-rate constraints satisfy the same risk bound $r_\gamma$. For
$\nu=0$, since $q_0=1$,
\begin{align}
A_0(S_\gamma^{[N]})
&=
1+A_0(T_\gamma^{[N]}) \le
r_\gamma+1
\; .
\label{eq:app-shift-row-zero}
\end{align}
For sufficiently large $N$, we have $0\le e_N\le1$. Independently of
the observations, select $S_\gamma^{[N]}$ with probability $e_N$ and
$T_\gamma^{[N]}$ with probability $1-e_N$. Denote the resulting
zero-extended rule by $\widehat T_\gamma^{[N]}$. Using
\eqref{eq:app-shift-mean},
\begin{align}
\Einf[\widehat T_\gamma^{[N]}]
&=
\Einf[T_\gamma^{[N]}]+e_N =
\gamma
\; .
\label{eq:app-restored-arl}
\end{align}
For every $\nu\ge1$, both component rules satisfy the risk bound
$r_\gamma$. At $\nu=0$,
\eqref{eq:app-truncated-risk} and
\eqref{eq:app-shift-row-zero} give
\begin{align}
A_0(\widehat T_\gamma^{[N]})
&\le
(1-e_N)r_\gamma
+
e_N(r_\gamma+1) =
r_\gamma+e_N
\; .
\end{align}
Thus, $\widehat T_\gamma^{[N]}$ is feasible for the
$(N+1)$-truncated problem at risk level $r_\gamma+e_N$ and has ARL
exactly $\gamma$. If $r_\gamma=\gamma$, then \eqref{eq:app-crossing-lower} together
with $r_{\gamma,N}\le\gamma$ gives
$r_{\gamma,N}=\gamma=r_\gamma$ for every admissible $N$. Suppose
therefore that $r_\gamma<\gamma$. Since $e_N\to0$, for all
sufficiently large $N$,
\begin{align}
r_\gamma+e_N
\le
\gamma
\; .
\end{align}
By the definition of the truncated crossing,
\begin{align}
r_{\gamma,N+1}
\le
r_\gamma+e_N
\; .
\end{align}
Combining this with \eqref{eq:app-crossing-lower} yields
\begin{align}
0
\le
r_{\gamma,N+1}-r_\gamma
\le
e_N
\; .
\end{align}
Using $r_\gamma=V_{\sf P}(\gamma)$ and
\eqref{eq:app-eN-bound},
\begin{align}
0
\le
r_{\gamma,N+1}-V_{\sf P}(\gamma)
\le
e_N
\le
\varepsilon_N\!\left(V_{\sf P}(\gamma)\right)
\; .
\label{eq:app-crossing-error}
\end{align}
Since the right-hand side converges to zero and
$\{r_{\gamma,N}\}$ is nonincreasing,
\begin{align}
r_{\gamma,N}
\downarrow
V_{\sf P}(\gamma)
\; .
\label{eq:app-crossing-convergence}
\end{align}
It remains to establish the corresponding statement for actual
admissible stopping rules. Fix $N\ge\lceil\gamma\rceil$, and let
$\mathbf y_{\gamma,N}^\star$ be an optimizer of
$\Gamma_N(r_{\gamma,N})$. By the definition of $r_{\gamma,N}$,
\begin{align}
\operatorname{ARL}_N(\mathbf y_{\gamma,N}^\star)
=
\Gamma_N(r_{\gamma,N})
\ge
\gamma
\; .
\label{eq:app-finite-opt-arl}
\end{align}
Extend $\mathbf y_{\gamma,N}^\star$ by zero beyond the truncation
depth and use Lemma~\ref{lemma:survival-completeness} to realize it
as a randomized stopping rule. By
Lemma~\ref{lem:finite-crossing},
\begin{align}
r_{\gamma,N}
>
1
\; .
\end{align}
Moreover, \eqref{eq:app-finite-opt-arl} and $\gamma>1$ imply that its
first zero-survival time is strictly larger than one. Hence
Lemma~\ref{lem:tail-completion} converts it into an admissible
randomized stopping rule $\overline T_{\gamma,N}$ satisfying
\begin{align}
\Einf[\overline T_{\gamma,N}]
&=
\Gamma_N(r_{\gamma,N})
\ge
\gamma
\; ,
\\
J_{\sf P}(\overline T_{\gamma,N})
&\le
r_{\gamma,N}
\; .
\label{eq:app-completed-rule}
\end{align}
If its ARL equals $\gamma$, set
$T_{\gamma,N}=\overline T_{\gamma,N}$. Otherwise,
Lemma~\ref{lem:arl-calibration} produces an admissible randomized
stopping rule $T_{\gamma,N}$ such that
\begin{align}
\Einf[T_{\gamma,N}]
&=
\gamma
\; ,
&
J_{\sf P}(T_{\gamma,N})
&\le
r_{\gamma,N}
\; .
\label{eq:app-near-optimal-rule}
\end{align}
Since $T_{\gamma,N}$ is feasible for Pollak's original minimax
problem,
\begin{align}
V_{\sf P}(\gamma)
\le
J_{\sf P}(T_{\gamma,N})
\le
r_{\gamma,N}
\; .
\end{align}
Therefore,
\begin{align}
0
\le
J_{\sf P}(T_{\gamma,N})-V_{\sf P}(\gamma)
\le
r_{\gamma,N}-V_{\sf P}(\gamma)
\longrightarrow
0
\; ,
\end{align}
where the convergence follows from
\eqref{eq:app-crossing-convergence}. This proves all claims of the
theorem.

\section{Proof of Theorem \ref{thm:finite-dual-structure}}

\label{app:finite-dual-threshold}

Fix $N\ge2$ and $r>1$. For $\mathbf y\in\mathcal Y_N$, define
\begin{align}
J_N(\mathbf y)
&:=
\operatorname{ARL}_N(\mathbf y)
=
\sum_{n=0}^{N-1}q_n(\mathbf y)
\; ,
\label{eq:app-dual-J}
\\
g_\nu(\mathbf y)
&:=
A_\nu^{[N]}(\mathbf y)-r q_\nu(\mathbf y)
\; ,
\qquad
0\le\nu<N
\; .
\label{eq:app-dual-g}
\end{align}
Thus,
\begin{align}
\Gamma_N(r)
=
\max_{\mathbf y\in\mathcal Y_N}
\left\{
J_N(\mathbf y):
g_\nu(\mathbf y)\le0,\
0\le\nu<N
\right\}
\; .
\label{eq:app-dual-primal}
\end{align}
We prove the result in five steps.

\subsection{Step 1: Primal achievement and strict feasibility}

For each $1\le n<N$, regard $y_n$ as an element of
$L^\infty(P_\infty^n)$ equipped with the weak-* topology. As in the
proof of Lemma~\ref{lem:survival-compact}, the set
\begin{align}
\mathcal K_n
:=
\left\{
f\in L^\infty(P_\infty^n):
0\le f\le1
\right\}
\end{align}
is weak-* compact. The truncated survival space $\mathcal Y_N$ is
a closed convex subset of the finite product
\begin{align}
\prod_{n=1}^{N-1}\mathcal K_n
\; .
\end{align}
Indeed, if $\iota_n$ denotes the lift of a function of
$x_{1:n-1}$ to a function of $x_{1:n}$ that ignores the final
coordinate, the survival constraints are
\begin{align}
0
\le
y_n
\le
\iota_n y_{n-1}
\qquad
P_\infty^n\text{-almost surely}
\; ,
\qquad
1\le n<N
\; ,
\label{eq:app-dual-survival-constraints}
\end{align}
and the proof of Lemma~\ref{lem:survival-compact} shows that these
constraints are preserved under coordinatewise weak-* limits. Hence $\mathcal Y_N$ is compact. The quantities $q_\nu(\mathbf y)$ and
$A_\nu^{[N]}(\mathbf y)$ are continuous on $\mathcal Y_N$ under this
topology, because they are finite sums of integrals of the coordinates
$y_n$ against the $L^1(P_\infty^n)$ functions $1$ and
$L_{\nu,n}$. Therefore, the feasible set in
\eqref{eq:app-dual-primal} is compact, and the primal maximum is
achieved.

We next exhibit a strictly feasible point. Choose
\begin{align}
0
<
a
<
1-\frac{1}{r}
\end{align}
and define the observation-independent survival process
\begin{align}
y_n^{\rm s}(x_{1:n})
:=
a^n
\; ,
\qquad
0\le n<N
\; .
\label{eq:app-dual-slater-process}
\end{align}
Since
\begin{align}
\Einf[L_{\nu,n}]
=
1
\; ,
\qquad
\nu\le n
\; ,
\end{align}
we have
\begin{align}
q_\nu(\mathbf y^{\rm s})
&=
a^\nu
\; ,
\\
A_\nu^{[N]}(\mathbf y^{\rm s})
&=
\sum_{n=\nu}^{N-1}
\Einf[L_{\nu,n}a^n]
\\
&=
a^\nu
\sum_{k=0}^{N-1-\nu}a^k
<
\frac{a^\nu}{1-a}
<
r a^\nu
=
r q_\nu(\mathbf y^{\rm s})
\; .
\label{eq:app-dual-strict-feasibility}
\end{align}
Thus every truncated change-time constraint is satisfied
strictly.

\subsection{Step 2: Strong duality and dual achievement}

Let
\begin{align}
v_N
:=
\Gamma_N(r)
\end{align}
and write
\begin{align}
g(\mathbf y)
:=
\bigl(
g_0(\mathbf y),\ldots,g_{N-1}(\mathbf y)
\bigr)
\; .
\end{align}
Consider the lower image
\begin{align}
\mathcal C
:=
\left\{
\left(
g(\mathbf y)+s,\
J_N(\mathbf y)-t
\right):
\mathbf y\in\mathcal Y_N,\
s\in\mathbb R_+^N,\
t\in\mathbb R_+
\right\}
\subseteq
\mathbb R^{N+1}
\; .
\label{eq:app-dual-lower-image}
\end{align}
The set $\mathcal C$ is convex. It is also closed: the image
\begin{align}
\left\{
\left(
g(\mathbf y),J_N(\mathbf y)
\right):
\mathbf y\in\mathcal Y_N
\right\}
\end{align}
is compact, and $\mathcal C$ is the sum of this compact set and the
closed cone
\begin{align}
\mathbb R_+^N
\times
\mathbb R_-
\; .
\end{align}
Because the primal maximum is achieved, there exists a feasible
$\mathbf y^\star$ satisfying $J_N(\mathbf y^\star)=v_N$. Hence
\begin{align}
(0,v_N)
\in
\mathcal C
\; .
\end{align}
On the other hand,
\begin{align}
(0,v_N+\eta)
\notin
\mathcal C
\; ,
\qquad
\eta>0
\; ,
\end{align}
because the first coordinate equal to zero requires
$g(\mathbf y)\le0$, while the second coordinate cannot then exceed
the optimal feasible value $v_N$. Therefore, $(0,v_N)$ is a boundary
point of $\mathcal C$. The strict feasible point in \eqref{eq:app-dual-strict-feasibility}
also implies that the projection of $\mathcal C$ onto its first $N$
coordinates contains a neighborhood of the origin. Indeed,
$g_\nu(\mathbf y^{\rm s})<0$ for every $\nu$, and all sufficiently
small perturbations of the origin can be written as
$g(\mathbf y^{\rm s})+s$ with $s\in\mathbb R_+^N$. By the supporting-hyperplane theorem, there exists a nonzero vector
$(c,b)\in\mathbb R^N\times\mathbb R$ such that
\begin{align}
c^\top u+bz
\le
b v_N
\; ,
\qquad
(u,z)\in\mathcal C
\; .
\label{eq:app-dual-support}
\end{align}
Since $\mathcal C$ is closed under increasing any coordinate of $u$,
we must have
\begin{align}
c_\nu
\le
0
\; ,
\qquad
0\le\nu<N
\; .
\end{align}
Since $\mathcal C$ is closed under decreasing $z$, we must have
$b\ge0$. In fact $b>0$. Otherwise, \eqref{eq:app-dual-support} would
give $c^\top u\le0$ on a neighborhood of the origin, which forces
$c=0$ and contradicts the fact that $(c,b)$ is nonzero. Normalize $b=1$ and define
\begin{align}
\omega_\nu^\star
:=
-c_\nu
\ge
0
\; ,
\qquad
0\le\nu<N
\; .
\label{eq:app-dual-support-weights}
\end{align}
Applying \eqref{eq:app-dual-support} to
$(u,z)=(g(\mathbf y),J_N(\mathbf y))$ gives
\begin{align}
J_N(\mathbf y)
+
\sum_{\nu=0}^{N-1}
\omega_\nu^\star
\left[
r q_\nu(\mathbf y)
-
A_\nu^{[N]}(\mathbf y)
\right]
\le
v_N
\; ,
\qquad
\mathbf y\in\mathcal Y_N
\; .
\label{eq:app-dual-support-lagrangian}
\end{align}
For $\omega=(\omega_0,\ldots,\omega_{N-1})\ge0$, define
\begin{align}
\mathcal L_N(\mathbf y,\omega)
&:=
J_N(\mathbf y)
+
\sum_{\nu=0}^{N-1}
\omega_\nu
\left[
r q_\nu(\mathbf y)
-
A_\nu^{[N]}(\mathbf y)
\right]
\; ,
\label{eq:app-dual-Lagrangian}
\\
d_N(\omega)
&:=
\sup_{\mathbf y\in\mathcal Y_N}
\mathcal L_N(\mathbf y,\omega)
\; .
\label{eq:app-dual-function}
\end{align}
For every primal-feasible $\mathbf y$ and every $\omega\ge0$,
\begin{align}
\mathcal L_N(\mathbf y,\omega)
\ge
J_N(\mathbf y)
\; ,
\end{align}
so weak duality gives
\begin{align}
d_N(\omega)
\ge
v_N
\; ,
\qquad
\omega\ge0
\; .
\label{eq:app-dual-weak}
\end{align}
By \eqref{eq:app-dual-support-lagrangian},
\begin{align}
d_N(\omega^\star)
\le
v_N
\; .
\end{align}
Combining this with \eqref{eq:app-dual-weak} yields
\begin{align}
\Gamma_N(r)
=
v_N
=
\min_{\omega\ge0}
d_N(\omega)
=
d_N(\omega^\star)
\; .
\label{eq:app-dual-strong}
\end{align}
Thus, strong duality holds and the dual minimum is achieved. For completeness, strict feasibility also gives a direct coercivity
bound. Since there are finitely many constraints, there exists
$\epsilon>0$ such that
\begin{align}
r q_\nu(\mathbf y^{\rm s})
-
A_\nu^{[N]}(\mathbf y^{\rm s})
\ge
\epsilon
\; ,
\qquad
0\le\nu<N
\; .
\end{align}
Consequently,
\begin{align}
d_N(\omega)
\ge
J_N(\mathbf y^{\rm s})
+
\epsilon
\|\omega\|_1
\; ,
\qquad
\omega\ge0
\; .
\label{eq:app-dual-coercive}
\end{align}
Hence every dual sublevel set is bounded. Since $d_N$ is lower
semicontinuous as the supremum of continuous affine functions of
$\omega$, its finite sublevel sets are compact.

\subsection{Step 3: Bellman representation of the dual objective}

Fix $\omega\ge0$, set $\omega_N=0$, and define
\begin{align}
M_0
&=
\omega_0
\; ,
\\
M_n
&=
\omega_n+\Lambda_n M_{n-1}
\; ,
\qquad
1\le n<N
\; .
\label{eq:app-dual-M-recursion}
\end{align}
Equivalently,
\begin{align}
M_n
=
\sum_{\nu=0}^{n}
\omega_\nu L_{\nu,n}
\; ,
\qquad
0\le n<N
\; .
\label{eq:app-dual-M-sum}
\end{align}
Regrouping the Lagrangian exactly as in Section~6 gives
\begin{align}
\mathcal L_N(\mathbf y,\omega)
=
1+(r-1)\omega_0
+
\sum_{n=1}^{N-1}
\Einf
\left[
\left(
1+r\omega_n-M_n
\right)y_n
\right]
\; .
\label{eq:app-dual-L-regrouped}
\end{align}
Set $W_N\equiv0$ and, working backward for
$n=N-1,\ldots,1$, define
\begin{align}
H_n(m)
&:=
1+r\omega_n-m
+
\Einf
\left[
W_{n+1}
\left(
\omega_{n+1}+\Lambda m
\right)
\right]
\; ,
\label{eq:app-dual-H}
\\
W_n(m)
&:=
\left[
H_n(m)
\right]_+
\; .
\label{eq:app-dual-W}
\end{align}
Here $\Lambda$ denotes a generic one-observation likelihood ratio
under $P_\infty$. Choose compatible measurable representatives of a survival process
$\mathbf y\in\mathcal Y_N$ and define its continuation kernels by
\begin{align}
\kappa_n(x_{1:n})
:=
\begin{cases}
\displaystyle
\frac{y_n(x_{1:n})}
     {y_{n-1}(x_{1:n-1})},
&
y_{n-1}(x_{1:n-1})>0,
\\[2ex]
0,
&
y_{n-1}(x_{1:n-1})=0.
\end{cases}
\label{eq:app-dual-kappa}
\end{align}
Then
\begin{align}
0
\le
\kappa_n
\le
1
\qquad\text{and}\qquad
y_n
=
y_{n-1}\kappa_n
\; ,
\label{eq:app-dual-kappa-properties}
\end{align}
where $y_{n-1}$ is understood as its lift to the
$n$-observation space. Under $P_\infty$, $\Lambda_{n+1}$ is independent of $X_{1:n}$.
Therefore,
\begin{align}
\Einf
\left[
W_{n+1}(M_{n+1})
\mid
X_{1:n}
\right]
=
\Einf
\left[
W_{n+1}
\left(
\omega_{n+1}+\Lambda M_n
\right)
\right]
\; .
\label{eq:app-dual-Markov}
\end{align}
Using \eqref{eq:app-dual-kappa-properties} and
\eqref{eq:app-dual-Markov},
\begin{align}
&
\Einf
\left[
\left(
1+r\omega_n-M_n
\right)y_n
\right]
+
\Einf
\left[
W_{n+1}(M_{n+1})y_n
\right]
\\
&\qquad=
\Einf
\left[
y_{n-1}\kappa_n H_n(M_n)
\right]
\\
&\qquad\le
\Einf
\left[
y_{n-1}W_n(M_n)
\right]
\; ,
\label{eq:app-dual-Bellman-inequality}
\end{align}
where the final inequality follows from
\begin{align}
\kappa h
\le
[h]_+
\; ,
\qquad
0\le\kappa\le1
\; .
\end{align}
Summing \eqref{eq:app-dual-Bellman-inequality} backward over
$n=1,\ldots,N-1$ gives the telescoping bound
\begin{align}
\mathcal L_N(\mathbf y,\omega)
\le
1+(r-1)\omega_0
+
\Einf
\left[
W_1
\left(
\omega_1+\Lambda_1\omega_0
\right)
\right]
\; .
\label{eq:app-dual-upper-D}
\end{align}
Conversely, equality is achieved by choosing
\begin{align}
\kappa_n
=
\begin{cases}
1,
&
H_n(M_n)>0,
\\
0,
&
H_n(M_n)<0,
\\
\text{any measurable value in }[0,1],
&
H_n(M_n)=0,
\end{cases}
\label{eq:app-dual-Bellman-policy}
\end{align}
and then defining the survival process recursively by
$y_0=1$ and $y_n=y_{n-1}\kappa_n$. This construction belongs to
$\mathcal Y_N$ and makes every inequality in
\eqref{eq:app-dual-Bellman-inequality} an equality. Hence
\begin{align}
d_N(\omega)
=
1+(r-1)\omega_0
+
\Einf
\left[
W_1
\left(
\omega_1+\Lambda_1\omega_0
\right)
\right]
=
\mathfrak D_N(\omega)
\; .
\label{eq:app-dual-D-equality}
\end{align}
Combining \eqref{eq:app-dual-strong} and
\eqref{eq:app-dual-D-equality} gives
\begin{align}
\Gamma_N(r)
=
\min_{\omega\ge0}
\mathfrak D_N(\omega)
\; .
\label{eq:app-dual-final-strong}
\end{align}
\subsection{Step 4: Bellman regularity and the threshold boundaries}

We show by backward induction that each $W_n$ is bounded, continuous,
and nonincreasing on $[0,\infty)$. The claim is immediate for
$W_N\equiv0$. Suppose it holds for $W_{n+1}$. Since
$W_{n+1}$ is bounded and continuous, bounded convergence gives
continuity of
\begin{align}
m
\longmapsto
\Einf
\left[
W_{n+1}
\left(
\omega_{n+1}+\Lambda m
\right)
\right]
\; .
\end{align}
The same mapping is nonincreasing because $W_{n+1}$ is nonincreasing
and $\Lambda\ge0$. Hence, for $m_2>m_1$,
\begin{align}
H_n(m_2)-H_n(m_1)
&=
-(m_2-m_1)
\\
&\quad+
\Einf
\left[
W_{n+1}
\left(
\omega_{n+1}+\Lambda m_2
\right)
-
W_{n+1}
\left(
\omega_{n+1}+\Lambda m_1
\right)
\right]
\\
&\le
-(m_2-m_1)
<
0
\; .
\label{eq:app-dual-H-strict}
\end{align}
Thus $H_n$ is continuous and strictly decreasing. Furthermore,
\begin{align}
H_n(0)
=
1+r\omega_n
+
\Einf
\left[
W_{n+1}(\omega_{n+1})
\right]
>
0
\; ,
\label{eq:app-dual-H-zero}
\end{align}
while
\begin{align}
H_n(m)
&\le
1+r\omega_n
+
\|W_{n+1}\|_\infty
-
m \longrightarrow
-\infty
\; ,
\qquad
m\to\infty
\; .
\label{eq:app-dual-H-infty}
\end{align}
Therefore, there exists a unique
\begin{align}
b_{n,N}(\omega)
\in
(0,\infty)
\end{align}
such that
\begin{align}
H_n\bigl(b_{n,N}(\omega)\bigr)
=
0
\; .
\label{eq:app-dual-boundary-root}
\end{align}
Since $W_n=[H_n]_+$, it follows that $W_n$ is bounded, continuous,
and nonincreasing. This completes the backward induction.

\subsection{Step 5: Saddle compatibility, threshold structure, and complementary slackness}

Let $\omega^\star$ be a dual minimizer and let $\mathbf y^\star$ be
a primal optimizer. Since $\mathbf y^\star$ is feasible,
\begin{align}
J_N(\mathbf y^\star)
&\le
\mathcal L_N(\mathbf y^\star,\omega^\star) \le
d_N(\omega^\star) =
\Gamma_N(r)
=
J_N(\mathbf y^\star)
\; .
\label{eq:app-dual-saddle-chain}
\end{align}
Hence equality holds throughout. In particular,
\begin{align}
0
&=
\mathcal L_N(\mathbf y^\star,\omega^\star)
-
J_N(\mathbf y^\star)
\\
&=
\sum_{\nu=0}^{N-1}
\omega_\nu^\star
\left[
r q_\nu(\mathbf y^\star)
-
A_\nu^{[N]}(\mathbf y^\star)
\right]
\; .
\label{eq:app-dual-complementarity-sum}
\end{align}
Every term in the final sum is nonnegative. Therefore,
\begin{align}
\omega_\nu^\star
\left[
r q_\nu(\mathbf y^\star)
-
A_\nu^{[N]}(\mathbf y^\star)
\right]
=
0
\; ,
\qquad
0\le\nu<N
\; ,
\label{eq:app-dual-complementarity}
\end{align}
which proves complementary slackness. Equality in
\eqref{eq:app-dual-saddle-chain} also shows that
$\mathbf y^\star$ maximizes
$\mathcal L_N(\cdot,\omega^\star)$ over $\mathcal Y_N$.

Choose compatible measurable representatives of $\mathbf y^\star$
and let $\kappa_n^\star$ denote the corresponding continuation
kernels defined as in \eqref{eq:app-dual-kappa}. The Bellman
calculation above yields the exact duality-gap identity
\begin{align}
0
&=
d_N(\omega^\star)
-
\mathcal L_N(\mathbf y^\star,\omega^\star)
=
\sum_{n=1}^{N-1}
\Einf
\left[
y_{n-1}^\star
\left\{
W_n(M_n^\star)
-
\kappa_n^\star H_n(M_n^\star)
\right\}
\right]
\; .
\label{eq:app-dual-gap-identity}
\end{align}
Each integrand in \eqref{eq:app-dual-gap-identity} is nonnegative.
Therefore, $P_\infty^n$-almost surely on histories with
$y_{n-1}^\star>0$,
\begin{align}
H_n(M_n^\star)>0
&\quad\Longrightarrow\quad
\kappa_n^\star=1
\; ,
\label{eq:app-dual-kappa-continue}
\\
H_n(M_n^\star)<0
&\quad\Longrightarrow\quad
\kappa_n^\star=0
\; .
\label{eq:app-dual-kappa-stop}
\end{align}
For the minimizing vector $\omega^\star$, define
\begin{align}
b_{n,N}
:=
b_{n,N}(\omega^\star)
\; .
\end{align}
Since $H_n$ is strictly decreasing and has its unique zero at
$b_{n,N}$,
\begin{align}
M_n^\star
<
b_{n,N}
&\quad\Longrightarrow\quad
\kappa_n^\star
=
1
\qquad
\text{(continue)}
\; ,
\label{eq:app-dual-threshold-continue}
\\
M_n^\star
>
b_{n,N}
&\quad\Longrightarrow\quad
\kappa_n^\star
=
0
\qquad
\text{(stop)}
\; .
\label{eq:app-dual-threshold-stop}
\end{align}
On the tie set
\begin{align}
M_n^\star
=
b_{n,N}
\end{align}
we have $H_n(M_n^\star)=0$, so either action has the same
Lagrangian value. The continuation kernel inherited from the primal
optimizer,
\begin{align}
\kappa_n^\star(x_{1:n})
:=
\begin{cases}
\displaystyle
\frac{
y_n^\star(x_{1:n})
}{
y_{n-1}^\star(x_{1:n-1})
},
&
y_{n-1}^\star(x_{1:n-1})>0,
\\[2ex]
0,
&
y_{n-1}^\star(x_{1:n-1})=0,
\end{cases}
\label{eq:app-dual-tie-kernel}
\end{align}
is measurable, takes values in $[0,1]$, and preserves both primal feasibility and optimality. It may depend on the complete observation
history on the tie set. Histories with zero surviving mass can be modified arbitrarily.

Thus, the single primal optimizer $\mathbf y^\star$ forms a saddle pair with $\omega^\star$ and, simultaneously for every
$1\le n<N$, its off-boundary decision on histories satisfying
$y_{n-1}^\star>0$ is determined by the recursively updated statistic $M_n^\star$ and the time-dependent boundary $b_{n,N}$. On boundary
ties, the rule uses the saddle-compatible continuation kernel
inherited from $\mathbf y^\star$; histories carrying zero surviving mass may be defined arbitrarily. Together with \eqref{eq:app-dual-final-strong} and \eqref{eq:app-dual-complementarity}, this proves the theorem.

\section{Proof of Theorem~\ref{thm:floor-bound}}
\label{app:floor-bound}

The significance of Assumption~\ref{ass:likelihood-floor} appears after
a change at time $\nu$. Over the next $k$ observations,
\begin{align}
L_{\nu,\nu+k}
=
\prod_{j=\nu+1}^{\nu+k}\Lambda_j
\ge
c^k
\qquad
P_\infty\text{-almost surely}
\; .
\label{eq:block-likelihood-floor}
\end{align}
Hence, for every admissible stopping rule $T$,
\begin{align}
\mathbb P_\nu
\left(
T>\nu+k
\mid
T>\nu
\right)
&=
\frac{
\Einf
\left[
L_{\nu,\nu+k}
\mathbf 1_{\{T>\nu+k\}}
\right]
}{
q_\nu(T)
}
\ge
c^k
\frac{q_{\nu+k}(T)}{q_\nu(T)}
\; .
\label{eq:floor-survival-transfer}
\end{align}
Summing these conditional survival probabilities gives the following
reduction.

\begin{lemma}[Likelihood-floor reduction]
\label{lem:floor-reduction}
Under Assumption~\ref{ass:likelihood-floor}, every admissible stopping
rule $T$ satisfies
\begin{align}
D_\nu(T)
\ge
\frac{1}{q_\nu(T)}
\sum_{k=0}^{\infty}
c^kq_{\nu+k}(T) \; ,
\qquad
\nu\ge0
\; .
\label{eq:floor-delay-reduction}
\end{align}
\end{lemma}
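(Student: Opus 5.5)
The plan is to write the Pollak delay as a sum of conditional survival probabilities and then bound each term from below using the floor transfer inequality \eqref{eq:floor-survival-transfer}. Fix an admissible $T$ and $\nu\ge0$. Since $T$ is admissible, $q_\nu(T)>0$, so $D_\nu(T)$ is defined. Starting from \eqref{eq:A} and \eqref{eq:D},
\begin{align*}
D_\nu(T)
=
\frac{1}{q_\nu(T)}\sum_{n=\nu}^{\infty}\mathbb P_\nu(T>n)
=
\sum_{k=0}^{\infty}\mathbb P_\nu\left(T>\nu+k\mid T>\nu\right)
\; .
\end{align*}
Here I use $\{T>\nu+k\}\subseteq\{T>\nu\}$, so $\mathbb P_\nu(T>\nu+k)=q_\nu(T)\,\mathbb P_\nu(T>\nu+k\mid T>\nu)$, together with the fact that $\mathbb P_\nu(T>\nu)=q_\nu(T)$ because $\mathbb P_\nu$ and $\mathbb P_\infty$ agree on $\mathcal F_\nu$.

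Next I justify \eqref{eq:floor-survival-transfer} carefully, since this is where the change of measure enters. I use the survival representation \eqref{eq:survival-under-nu}. The law of $X_{1:\nu+k}$ under $\mathbb P_\nu$ has density $L_{\nu,\nu+k}$ with respect to $P_\infty^{\nu+k}$, and the auxiliary randomization is independent of the observations under both laws. Therefore
\begin{align*}
\mathbb P_\nu(T>\nu+k)
=
\mathbb E_\nu[y_{\nu+k}]
=
\Einf\left[L_{\nu,\nu+k}\,y_{\nu+k}(X_{1:\nu+k})\right]
\; .
\end{align*}
By \eqref{eq:block-likelihood-floor}, $L_{\nu,\nu+k}\ge c^k$ holds $P_\infty$-a.s. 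Because $y_{\nu+k}\ge0$, this yields
\begin{align*}
\mathbb P_\nu(T>\nu+k)\ge c^k\,\Einf[y_{\nu+k}]=c^k q_{\nu+k}(T)
\; .
\end{align*}
The case $k=0$ is the trivial equality $L_{\nu,\nu}=1$. Note that no absolute continuity of $P_\infty$ with respect to $P_0$ is needed, only $P_0\ll P_\infty$, which is exactly what makes the density representation valid.

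Finally, I divide by $q_\nu(T)>0$ and sum over $k\ge0$. All terms are nonnegative, so the interchange of sums is harmless by Tonelli, and the resulting inequality holds even when either side is infinite. This gives \eqref{eq:floor-delay-reduction}.

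The only step requiring care is the change-of-measure identity in the presence of randomization. I would handle it by conditioning on $X_{1:\nu+k}$ through $y_{\nu+k}$ rather than working with the event $\{T>\nu+k\}$ directly. This is exactly what Lemma~\ref{lemma:survival-completeness} and \eqref{eq:survival-under-nu} provide, so I do not expect a genuine obstacle.
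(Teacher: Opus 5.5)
Your proposal is correct and follows essentially the same route as the paper: write $D_\nu(T)$ as the sum over $k$ of $\mathbb P_\nu(T>\nu+k\mid T>\nu)$, lower-bound each term by changing measure with $L_{\nu,\nu+k}\ge c^k$, and sum. The only cosmetic difference is that you change measure against the survival function $y_{\nu+k}$, whereas the paper applies it directly to the indicator $\mathbf 1_{\{T>\nu+k\}}$; the two are equivalent after averaging over the auxiliary randomization.
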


\begin{proof}
Fix a change time $\nu\ge0$. Since $T$ is admissible,
$q_\nu(T)>0$. For every $k\ge0$, Assumption~\ref{ass:likelihood-floor}
gives
\begin{align}
L_{\nu,\nu+k}
=
\prod_{j=\nu+1}^{\nu+k}\Lambda_j
\ge
c^k
\qquad
P_\infty\text{-almost surely}
\; .
\label{eq:app-floor-product}
\end{align}
Using the change-of-measure identity,
\begin{align}
\mathbb P_\nu
\left(
T>\nu+k
\mid
T>\nu
\right)
&=
\frac{
\Einf
\left[
L_{\nu,\nu+k}
\mathbf 1_{\{T>\nu+k\}}
\right]
}{
q_\nu(T)
}
\\
&\ge
c^k
\frac{
\Pinf(T>\nu+k)
}{
q_\nu(T)
}
\\
&=
c^k
\frac{
q_{\nu+k}(T)
}{
q_\nu(T)
}
\; .
\label{eq:app-floor-tail-lower}
\end{align}
Conditional on $T>\nu$, the residual delay $T-\nu$ is a positive
integer-valued random variable. Hence its conditional mean can be
written through its tail probabilities as
\begin{align}
D_\nu(T)
&=
\sum_{k=0}^{\infty}
\mathbb P_\nu
\left(
T>\nu+k
\mid
T>\nu
\right) \ge
\frac{1}{q_\nu(T)}
\sum_{k=0}^{\infty}
c^k q_{\nu+k}(T)
\; .
\end{align}
This proves \eqref{eq:floor-delay-reduction}.
\end{proof}
The importance of Lemma~\ref{lem:floor-reduction} is that its right-hand
side depends on the stopping rule only through its no-change survival sequence. To expose the resulting structure, define the discounted future survival
\begin{align}
s_\nu
:=
\sum_{k=0}^{\infty}
c^k q_{\nu+k}(T)
\; .
\label{eq:discounted-survival}
\end{align}
Then Lemma~\ref{lem:floor-reduction} simply states that
$D_\nu(T)\ge s_\nu/q_\nu(T)$. Furthermore, the sequence
$\{s_\nu\}$ satisfies the recursion
\begin{align}
s_\nu
=
q_\nu+c s_{\nu+1} \; ,
\qquad
\nu\ge0
\; ,
\label{eq:discounted-survival-recursion}
\end{align}
or equivalently $q_\nu=s_\nu-cs_{\nu+1}$.
Thus, if all Pollak delays are bounded by some common value $u$, then
the ratios $s_\nu/q_\nu$ must also be bounded by $u$. The next
deterministic result converts this pointwise restriction into an upper
bound on the total ARL.

\begin{lemma}[Extremal survival-sequence inequality]
\label{lem:floor-extremal}
Let $c\in(0,1)$ and let $\{q_\nu:\nu\geq 0\}\subset(0,1]$ be a non-increasing sequence with $q_0=1$. Then define $s_\nu$ as in
\eqref{eq:discounted-survival}. Suppose $\frac{s_\nu}{q_\nu} \le u$ for $\nu\ge 0$
for some
$1\le u<1/(1-c)$. Then
\begin{align}
\sum_{\nu=0}^{\infty}q_\nu
\le
\frac{cu}
{1-(1-c)u}
\; .
\label{eq:extremal-arl-bound}
\end{align}
\end{lemma}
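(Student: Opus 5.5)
Let $S:=\sum_{\nu\ge0}q_\nu$. The plan is to sum the pointwise constraints $s_\nu\le u\,q_\nu$ over all $\nu$, use the recursion \eqref{eq:discounted-survival-recursion} to rewrite everything in terms of $S$, and solve for $S$. Two preliminary remarks: we may assume $S<\infty$, and all rearrangements below are sums of nonnegative terms, so they are justified by Tonelli.

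\emph{Case $S<\infty$.} By Tonelli,
\begin{align}
\sum_{\nu\ge0}s_\nu=\sum_{\nu\ge0}\sum_{k\ge0}c^kq_{\nu+k}=\sum_{n\ge0}q_n\sum_{k=0}^{n}c^k\; .
\end{align}
Summing the identity $q_\nu=s_\nu-c\,s_{\nu+1}$ over $\nu\ge0$ gives
\begin{align}
S=\sum_\nu s_\nu-c\Big(\sum_\nu s_\nu-s_0\Big)=(1-c)\Sigma+c\,s_0,\qquad \Sigma:=\sum_{\nu\ge0}s_\nu\; .
\end{align}
The hypothesis gives $\Sigma\le uS$, and at $\nu=0$ it gives $s_0\le u\,q_0=u$. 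Hence
\begin{align}
S\le(1-c)uS+cu\; .
\end{align}
Since $u<1/(1-c)$, the coefficient $1-(1-c)u$ is positive, and rearranging yields \eqref{eq:extremal-arl-bound}. One point needs care: the subtraction step requires $\Sigma<\infty$. This holds because $\Sigma\le S/(1-c)<\infty$, using $\sum_{k=0}^n c^k\le 1/(1-c)$ in the Tonelli identity.

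\emph{Case $S=\infty$.} This is the main obstacle, since the argument above divides by, or subtracts, infinite quantities. I will work with truncated sums and show that a divergent $S$ contradicts the hypothesis.

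First, the truncated version of the summation identity: for finite $N$, summing $q_\nu=s_\nu-c\,s_{\nu+1}$ over $0\le\nu<N$ gives
\begin{align}
S_N:=\sum_{\nu<N}q_\nu=(1-c)\sum_{\nu<N}s_\nu+c\,s_0-c\,s_N\le(1-c)u\,S_N+cu\; .
\end{align}
This is legitimate provided each $s_\nu$ is finite, which holds because $s_\nu\le u\,q_\nu\le u$. Rearranging shows that $S_N\le cu/(1-(1-c)u)$ for every $N$. This bound is uniform in $N$, so $S$ is finite after all, and letting $N\to\infty$ gives \eqref{eq:extremal-arl-bound} directly.

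This truncated route therefore handles both cases at once, and I would present it as the entire proof. The only check it needs is finiteness of $s_N$, which follows from the hypothesis as noted.
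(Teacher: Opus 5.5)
Your proof is correct, and it takes a different, more direct route than the paper.

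\textbf{The paper's route.} It first turns the pointwise constraint $s_\nu\le u\,q_\nu=u(s_\nu-c\,s_{\nu+1})$ into the one-step contraction $s_{\nu+1}\le\rho(u)\,s_\nu$, with $\rho(u)=(u-1)/(uc)\in[0,1)$. It bounds $s_0\le u$ from $1=s_0-c\,s_1\ge s_0\bigl(1-c\rho(u)\bigr)=s_0/u$. It then inserts the geometric majorant $s_\nu\le u\,\rho(u)^\nu$ into the telescoped identity $\sum_\nu q_\nu=s_0+(1-c)\sum_{\nu\ge1}s_\nu$. The geometric decay is also what justifies dropping the boundary term $c\,s_{m+1}$.

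\textbf{Your route.} You sum the constraints linearly over a finite window, discard $-c\,s_N\le0$, and solve a scalar inequality for $S_N$. You need no ratio $\rho(u)$, no limit argument for the boundary term, and no case split on $S$. The Tonelli discussion of the case $S<\infty$ can simply be deleted.

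\textbf{What each buys.} The paper's route yields the explicit pointwise tail bound $q_\nu\le s_\nu\le u\,\rho(u)^\nu$, and that structure is what it reuses in the equality analysis of Proposition~\ref{prop:floor-equality}. Your route serves that purpose at least as well. Keep the slacks $d_\nu:=u\,q_\nu-s_\nu\ge0$ instead of discarding them. The same computation then gives, after letting $N\to\infty$ (using $s_N\le u\,q_N\to0$), the exact identity
\begin{align*}
\bigl(1-(1-c)u\bigr)\sum_{\nu\ge0}q_\nu
=
cu-(1-c)\sum_{\nu\ge0}d_\nu-c\,d_0
\; .
\end{align*}
Hence equality in \eqref{eq:extremal-arl-bound} holds if and only if $s_\nu=u\,q_\nu$ for every $\nu$. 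Through the recursion $q_\nu=s_\nu-c\,s_{\nu+1}$, this forces $q_{\nu+1}=\rho(u)\,q_\nu$. That is exactly the geometric survival structure the paper extracts in Proposition~\ref{prop:floor-equality}, reached here more directly.
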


\begin{proof}
Recall that
\begin{align}
s_\nu
=
\sum_{k=0}^{\infty}
c^k q_{\nu+k}
\; .
\end{align}
The definition immediately gives the recursion
\begin{align}
s_\nu
=
q_\nu+c s_{\nu+1} \; ,
\qquad
\nu\ge0
\; ,
\label{eq:app-s-recursion}
\end{align}
and therefore
\begin{align}
q_\nu
=
s_\nu-cs_{\nu+1}
\; .
\label{eq:app-q-from-s}
\end{align}
By assumption, $s_\nu/q_\nu\le u$ for every $\nu$. Since
$s_\nu\ge q_\nu$, necessarily $u\ge1$. Combining the assumed bound
with \eqref{eq:app-q-from-s} gives
\begin{align}
s_\nu
&\le
u q_\nu =
u
\left(
s_\nu-cs_{\nu+1}
\right)
\; ,
\end{align}
and hence
\begin{align}
s_{\nu+1}
\le
\rho(u)s_\nu \; ,
\qquad
\rho(u)
:=
\frac{u-1}{uc}
\; .
\label{eq:app-rho}
\end{align}
Since $u<1/(1-c)$, we have $0\le\rho(u)<1$.
Next, $q_0=1$ and \eqref{eq:app-q-from-s} give
\begin{align}
1
&=
s_0-cs_1 \ge
s_0
\left(
1-c\rho(u)
\right) =
\frac{s_0}{u}
\; ,
\end{align}
where
\begin{align}
1-c\rho(u)
=
\frac{1}{u}
\; .
\end{align}
Therefore,
\begin{align}
s_0
\le
u
\; .
\label{eq:app-s0-bound}
\end{align}
Iterating \eqref{eq:app-rho} gives
\begin{align}
s_\nu
\le
s_0\rho(u)^\nu \; ,
\qquad
\nu\ge0
\; .
\label{eq:app-s-geometric}
\end{align}
For a finite integer $m$, summing
\eqref{eq:app-q-from-s} from $\nu=0$ to $m$ yields
\begin{align}
\sum_{\nu=0}^{m}q_\nu
=
s_0
+
(1-c)
\sum_{\nu=1}^{m}s_\nu
-
c s_{m+1}
\; .
\label{eq:app-telescoping-s}
\end{align}
By \eqref{eq:app-s-geometric}, $s_{m+1}\to0$. Hence
\begin{align}
\sum_{\nu=0}^{\infty}q_\nu
&=
s_0
+
(1-c)
\sum_{\nu=1}^{\infty}s_\nu
\\
&\le
s_0
\left[
1+
(1-c)
\frac{\rho(u)}{1-\rho(u)}
\right]
\\
&\le
u
\left[
1+
(1-c)
\frac{\rho(u)}{1-\rho(u)}
\right]
\; .
\label{eq:app-arl-before-simplify}
\end{align}
Using the definition of $\rho(u)$,
\begin{align}
1+
(1-c)
\frac{\rho(u)}{1-\rho(u)}
=
\frac{c}
{1-(1-c)u}
\; .
\end{align}
Substituting this into \eqref{eq:app-arl-before-simplify} gives
\begin{align}
\sum_{\nu=0}^{\infty}q_\nu
\le
\frac{cu}
{1-(1-c)u}
\; ,
\end{align}
which proves \eqref{eq:extremal-arl-bound}.
\end{proof}

\subsection{Completion of the Proof of Theorem~\ref{thm:floor-bound}}

Consider first $c\in(0,1)$, and let $T$ be any admissible stopping
rule satisfying $\Einf[T]\ge\gamma$. If $J_{\sf P}(T)=\infty$, the result
is immediate. Hence suppose
\begin{align}
R
:=
J_{\sf P}(T)
<
\infty
\; .
\end{align}
For the discounted survival sequence
\begin{align}
s_\nu
=
\sum_{k=0}^{\infty}
c^kq_{\nu+k}(T) \; ,
\end{align}
Lemma~\ref{lem:floor-reduction} gives
\begin{align}
\frac{s_\nu}{q_\nu(T)}
\le
D_\nu(T)
\le
R \; ,
\qquad
\nu\ge0
\; .
\label{eq:app-floor-common-bound}
\end{align}
It also implies that $R\geq 1$ since $s_\nu \geq q_\nu(T)>0$.
If $R\ge1/(1-c)$, then the desired bound already holds, because
\begin{align}
\frac{\gamma}
{c+(1-c)\gamma}
<
\frac{1}{1-c}
\; .
\end{align}

It therefore remains to consider
\begin{align}
R
<
\frac{1}{1-c}
\; .
\end{align}
Applying Lemma~\ref{lem:floor-extremal} with $u=R$ gives
\begin{align}
\Einf[T]
=
\sum_{\nu=0}^{\infty}q_\nu(T)
\le
\frac{cR}
{1-(1-c)R}
\; .
\end{align}
Since $\Einf[T]\ge\gamma$,
\begin{align}
\gamma
\le
\frac{cR}
{1-(1-c)R}
\; .
\end{align}
The denominator is positive, so rearranging gives
\begin{align}
R
\ge
\frac{\gamma}
{c+(1-c)\gamma}
\; .
\end{align}
Since $R=J_{\sf P}(T)$, this proves \eqref{eq:floor-lower-bound}.
Finally, when $c=0$, the right-hand side of~\eqref{eq:floor-lower-bound} equals one. Conditional on survival to
any change time, the residual stopping delay is a positive
integer-valued random variable, and therefore $D_\nu(T)\ge1$.
Hence $J_{\sf P}(T)\ge1$, proving the endpoint case as well.
Taking the infimum over all admissible rules satisfying
$\Einf[T]\ge\gamma$ gives \eqref{eq:floor-value-lower-bound}.

\section{Equality Conditions and Proof of Theorem~\ref{thm:floor-sharpness}}

\label{app:floor-sharpness}

\subsection{Equality Conditions and Sharpness}

We start by providing some relevant preliminaries and the central part of the proof in Section~\ref{app:floor-sharpness}. The lower bound becomes especially informative when we ask what would
be required for equality. For $c\in(0,1)$, the inequalities leading
to Theorem~\ref{thm:floor-bound} are sufficiently rigid that equality
essentially determines the pre-change survival behavior of an optimal rule. For a fixed false-alarm level $\gamma>1$, define $\rho
:= 1-\frac{1}{\gamma}$

\begin{proposition}[Equality structure]
\label{prop:floor-equality}
Suppose $c\in(0,1)$, Assumption~\ref{ass:likelihood-floor} holds,
and an admissible stopping rule $T$ satisfies
\begin{align}
\Einf[T]
=
\gamma
\qquad \mbox{and} \qquad
J_{\sf P}(T)
=
R_c(\gamma)
\; .
\label{eq:floor-equality-assumption}
\end{align}
Then its no-change survival probabilities necessarily satisfy
\begin{align}
q_n(T)
=
\rho^n \; ,
\qquad
n\ge0
\; .
\label{eq:geometric-survival-equality}
\end{align}
Consequently, its aggregate pre-change stopping hazard is constant:
\begin{align}
\Pinf
\left(
T=n+1
\mid
T>n
\right)
=
\frac{1}{\gamma} \; ,
\qquad
n\ge0
\; .
\label{eq:aggregate-geometric-hazard}
\end{align}
Furthermore, let $\mathbf y=\{y_n:n\ge0\}$ denote the survival process
induced by $T$. Then equality requires
\begin{align}
\bigl(
\Lambda(X_n)-c
\bigr)
y_n(X_{1:n})
=
0
\qquad
P_\infty^n\text{-almost surely} \; ,
\qquad
n\ge1
\; .
\label{eq:floor-only-survival}
\end{align}
Thus, whenever the detector has positive probability of continuing
beyond time $n$, the current observation must lie on the
likelihood-ratio floor.
\end{proposition}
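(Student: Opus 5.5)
The plan is to run the chain of inequalities behind Theorem~\ref{thm:floor-bound} again and show that, under \eqref{eq:floor-equality-assumption}, every inequality in it must hold with equality. Write $R:=R_c(\gamma)=\gamma/(c+(1-c)\gamma)$.

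First, a direct computation gives $1-(1-c)R=c/(c+(1-c)\gamma)>0$, so $1\le R<1/(1-c)$. Hence Lemma~\ref{lem:floor-extremal} applies with $u=R$ to the discounted survival sequence $s_\nu$ of \eqref{eq:discounted-survival}. Lemma~\ref{lem:floor-reduction} gives $s_\nu/q_\nu\le D_\nu(T)\le R$. We then obtain
\begin{align}
\gamma=\Einf[T]=\sum_{\nu\ge0}q_\nu\le\frac{cR}{1-(1-c)R}=\gamma ,
\end{align}
where the last identity is the same algebra as at the end of the proof of Theorem~\ref{thm:floor-bound}. So the extremal bound is attained.

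Second, I trace where equality must hold inside the proof of Lemma~\ref{lem:floor-extremal}. The final bound is
\begin{align}
s_0+(1-c)\sum_{\nu\ge1}s_\nu\le s_0\bigl[1+(1-c)\rho(u)/(1-\rho(u))\bigr]\le u\bigl[\cdots\bigr].
\end{align}
Equality there, together with $1-c>0$, forces $s_0=u=R$. It also forces $s_\nu=s_0\rho(R)^\nu$ for every $\nu$: each $s_\nu$ is bounded above by $s_0\rho(R)^\nu$ and enters the sum with a strictly positive weight, so none can fall short. Computing $R-1=c(\gamma-1)/(c+(1-c)\gamma)$ gives $\rho(R)=(R-1)/(Rc)=1-1/\gamma=\rho$. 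Then \eqref{eq:app-q-from-s} yields
\begin{align}
q_\nu=s_\nu-cs_{\nu+1}=R\rho^\nu(1-c\rho)=\rho^\nu ,
\end{align}
since $1-c\rho=1/R$. This is \eqref{eq:geometric-survival-equality}. The constant hazard \eqref{eq:aggregate-geometric-hazard} follows from $q_{n+1}/q_n=\rho$.

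Third, the floor-only survival property \eqref{eq:floor-only-survival}. From the previous step, $s_\nu=Rq_\nu$. Combined with $s_\nu/q_\nu\le D_\nu(T)\le R$, this forces $D_\nu(T)=s_\nu/q_\nu$ for all $\nu$. So the termwise inequality in the proof of Lemma~\ref{lem:floor-reduction} must be an equality for every $\nu$ and $k$, i.e.
\begin{align}
\Einf\bigl[(L_{\nu,\nu+k}-c^k)\,y_{\nu+k}(X_{1:\nu+k})\bigr]=0 .
\end{align}
All those terms are nonnegative, because $L_{\nu,\nu+k}\ge c^k$ by Assumption~\ref{ass:likelihood-floor} and $y\ge0$. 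Taking $\nu=n-1$ and $k=1$ gives $\Einf[(\Lambda(X_n)-c)y_n]=0$ with a nonnegative integrand. Hence $(\Lambda(X_n)-c)y_n=0$ holds $P_\infty^n$-a.s.; here $\mathbb P_\infty(T>n\mid X_{1:n})=y_n$ by Lemma~\ref{lemma:survival-completeness}.

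The main obstacle is the second step. I need to make sure that equality in the aggregate ARL bound really propagates to every individual $s_\nu$, rather than only to a weighted sum. The plan is to argue termwise as above: each term satisfies $s_\nu\le s_0\rho^\nu$ and has a strictly positive coefficient $(1-c)$, and $s_0\le R$. Therefore any strict inequality at a single index would make the total strictly smaller than $\gamma$.
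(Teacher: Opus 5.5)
Your proposal is correct and follows the paper's proof essentially step for step: you check $1\le R_c(\gamma)<1/(1-c)$, force equality in the ARL chain from the proof of Lemma~\ref{lem:floor-extremal} to get $s_0=R$, $s_\nu=R\rho^\nu$ and hence $q_\nu=\rho^\nu$, and then use $s_\nu=Rq_\nu$ to force termwise equality in Lemma~\ref{lem:floor-reduction} at $k=1$. Writing the vanishing term directly as $\Einf[(\Lambda(X_n)-c)\,y_n(X_{1:n})]=0$ is a slightly cleaner way to reach the survival-process form than the paper's final averaging of the indicator over the auxiliary randomization.
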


\begin{proof}
Define
\begin{align}
R_\star
:=
R_c(\gamma)
=
\frac{\gamma}
{c+(1-c)\gamma}
\; .
\label{eq:app-Rstar}
\end{align}
By assumption,
\begin{align}
\Einf[T]
=
\gamma
\qquad \mbox{and} \qquad
J_{\sf P}(T)
=
R_\star
\; .
\end{align}
For the discounted survival sequence
\begin{align}
s_\nu
=
\sum_{k=0}^{\infty}
c^kq_{\nu+k}(T) \; ,
\end{align}
Lemma~\ref{lem:floor-reduction} gives, for every $\nu$,
\begin{align}
\frac{s_\nu}{q_\nu(T)}
\le
D_\nu(T)
\le
R_\star
\; .
\label{eq:app-equality-common-bound}
\end{align}
Since $T$ is admissible, the sequence
$\{q_\nu(T):\nu\ge0\}$ is non-increasing, takes values in $(0,1]$,
and satisfies $q_0(T)=1$. It remains to verify that $R_\star$ lies
in the range required by Lemma~\ref{lem:floor-extremal}. Since
$c\in(0,1)$ and $\gamma>1$ we have
\begin{align}
R_\star-1 = \frac{c(\gamma-1)}
{c+(1-c)\gamma} > 0 \; , \qquad \mbox{and}
 \qquad 
\frac{1}{1-c}-R_\star
= \frac{c} {(1-c)\bigl(c+(1-c)\gamma\bigr)} > 0
\; ,
\end{align}
indicating
\begin{align}
1 < R_\star < \frac{1}{1-c}
\; .
\end{align}
Together with the preceding bound
$s_\nu/q_\nu(T)\le R_\star$, all the assumptions of
Lemma~\ref{lem:floor-extremal} are satisfied with $u=R_\star$.
Define
\begin{align}
\rho_\star
:=
\frac{R_\star-1}
{R_\star c}
\; .
\label{eq:app-rho-star-def}
\end{align}
Direct substitution of \eqref{eq:app-Rstar} gives
\begin{align}
\rho_\star
=
\frac{\gamma-1}{\gamma}
=
1-\frac1\gamma
=
\rho
\; .
\label{eq:app-rho-star}
\end{align}
Furthermore,
\begin{align}
R_\star
\left(
1-c\rho
\right)
=
1
\; .
\label{eq:app-Rstar-identity}
\end{align}
The proof of Lemma~\ref{lem:floor-extremal} yields
\begin{align}
s_0
&\le
R_\star \; ,
\label{eq:app-equality-s0}\\
s_{\nu+1}
&\le
\rho s_\nu \; ,
\qquad
\nu\ge0 \; ,
\label{eq:app-equality-contraction}
\end{align}
and therefore
\begin{align}
\Einf[T]
&=
s_0+(1-c)\sum_{\nu=1}^{\infty}s_\nu
\le
s_0\left[1+(1-c)\frac{\rho}{1-\rho}\right]
\; ,
\\
&\le
R_\star\left[1+(1-c)\frac{\rho}{1-\rho}\right]
=
\gamma
\; .
\label{eq:app-equality-chain}
\end{align}
The left-hand side is equal to $\gamma$ by assumption. Hence equality
must hold throughout \eqref{eq:app-equality-chain}.
In particular,
\begin{align}
s_0
=
R_\star
\; .
\label{eq:app-s0-equality}
\end{align}
Furthermore, since all coefficients in
\begin{align}
s_0+(1-c)\sum_{\nu=1}^{\infty}s_\nu
\end{align}
are strictly positive and
$s_\nu\le s_0\rho^\nu$ term by term, equality of the total sums
forces
\begin{align}
s_\nu
=
R_\star\rho^\nu \; ,
\qquad
\nu\ge0
\; .
\label{eq:app-s-equality}
\end{align}
Using $q_\nu=s_\nu-cs_{\nu+1}$ together with
\eqref{eq:app-Rstar-identity},
\begin{align}
q_\nu(T)
&=
R_\star\rho^\nu
-
cR_\star\rho^{\nu+1} =
R_\star
\left(
1-c\rho
\right)
\rho^\nu =
\rho^\nu
\; .
\label{eq:app-geometric-q}
\end{align}
This proves \eqref{eq:geometric-survival-equality}. Consequently,
\begin{align}
\Pinf
\left(
T=n+1
\mid
T>n
\right)
&=
1-
\frac{q_{n+1}(T)}{q_n(T)} =
1-\rho
=
\frac1\gamma
\; ,
\end{align}
which gives \eqref{eq:aggregate-geometric-hazard}.
It remains to establish the floor-only continuation property. From
\eqref{eq:app-geometric-q},
\begin{align}
\frac{1}{q_\nu(T)}
\sum_{k=0}^{\infty}
c^kq_{\nu+k}(T)
&=
\sum_{k=0}^{\infty}
(c\rho)^k =
\frac{1}{1-c\rho}
=
R_\star
\; .
\label{eq:app-floor-reduction-equality-value}
\end{align}
Hence, for every $\nu$,
\begin{align}
R_\star
&=
\frac{1}{q_\nu(T)}
\sum_{k=0}^{\infty}
c^kq_{\nu+k}(T) \le
D_\nu(T)
\le
J_{\sf P}(T)
=
R_\star
\; .
\end{align}
Thus every inequality is an equality.
In the proof of Lemma~\ref{lem:floor-reduction}, the difference between
the $k$-th true conditional survival probability and its lower bound
is nonnegative. Since the sum of all these differences is zero, each
difference must vanish. In particular, for $k=1$,
\begin{align}
\Einf
\left[
\Lambda_{\nu+1}
\mathbf 1_{\{T>\nu+1\}}
\right]
=
c\,q_{\nu+1}(T)
\; .
\end{align}
Equivalently,
\begin{align}
\Einf
\left[
\left(
\Lambda_{\nu+1}-c
\right)
\mathbf 1_{\{T>\nu+1\}}
\right]
=
0
\; .
\label{eq:app-floor-only-event}
\end{align}
The integrand is nonnegative. Therefore,
\begin{align}
\left(
\Lambda_{\nu+1}-c
\right)
\mathbf 1_{\{T>\nu+1\}}
=
0
\qquad
\Pinf\text{-almost surely}
\; .
\end{align}
Averaging the indicator of survival over the detector's auxiliary
randomization gives the survival function $y_{\nu+1}$. Hence
\eqref{eq:app-floor-only-event} is equivalently
\begin{align}
\left(
\Lambda(X_n)-c
\right)
y_n(X_{1:n})
=
0
\qquad
P_\infty^n\text{-almost surely} \; ,
\qquad
n\ge1
\; ,
\end{align}
which proves \eqref{eq:floor-only-survival}.
\end{proof}
Proposition~\ref{prop:floor-equality} identifies two distinct
requirements for exact optimality. First, the \emph{amount} of
pre-change survival is fixed: the detector must retain a fraction
$\rho$ of its surviving probability mass at every time. Second, the \emph{location} of this survival mass is fixed: continuation can occur only on observations for which the likelihood ratio is as small as
possible. In this sense, equality requires the detector to spend its false-alarm budget exclusively on the observations that are least indicative of a change. The restriction $c>0$ in Proposition~\ref{prop:floor-equality} is
important. At the endpoint $c=0$, the lower bound remains valid and
can be sharp, but equality does not force the geometric survival
structure in \eqref{eq:geometric-survival-equality}.

We next identify a simple condition under which the equality requirements can actually be implemented. Define the floor set
\begin{align}
B
:=
\left\{
x\in\mathsf X:
\Lambda(x)=c
\right\} \; ,
\label{eq:floor-set}
\end{align}
and let $\beta := P_\infty(B)$. Since the likelihood ratio equals $c$ on $B$, we have $P_0(B) = c\beta$. The desired pre-change continuation probability is $\rho$. If
$\beta\ge\rho$, the floor set contains enough pre-change probability
mass to support all of this continuation while obeying the floor-only
condition in \eqref{eq:floor-only-survival}.

\subsection{Proof of Theorem~\ref{thm:floor-sharpness}}
\label{app:floor-sharpness2}

Let
\begin{align}
\rho
=
1-\frac1\gamma
\; .
\end{align}
Since $\rho\le\beta$, the continuation probability
$\rho/\beta$ in \eqref{eq:floor-optimal-rule} belongs to
$[0,1]$, so the proposed randomized rule is well defined.
Under the no-change law, at each time the rule continues precisely when
$X_n\in B$ and $U_n\le\rho/\beta$. Therefore its one-step
continuation probability is
\begin{align}
P_\infty(B)
\frac{\rho}{\beta}
=
\beta\frac{\rho}{\beta}
=
\rho
\; .
\label{eq:app-floor-pre-cont}
\end{align}
Because the observations and auxiliary randomizations are independent
across time, the no-change survival probabilities are
\begin{align}
q_n(T_c^\star)
=
\rho^n \; ,
\qquad
n\ge0
\; .
\end{align}
Consequently,
\begin{align}
\Einf[T_c^\star]
&=
\sum_{n=0}^{\infty}\rho^n
=
\frac{1}{1-\rho}
=
\gamma
\; .
\label{eq:app-floor-sharp-arl}
\end{align}
Next, since $\Lambda=c$ on $B$,
\begin{align}
P_0(B)
&=
\Einf
\left[
\Lambda(X)\mathbf 1_{\{X\in B\}}
\right] =
cP_\infty(B)
=
c\beta
\; .
\label{eq:app-floor-B-post}
\end{align}
Hence, after the change, the one-step continuation probability is
\begin{align}
P_0(B)
\frac{\rho}{\beta}
=
c\rho
\; .
\label{eq:app-floor-post-cont}
\end{align}
Conditional on survival to any change time $\nu$, all future
observations have law $P_0$, all future auxiliary coins are fresh and
independent, and the stopping mechanism is the same at every time. Therefore, the residual delay is geometric on
$\{1,2,\ldots\}$ with stopping probability $1-c\rho$. Thus
\begin{align}
D_\nu(T_c^\star)
&=
\frac{1}{1-c\rho} =
\frac{\gamma}
{c+(1-c)\gamma} \; ,
\qquad
\nu\ge0
\; .
\label{eq:app-floor-sharp-delay}
\end{align}
In particular, $T_c^\star$ is an equalizer and
\begin{align}
J_{\sf P}(T_c^\star)
=
\frac{\gamma}
{c+(1-c)\gamma}
\; .
\end{align}
Theorem~\ref{thm:floor-bound} gives the matching lower bound over all
admissible stopping rules. Hence
\begin{align}
V_{\sf P}(\gamma)
=
\frac{\gamma}
{c+(1-c)\gamma}
\; ,
\end{align}
which proves \eqref{eq:floor-rule-equalizer}.

\subsection{Interpretation of Theorem~\ref{thm:floor-sharpness}}
The construction has a simple interpretation. Under $P_\infty$, the
one-step continuation probability is
$\beta(\rho/\beta)=\rho$, producing the required geometric
pre-change survival and ARL $\gamma$. Under $P_0$, the same
continuation probability becomes
$c\beta(\rho/\beta)=c\rho$. Thus, after a change, the residual
stopping time is geometric with mean $1/(1-c\rho)$, independently
of the change time. The resulting procedure is therefore an exact equalizer.
Randomization in \eqref{eq:floor-optimal-rule} is structural rather
than a boundary correction. If $\rho<\beta$, continuing on every
observation in $B$ would retain too much pre-change survival mass, so
the detector must randomize on the floor set to achieve the prescribed ARL. At the endpoint $\rho=\beta$, the rule becomes non-randomized and continues exactly when $X_n\in B$.

\section{Proof of Theorem~\ref{thm:bernoulli-exact}}
\label{app:bernoulli-exact}

The Bernoulli model provides the simplest nontrivial setting in which
the floor set and its probability are explicit. Consider
\begin{align}
P_\infty(X_n=1)
=
p
\qquad \mbox{and} \qquad
P_0(X_n=1)
=
q
\qquad \mbox{with} \qquad
0<p<q<1
\; .
\label{eq:bernoulli-model}
\end{align}
The likelihood ratio has only two values:
\begin{align}
\Lambda(0)
=
\frac{1-q}{1-p}
=:c<1
\qquad \mbox{and} \qquad
\Lambda(1)
=
\frac{q}{p}
>1
\; .
\label{eq:bernoulli-likelihoods}
\end{align}
Hence the likelihood-ratio floor is achieved on $B=\{0\}$, with $\beta = P_\infty(B) = 1-p $. The sharpness condition $\rho\le\beta$ becomes $1 < \gamma \le \frac 1 p$. Thus, throughout this range, the general floor construction gives an explicit exact optimizer.
To describe the rule, define
\begin{align}
a_\gamma
:=
\frac{1-1/\gamma}{1-p}
=
\frac{\gamma-1}{\gamma(1-p)}
\; ,
\label{eq:bernoulli-continuation-probability}
\end{align}
which belongs to $[0,1]$. Equivalently, define the stopping
probability on a zero by
\begin{align}
\eta_\gamma
:=
1-a_\gamma
=
\frac{1-\gamma p}
{\gamma(1-p)}
\; .
\label{eq:bernoulli-randomization}
\end{align}
The resulting rule is particularly simple:
\begin{align}
X_n=1
&\quad\Longrightarrow\quad
\text{stop} \; ,
\\
X_n=0
&\quad\Longrightarrow\quad
\begin{cases}
\text{stop}, & \text{with probability }\eta_\gamma,\\
\text{continue}, & \text{with probability }1-\eta_\gamma.
\end{cases}
\label{eq:bernoulli-rule}
\end{align}
For completeness, under $P_\infty$ the one-step continuation
probability is
\begin{align}
(1-p)a_\gamma
=
\rho
=
1-\frac1\gamma
\; .
\end{align}
Thus
\begin{align}
\Einf[T_\gamma^\star]
=
\frac{1}{1-\rho}
=
\gamma
\; .
\end{align}
Under $P_0$, the one-step continuation probability is
\begin{align}
(1-q)a_\gamma
&=
\frac{1-q}{1-p}
\left(
1-\frac1\gamma
\right) =
c\rho
\; .
\end{align}
Therefore, conditional on survival to any change time, the residual
delay is geometric with mean
\begin{align}
D_\nu(T_\gamma^\star)
&=
\frac{1}{1-c\rho} =
\frac{1}{
1-
\frac{1-q}{1-p}
\left(
1-\frac1\gamma
\right)
} =
\frac{\gamma(1-p)}
{1-q+\gamma(q-p)}
\; ,
\qquad
\nu\ge0
\; .
\label{eq:app-bernoulli-delay}
\end{align}
Hence the rule is an exact equalizer. Finally, Theorem~\ref{thm:floor-bound} gives the all-rules lower bound
\begin{align}
V_{\sf P}(\gamma)
&\ge
\frac{\gamma}
{
\frac{1-q}{1-p}
+
\left(
1-\frac{1-q}{1-p}
\right)\gamma
} =
\frac{\gamma(1-p)}
{1-q+\gamma(q-p)}
\; .
\end{align}
The constructed rule achieves this bound by
\eqref{eq:app-bernoulli-delay}. Therefore,
\begin{align}
V_{\sf P}(\gamma)
=
\frac{\gamma(1-p)}
{1-q+\gamma(q-p)}
\; ,
\end{align}
which proves the theorem.

\section{Proof of  the Canonical-GSR Separation in (96)}
\label{app:canonical-gsr-separation}

Consider the Bernoulli model \begin{align}
P_\infty(X_n=1)
&=
\frac14
\; ,
&
P_0(X_n=1)
&=
\frac12
\; .
\label{eq:app-gsr-model}
\end{align}
For the canonical GSR class, we use the conventional nonnegative head-start
condition
\begin{align}
Q([0,A))
=
1
\; ,
\label{eq:app-gsr-headstart}
\end{align}
so that $R_0\in[0,A)$ almost surely and is independent of the observations.
The one-observation likelihood ratio takes the two values
\begin{align}
\Lambda(0)
&=
\frac{1-P_0(X_n=1)}
     {1-P_\infty(X_n=1)}
=
\frac23
\; ,
&
\Lambda(1)
&=
\frac{P_0(X_n=1)}
     {P_\infty(X_n=1)}
=
2
\; .
\label{eq:app-gsr-lr-values}
\end{align}
Hence the GSR recursion can be written through the two update maps
\begin{align}
F_0(x)
&:=
\frac23(1+x)
\; ,
&
F_1(x)
&:=
2(1+x)
\; .
\label{eq:app-gsr-update-maps}
\end{align}
The zero-update map has the unique fixed point
\begin{align}
a
=
\frac{2/3}{1-2/3}
=
2
\; .
\label{eq:app-gsr-fixed-point}
\end{align}
We divide the argument according to whether the constant threshold $A$ lies
below or at-or-above this fixed point.

\subsection{Thresholds below the fixed point}
Suppose first that
\begin{align}
A
<
2
\; .
\label{eq:app-gsr-A-below}
\end{align}
For every $x\in[0,2)$, repeated zero updates satisfy
\begin{align}
F_0^{\circ n}(x)
=
2+
\left(\frac23\right)^n(x-2)
\; ,
\qquad
n\ge0
\; .
\label{eq:app-gsr-zero-iterate}
\end{align}
In particular,
\begin{align}
F_0^{\circ n}(x)
\ge
F_0^{\circ n}(0)
=
2
\left[
1-
\left(\frac23\right)^n
\right]
\; ,
\qquad
x\in[0,A)
\; .
\label{eq:app-gsr-zero-lower}
\end{align}
Since the right-hand side increases to $2>A$, there exists a finite
integer $N_A$ such that
\begin{align}
2
\left[
1-
\left(\frac23\right)^{N_A}
\right]
\ge
A
\; .
\label{eq:app-gsr-NA}
\end{align}
Thus, starting from every admissible head start $x\in[0,A)$, the all-zero
trajectory crosses the threshold by time $N_A$.

Furthermore,
\begin{align}
F_1(x)-F_0(x)
=
\frac43(1+x)
>
0
\; ,
\qquad
x\ge0
\; ,
\label{eq:app-gsr-one-dominates}
\end{align}
and both $F_0$ and $F_1$ are increasing. Hence replacing any zero in an
observation path by a one can only increase the subsequent GSR statistic
and therefore cannot delay threshold crossing. Consequently,
\begin{align}
T_A^Q
\le
N_A
\qquad
\text{almost surely}
\; .
\label{eq:app-gsr-bounded-support}
\end{align}
It follows that
\begin{align}
P_\infty(T_A^Q>N_A)
=
0
\; .
\label{eq:app-gsr-zero-survival}
\end{align}
Under the Pollak convention adopted in Definition~2.1, a rule whose
survival probability vanishes at a finite change time has infinite Pollak
risk. Therefore,
\begin{align}
A<2
\qquad\Longrightarrow\qquad
J_{\sf P}(T_A^Q)
=
\infty
\; .
\label{eq:app-gsr-below-infinite}
\end{align}
\subsection{Thresholds at or above the fixed point}

Suppose now that
\begin{align}
A
\ge
2
\; .
\label{eq:app-gsr-A-above}
\end{align}
Whenever the procedure has not yet stopped, $R_{n-1}<A$. Therefore, if
$X_n=0$,
\begin{align}
R_n
=
F_0(R_{n-1})
<
\frac23(1+A)
\le
A
\; ,
\label{eq:app-gsr-zero-no-alarm}
\end{align}
where the final inequality is equivalent to $A\ge2$. Thus an observation
equal to zero can never trigger an alarm.

Under an immediate change, define the first post-change observation equal
to one by
\begin{align}
\tau_1
:=
\inf\{n\ge1:X_n=1\}
\; .
\label{eq:app-gsr-first-one}
\end{align}
Since a zero cannot trigger an alarm,
\begin{align}
T_A^Q
\ge
\tau_1
\qquad
P_0\text{-almost surely}
\; .
\label{eq:app-gsr-lower-first-one}
\end{align}
Under $P_0$, $\tau_1$ is geometric on $\{1,2,\ldots\}$ with success
probability $1/2$. Hence
\begin{align}
\mathbb E_0[\tau_1]
=
2
\; .
\label{eq:app-gsr-first-one-mean}
\end{align}
Since $T_A^Q\ge1$, the fixed-rate constraint corresponding to an immediate change satisfies
\begin{align}
D_0(T_A^Q)
=
\mathbb E_0[T_A^Q]
\ge
\mathbb E_0[\tau_1]
=
2
\; .
\label{eq:app-gsr-D0-lower}
\end{align}
Consequently,
\begin{align}
A\ge2
\qquad\Longrightarrow\qquad
J_{\sf P}(T_A^Q)
\ge
2
\; .
\label{eq:app-gsr-above-lower}
\end{align}
Together with \eqref{eq:app-gsr-below-infinite}, this proves that every
canonical GSR rule with finite Pollak risk satisfies
\begin{align}
J_{\sf P}(T_A^Q)
\ge
2
\; .
\label{eq:app-gsr-universal-lower}
\end{align}
\subsection{Achieving the lower bound}

It remains to show that the value $2$ is achieved by a canonical GSR rule
whose ARL is at least $\gamma$ for every $1<\gamma\le4$. Take
\begin{align}
A
=
2
\end{align}
and, for example, the conventional head start
\begin{align}
Q
=
\delta_0
\; .
\end{align}
More generally, the following argument works for every
$Q$ supported on $[0,2)$. If $x<2$, then
\begin{align}
F_0(x)
&=
\frac23(1+x)
<
2
\; ,
\\
F_1(x)
&=
2(1+x)
\ge
2
\; .
\label{eq:app-gsr-A2-maps}
\end{align}
Therefore, the threshold-$2$ GSR rule stops exactly at the first
observation equal to one:
\begin{align}
T_2^Q
=
\inf\{n\ge1:X_n=1\}
\; .
\label{eq:app-gsr-first-one-rule}
\end{align}
Under the no-change law, the success probability is $1/4$, and hence
\begin{align}
\Einf[T_2^Q]
=
4
\; .
\label{eq:app-gsr-first-one-arl}
\end{align}
Under a change after time $\nu$, conditional on $T_2^Q>\nu$, all
observations through time $\nu$ are zero and the future observations are
independent Bernoulli$(1/2)$. Thus the residual delay is geometric on
$\{1,2,\ldots\}$ with success probability $1/2$, so
\begin{align}
D_\nu(T_2^Q)
=
2
\; ,
\qquad
\nu\ge0
\; .
\label{eq:app-gsr-first-one-equalizer}
\end{align}
Therefore,
\begin{align}
J_{\sf P}(T_2^Q)
=
2
\; .
\label{eq:app-gsr-first-one-risk}
\end{align}
Since
\begin{align}
\Einf[T_2^Q]
=
4
\ge
\gamma
\; ,
\qquad
1<\gamma\le4
\; ,
\end{align}
the rule is feasible for every false-alarm level in the range considered.
Combining the lower bound in \eqref{eq:app-gsr-universal-lower} with
\eqref{eq:app-gsr-first-one-risk} yields
\begin{align}
\inf_{\substack{
T\in\mathcal G_{\rm can}\\
\Einf[T]\ge\gamma
}}
J_{\sf P}(T)
=
2
\; ,
\qquad
1<\gamma\le4
\; ,
\label{eq:app-gsr-canonical-value}
\end{align}
which proves \eqref{eq:num-gsr-value}. Finally, Theorem~\ref{thm:bernoulli-exact} gives the unrestricted Pollak value
\begin{align}
V_{\sf P}(\gamma)
=
\frac{3\gamma}{\gamma+2}
\; ,
\qquad
1<\gamma\le4
\; .
\label{eq:app-gsr-unrestricted-value}
\end{align}
Hence, for $1<\gamma<4$, the exact separation is
\begin{align}
2-V_{\sf P}(\gamma)
&=
2-
\frac{3\gamma}{\gamma+2} =
\frac{4-\gamma}{\gamma+2}
>
0
\; .
\label{eq:app-gsr-exact-gap}
\end{align}
At $\gamma=4$, the gap closes.

\section{Primal–Dual Guarantee of the PVF Algorithm}
\label{app:certified-pvfa}

The truncated primal and dual formulations can be used from opposite
sides without solving either optimization problem exactly. A
primal-feasible survival process gives a lower bound on $\Gamma_N(r)$,
whereas any nonnegative dual vector gives an upper bound. Combined with
the truncation bound in Theorem~\ref{thm:finite-frontier-bounds}, this
yields the following guarantees.

\subsection{Primal--Dual Guarantees}

Recall that $\operatorname{ARL}_N$, $A_\nu^{[N]}$, $\varepsilon_N(r)$, and $\mathfrak D_N$ are defined in \eqref{eq:finite-arl}, \eqref{eq:finite-delay-numerator}, \eqref{eq:finite-error}, and \eqref{eq:dual-objective}, respectively.
\begin{proposition}[Truncated primal--dual guarantees]
\label{prop:finite-certificates}
Fix $\gamma>1$, $N\ge\lceil\gamma\rceil$, and
$r\in(1,\gamma]$. Let $\widehat{\mathbf y}\in\mathcal Y_N$ satisfy
\begin{align}
A_\nu^{[N]}(\widehat{\mathbf y})
\le
r q_\nu(\widehat{\mathbf y})
\; ,
\qquad
0\le\nu<N
\; ,
\label{eq:certificate-primal-feasibility}
\end{align}
and let $\widehat{\boldsymbol\omega}\ge0$ be arbitrary. Then
\begin{align}
\operatorname{ARL}_N(\widehat{\mathbf y})
\le
\Gamma_N(r)
\le
\mathfrak D_N(\widehat{\boldsymbol\omega})
\; .
\label{eq:certificate-primal-dual-sandwich}
\end{align}
Consequently,
\begin{align}
\operatorname{ARL}_N(\widehat{\mathbf y})
\ge
\gamma
&\quad\Longrightarrow\quad
V_{\sf P}(\gamma)
\le
r
\; ,
\label{eq:certificate-upper-pollak}
\\
\mathfrak D_N(\widehat{\boldsymbol\omega})
+
\varepsilon_N(r)
<
\gamma
&\quad\Longrightarrow\quad
V_{\sf P}(\gamma)
>
r
\; .
\label{eq:certificate-lower-pollak}
\end{align}
In the first case, the primal guarantee can be converted into an
admissible randomized stopping rule $T$ satisfying
\begin{align}
\Einf[T]
= \gamma \qquad \mbox{and} \qquad J_{\sf P}(T)
\le r \; .
\label{eq:certificate-rule}
\end{align}
\end{proposition}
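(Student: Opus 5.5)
The argument is a sandwich built from earlier results. I will prove the two inequalities in \eqref{eq:certificate-primal-dual-sandwich} first, then derive the two implications, and finally construct the stopping rule.

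\emph{The sandwich.} Since $\widehat{\mathbf y}\in\mathcal Y_N$ satisfies every truncated constraint \eqref{eq:certificate-primal-feasibility}, it is feasible for the maximization \eqref{eq:finite-risk-constraints}. The left inequality $\operatorname{ARL}_N(\widehat{\mathbf y})\le\Gamma_N(r)$ follows directly from the definition of $\Gamma_N(r)$. The right inequality $\Gamma_N(r)\le\mathfrak D_N(\widehat{\boldsymbol\omega})$ is weak duality, recorded in \eqref{eq:dual-any-vector-upper-bound}. In more detail: for any feasible $\mathbf y$, the Lagrangian satisfies $\mathcal L_N(\mathbf y,\widehat{\boldsymbol\omega})\ge\operatorname{ARL}_N(\mathbf y)$ because each bracket $rq_\nu-A^{[N]}_\nu$ is nonnegative. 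Step~3 of the proof of Theorem~\ref{thm:finite-dual-structure} shows that $\sup_{\mathbf y}\mathcal L_N(\cdot,\widehat{\boldsymbol\omega})=\mathfrak D_N(\widehat{\boldsymbol\omega})$ for any $\widehat{\boldsymbol\omega}\ge0$. Only the Bellman upper bound \eqref{eq:app-dual-upper-D} is actually needed here, since $r>1$ and $N\ge2$.

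\emph{The two implications.} For \eqref{eq:certificate-lower-pollak}, combine the right side of the sandwich with Theorem~\ref{thm:finite-frontier-bounds}:
\[
\Gamma(r)\le\Gamma_N(r)+\varepsilon_N(r)\le\mathfrak D_N(\widehat{\boldsymbol\omega})+\varepsilon_N(r)<\gamma .
\]
Since $\Gamma$ is nondecreasing, $\Gamma(r')<\gamma$ for every $r'\le r$. Lemma~\ref{lem:crossing-achieved} says the crossing is attained, so $r_\gamma>r$. Theorem~\ref{thm:main} then gives $V_{\sf P}(\gamma)=r_\gamma>r$. For \eqref{eq:certificate-upper-pollak}, it suffices to build the rule in \eqref{eq:certificate-rule}, because $V_{\sf P}(\gamma)\le J_{\sf P}(T)\le r$. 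Alternatively, $\Gamma(r)\ge\Gamma_N(r)\ge\gamma$ gives $r_\gamma\le r$.

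\emph{The rule.} Extend $\widehat{\mathbf y}$ by zero for $n\ge N$. As in the first half of the proof of Theorem~\ref{thm:finite-frontier-bounds}, this gives an extended process in $\mathcal F_{\le r}$ with full ARL $\operatorname{ARL}_N(\widehat{\mathbf y})\ge\gamma$. Lemma~\ref{lemma:survival-completeness} realizes it as a randomized stopping time. If its survival is positive at all finite times, it is already admissible with risk at most $r$. Otherwise let $N'$ be its first zero-survival time. Because the ARL is at least $\gamma>1$, we have $q_1>0$, so $N'\ge2$. With $r>1$, Lemma~\ref{lem:tail-completion} then produces an admissible rule with the same ARL and $J_{\sf P}\le r$. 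Finally, Lemma~\ref{lem:arl-calibration} lowers the ARL to exactly $\gamma$ without increasing the risk bound.

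The only step requiring care is that weak duality must hold for an arbitrary, non-optimal $\widehat{\boldsymbol\omega}$. This is guaranteed by the telescoping Bellman inequality, which needs no minimization. Apart from that, the proof is bookkeeping of hypotheses: $r>1$, $N'\ge2$, and monotonicity of $\Gamma$.
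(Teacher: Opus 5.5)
Your proposal is correct and follows the paper's proof essentially step for step. The sandwich comes from feasibility plus duality, the lower implication from Theorem~\ref{thm:finite-frontier-bounds} combined with the exact crossing characterization, and the rule from zero-extension, terminal completion (Lemma~\ref{lem:tail-completion}) and ARL calibration (Lemma~\ref{lem:arl-calibration}). Your only deviations are refinements the paper leaves implicit: you use just the Bellman weak-duality bound rather than the strong-duality identity, you explicitly check that the first zero-survival time is at least $2$, and you note that attainment of the crossing upgrades $r_\gamma\ge r$ to $r_\gamma>r$.
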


\begin{proof}
Since $\widehat{\mathbf y}$ satisfies
\eqref{eq:certificate-primal-feasibility}, it is feasible for the
truncated frontier at risk level $r$. Therefore,
\begin{align}
\operatorname{ARL}_N(\widehat{\mathbf y})
\le
\Gamma_N(r)
\; .
\label{eq:app-cert-primal-lower}
\end{align}
On the other hand, Theorem~\ref{thm:finite-dual-structure} gives
\begin{align}
\Gamma_N(r)
=
\min_{\boldsymbol\omega\ge0}
\mathfrak D_N(\boldsymbol\omega)
\le
\mathfrak D_N(\widehat{\boldsymbol\omega})
\; ,
\label{eq:app-cert-dual-upper}
\end{align}
which proves \eqref{eq:certificate-primal-dual-sandwich}. Suppose first that
\begin{align}
\operatorname{ARL}_N(\widehat{\mathbf y})
\ge
\gamma
\; .
\end{align}
Extend $\widehat{\mathbf y}$ by zero beyond time $N-1$. The resulting
zero-extended process satisfies all fixed-risk inequalities at level $r$
and has ARL at least $\gamma$. Since $r>1$,
Lemma~\ref{lem:tail-completion} converts this zero-extended process into
an admissible randomized rule without changing its ARL or increasing its
Pollak risk. If the resulting ARL is strictly larger than $\gamma$,
Lemma~\ref{lem:arl-calibration} reduces it to exactly $\gamma$, again
without increasing its risk. Thus there exists an admissible $T$ such
that
\begin{align}
\Einf[T]
=
\gamma
\qquad \mbox{and} \qquad
J_{\sf P}(T)
\le
r
\; ,
\end{align}
which proves \eqref{eq:certificate-upper-pollak} and
\eqref{eq:certificate-rule}. Suppose next that
\begin{align}
\mathfrak D_N(\widehat{\boldsymbol\omega})
+
\varepsilon_N(r)
<
\gamma
\; .
\end{align}
Using Theorem~\ref{thm:finite-frontier-bounds} and
\eqref{eq:app-cert-dual-upper},
\begin{align}
\Gamma(r)
&\le
\Gamma_N(r)+\varepsilon_N(r) \le
\mathfrak D_N(\widehat{\boldsymbol\omega})
+
\varepsilon_N(r) <
\gamma
\; .
\end{align}
The exact crossing characterization in Theorem~\ref{thm:main} therefore
gives
\begin{align}
V_{\sf P}(\gamma)
>
r
\; ,
\end{align}
which proves \eqref{eq:certificate-lower-pollak}.
\end{proof}

\subsection{Two-sided Bounds}

\begin{corollary}[Two-sided bounds for the Pollak value]
\label{cor:certified-bracket}
Suppose that $1<r_L<r_U\le\gamma$ and that, for some
$N\ge\lceil\gamma\rceil$, there exist
$\boldsymbol\omega_L\ge0$ and $\mathbf y_U\in\mathcal Y_N$ such that
\begin{align}
\mathfrak D_N(\boldsymbol\omega_L)
+
\varepsilon_N(r_L)
&<
\gamma
\; ,
\label{eq:certified-bracket-lower}
\\
A_\nu^{[N]}(\mathbf y_U)
&\le
r_U q_\nu(\mathbf y_U)
\; ,
\qquad
0\le\nu<N
\; ,
\label{eq:certified-bracket-upper-feas}
\\
\operatorname{ARL}_N(\mathbf y_U)
&\ge
\gamma
\; .
\label{eq:certified-bracket-upper-arl}
\end{align}
Then
\begin{align}
r_L
<
V_{\sf P}(\gamma)
\le
r_U
\; .
\label{eq:certified-pollak-bracket}
\end{align}
Furthermore, $\mathbf y_U$ can be converted into an admissible stopping
rule $T_U$ satisfying
\begin{align}
\Einf[T_U]
=
\gamma
\qquad \mbox{and} \qquad
0
\le
J_{\sf P}(T_U)-V_{\sf P}(\gamma)
<
r_U-r_L
\; .
\label{eq:certified-rule-gap}
\end{align}
For every $\eta>0$, there exists a sufficiently large truncation depth for which
\begin{align}
r_U-r_L
<
\eta
\; .
\label{eq:certified-arbitrary-width}
\end{align}
\end{corollary}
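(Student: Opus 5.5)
The two-sided bound \eqref{eq:certified-pollak-bracket} follows by applying Proposition~\ref{prop:finite-certificates} twice, once at each endpoint. At $r=r_L\in(1,\gamma]$ with dual vector $\boldsymbol\omega_L$, hypothesis \eqref{eq:certified-bracket-lower} is exactly the premise of \eqref{eq:certificate-lower-pollak}, so $V_{\sf P}(\gamma)>r_L$. At $r=r_U\in(1,\gamma]$, the process $\mathbf y_U$ is truncated-feasible by \eqref{eq:certified-bracket-upper-feas}, and its truncated ARL is at least $\gamma$ by \eqref{eq:certified-bracket-upper-arl}. Hence \eqref{eq:certificate-upper-pollak} gives $V_{\sf P}(\gamma)\le r_U$, and \eqref{eq:certificate-rule} supplies an admissible $T_U$ with $\Einf[T_U]=\gamma$ and $J_{\sf P}(T_U)\le r_U$. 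That conversion is zero extension, then Lemma~\ref{lem:tail-completion} (valid because $r_U>1$), then Lemma~\ref{lem:arl-calibration}. Since $T_U$ is feasible for the original problem, $J_{\sf P}(T_U)\ge V_{\sf P}(\gamma)$. Combining with $V_{\sf P}(\gamma)>r_L$ gives $J_{\sf P}(T_U)-V_{\sf P}(\gamma)<r_U-r_L$, which is \eqref{eq:certified-rule-gap}.

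For the width claim \eqref{eq:certified-arbitrary-width}, I read the statement as saying that for every $\eta>0$ there exist some $N$ and some certificates $(r_L,\boldsymbol\omega_L)$, $(r_U,\mathbf y_U)$ meeting the hypotheses with $r_U-r_L<\eta$. I would build the two endpoints separately.

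\emph{Upper endpoint.} By Theorem~\ref{thm:finite-convergence}, $r_{\gamma,N}\downarrow V_{\sf P}(\gamma)$. Choose $N$ with $r_{\gamma,N}<V_{\sf P}(\gamma)+\eta/2$. Set $r_U=r_{\gamma,N}$ and let $\mathbf y_U$ be a truncated optimizer of $\Gamma_N(r_{\gamma,N})$, which exists by Lemma~\ref{lem:finite-crossing}. Then $\operatorname{ARL}_N(\mathbf y_U)=\Gamma_N(r_U)\ge\gamma$, and Lemma~\ref{lem:finite-crossing} also gives $1<r_U\le\gamma$.

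\emph{Lower endpoint.} Take $r_L=\max\{V_{\sf P}(\gamma)-\eta/2,\ (1+V_{\sf P}(\gamma))/2\}$. Lemma~\ref{lem:r-gamma-positive} gives $V_{\sf P}(\gamma)>1$, so $1<r_L<V_{\sf P}(\gamma)\le r_U$. Theorem~\ref{thm:main} gives $\Gamma(r_L)<\gamma$, since otherwise $r_\gamma\le r_L$. I still need a single $N$ satisfying $\Gamma_N(r_L)+\varepsilon_N(r_L)<\gamma$. By Theorem~\ref{thm:finite-frontier-bounds}, $\Gamma_N(r_L)\le\Gamma(r_L)$, and $\varepsilon_N(r_L)\to0$ as $N$ grows. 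So for all large $N$ we have $\Gamma_N(r_L)+\varepsilon_N(r_L)\le\Gamma(r_L)+\varepsilon_N(r_L)<\gamma$. Theorem~\ref{thm:finite-dual-structure} gives a minimizer $\boldsymbol\omega_L$ with $\mathfrak D_N(\boldsymbol\omega_L)=\Gamma_N(r_L)$, and this vector satisfies \eqref{eq:certified-bracket-lower}.

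Increasing $N$ keeps the upper certificate valid, because $r_{\gamma,N}$ is monotone and the zero extension of an optimizer remains feasible at larger depth. So I can take $N$ large enough for both endpoints simultaneously, giving $r_U-r_L<\eta$.

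The main obstacle is the lower certificate. It needs the strict inequality $\Gamma(r_L)<\gamma$ strictly below the crossing, which comes from the minimality of $r_\gamma$, and it needs exact attainment of the dual minimum rather than an approximate $\boldsymbol\omega$. The $\varepsilon_N$ term is harmless only because $r_L$ is fixed before $N$ is sent to infinity.
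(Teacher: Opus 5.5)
Your proposal is correct and follows essentially the same route as the paper. The bracket and the rule $T_U$ come from applying Proposition~\ref{prop:finite-certificates} at $r_L$ and at $r_U$. The width claim comes from fixing $r_L$ just below $V_{\sf P}(\gamma)$, so that $\Gamma(r_L)<\gamma$ and an exact dual minimizer gives the lower certificate for large $N$, and from using $r_{\gamma,N}\downarrow V_{\sf P}(\gamma)$ with a truncated primal optimizer for the upper one. The only minor difference is that you set $r_U=r_{\gamma,N}$ directly, which avoids the paper's separate treatment of the case $V_{\sf P}(\gamma)=\gamma$.
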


\begin{proof}
For the corollary,
\eqref{eq:certified-bracket-lower} and
\eqref{eq:certificate-lower-pollak} give
$r_L<V_{\sf P}(\gamma)$, whereas
\eqref{eq:certified-bracket-upper-feas}--%
\eqref{eq:certified-bracket-upper-arl} and
\eqref{eq:certificate-upper-pollak} give
$V_{\sf P}(\gamma)\le r_U$. The same primal construction produces
$T_U$ with $\Einf[T_U]=\gamma$ and $J_{\sf P}(T_U)\le r_U$. Hence
\begin{align}
0
\le
J_{\sf P}(T_U)-V_{\sf P}(\gamma)
<
r_U-r_L
\; ,
\end{align}
which proves \eqref{eq:certified-rule-gap}. It remains to prove that the bracket can be made arbitrarily narrow.
Fix $\eta>0$. Choose
\begin{align}
1
<
r_L
<
V_{\sf P}(\gamma)
\qquad\text{with}\qquad
V_{\sf P}(\gamma)-r_L
<
\frac{\eta}{2}
\; .
\end{align}
Then $\Gamma(r_L)<\gamma$. By
Theorem~\ref{thm:finite-frontier-bounds} and
$\varepsilon_N(r_L)\to0$, for all sufficiently large $N$,
\begin{align}
\Gamma_N(r_L)+\varepsilon_N(r_L)
<
\gamma
\; .
\end{align}
Taking an exact dual minimizer at $r_L$ gives the required lower
bound. If $V_{\sf P}(\gamma)<\gamma$, choose
\begin{align}
V_{\sf P}(\gamma)
<
r_U
\le
\gamma
\qquad\text{with}\qquad
r_U-V_{\sf P}(\gamma)
<
\frac{\eta}{2}
\; .
\end{align}
Theorem~\ref{thm:finite-convergence} gives
$r_{\gamma,N}\downarrow V_{\sf P}(\gamma)$, so for all sufficiently
large $N$, $\Gamma_N(r_U)\ge\gamma$, and a primal optimizer of the truncated problem gives the upper bound. If $V_{\sf P}(\gamma)=\gamma$,
take $r_U=\gamma$ and use Lemma~\ref{lem:finite-crossing}. Enlarging
$N$ if necessary so that the same truncation depth works for both bounds gives
\begin{align}
r_U-r_L
<
\eta
\; ,
\end{align}
which proves \eqref{eq:certified-arbitrary-width}.
\end{proof}

\subsection{Validated Numerical Evaluation}
\label{app:M3}
\begin{remark}[Validated numerical evaluation]
\label{rem:validated-numerics}
For finite discrete models, the quantities in
Proposition~\ref{prop:finite-certificates} reduce to finite sums. For
continuous models, ordinary floating-point quadrature gives numerical
approximations. A rigorous
implementation may use validated quadrature, interval arithmetic, or any
other method that returns one-sided enclosures. In particular, if
\begin{align}
\underline q_n
&\le
q_n(\widehat{\mathbf y})
\; ,
&
A_\nu^{[N]}(\widehat{\mathbf y})
&\le
\overline A_\nu
\; ,
&
\mathfrak D_N(\widehat{\boldsymbol\omega})
&\le
\overline{\mathfrak D}_N
\; ,
\end{align}
then
\begin{align}
\overline A_\nu
&\le
r\underline q_\nu
\; ,
\qquad
0\le\nu<N
\; ,
&
\sum_{n=0}^{N-1}\underline q_n
&\ge
\gamma
\label{eq:validated-primal-tests}
\end{align}
ensure $V_{\sf P}(\gamma)\le r$, whereas
\begin{align}
\overline{\mathfrak D}_N
+
\varepsilon_N(r)
<
\gamma
\label{eq:validated-dual-test}
\end{align}
ensures $V_{\sf P}(\gamma)>r$.
\end{remark}

\subsection{PVFA Procedure}
\label{app:M4}

Choose $N\ge\lceil\gamma\rceil$ and initialize
$r_L=1$ and $r_U=\gamma$. At a trial
$r=(r_L+r_U)/2$, compute a nonnegative dual candidate and a
primal candidate for the truncated problem from the Bellman policy. If the primal
candidate satisfies the fixed-risk constraints and has
$\operatorname{ARL}_N\ge\gamma$, replace $r_U$ by $r$ and retain that
candidate. If instead the dual value, together with the truncation term,
satisfies
$\mathfrak D_N(\widehat{\boldsymbol\omega})+\varepsilon_N(r)<\gamma$,
replace $r_L$ by $r$. If neither test is decisive, refine the numerical
enclosures or increase $N$. Iterating until $r_U-r_L\le\eta$ yields
\begin{align}
r_L
<
V_{\sf P}(\gamma)
\le
r_U
\qquad \mbox{and} \qquad
0
\le
J_{\sf P}(T_U)-V_{\sf P}(\gamma)
<
\eta
\; ,
\label{eq:certified-pvfa-output}
\end{align}
after applying the terminal completion of
Lemma~\ref{lem:tail-completion} and, when needed, the ARL calibration of
Lemma~\ref{lem:arl-calibration} to the retained primal rule.
Positive-probability boundary ties use the saddle-compatible continuation kernel of Theorem~\ref{thm:finite-dual-structure}; for continuous models, the one-sided evaluations in Remark~\ref{rem:validated-numerics} are used when a rigorous numerical guarantee is required.

\end{document}